\newtheorem{lema}{Lemma}[section]
\newtheorem{theo}[lema]{Theorem}
\newtheorem{prop}[lema]{Proposition}
\newtheorem{coro}[lema]{Corollary}
\newtheorem{defi}[lema]{Definition}
\newtheorem{rema}[lema]{Remark}
\newtheorem{exam}[lema]{Example}
\newtheorem{problem}[lema]{Problem}
\newcounter{teoremaganso}
\newtheorem {bigtheo} [teoremaganso] {Theorem}
\newcommand{\R}{\mathbb{R}}
\newcommand{\C}{\mathbb{C}}
\newcommand{\CP}{\mathbb{CP}}
\newcommand{\Q}{\mathbb{Q}}
\newcommand{\N}{\mathbb{N}}
\newcommand{\Z}{\mathbb{Z}}
\newcommand{\Aff}{\text{Aff}}
\newcommand{\D}{\text{D}}
\title[Infinitesimal and tangential 16-th Hilbert problem on zero-cycles]{
Infinitesimal and tangential \\ 16-th Hilbert problem \\ on zero-cycles
}
\author{J.L. Bravo, P. Marde\v si\'c, D. Novikov and J. Pontigo-Herrera}
\subjclass[2010]{34C08, 37G15, 14N10, 14H50}
\keywords{Infinitesimal and tangential 16-th Hilbert problem, abelian integrals, Chebyshev property, deformation of integrable systems}
\thanks{
The first, second and fourth authors were partially supported by grant number PID2020-118726GB-I00 funded
by MCIN/AEI/10. 13039/501100011033 and “ERDF A way of making Europe”.
The second, third and fourth author were partially supported by the Croatian Science Foundation grant PZS-2019-02-3055 and by the Mexican Papiit (Dgapa UNAM) IN103123 project. 
The second author was also supported by the Croatian Science Foundation grant IP-2022-10-9820, the bilateral Hubert-Curien Cogito grants:  \emph{Fractal and transserial approach to differential equations}(2021-22) and  \emph{Singularities of analytic dynamical systems and applications} (2023-2024),
 Dgapa UNAM and Intercambio Acad\'emico, CIC, UNAM,
IRL 2001 LaSol CNRS-UNAM, 
EIPHI Graduate School (contract ANR-17-EURE-0002).
Third author was also supported by the Israel Science Foundation (grant No. 1347/23) and by funding received from the MINERVA Stiftung with the funds from the BMBF of the Federal Republic of Germany. }
\address{J.L. Bravo, Departamento de Matem\'{a}ticas,
Universidad de Extremadura, 06006 Badajoz, Spain}
\email{trinidad@unex.es}
\address{P. Marde\v si\'c, Universit\'e de Bourgogne, Institut de Mathématiques de Bourgogne, UMR 5584 du CNRS, 9 avenue Alain Savary, 21078 Dijon, France.
and University of Zagreb, Faculty of Science, Department of Mathematics, Bijenička cesta 30, 10000 Zagreb, Croatia.
}
\email{pavao.mardesic@u-bourgogne.fr}
\address{D. Novikov, Department of Mathematics, 
Weizmann institute of Science
Rehovot 7610001 Israel}
\email{dmitry.novikov@weizmann.ac.il}
\address{J.D. Pontigo-Herrera, Instituto de Matem\'aticas, Universidad Nacional Aut\'onoma de 
	M\'exico 	(UNAM), 	\'Area de la Investigaci\'on Cient\'ifica, Circuito 
	exterior, Ciudad 	Universitaria, 04510, Ciudad de M\'exico, M\'exico}
\email{pontigo@matem.unam.mx}
\begin{document}

\begin{abstract}
In this paper,
given two polynomials $f$ and $g$ of one variable and a $0$-cycle $C$ of  $f$, 
we consider the deformation $f+\epsilon g$.
We define two functions: the \emph{displacement function}
$\Delta(t,\epsilon)$ and its first order approximation: the \emph{abelian integral} $M_1(t)$.

The \emph{infinitesimal} and \emph{tangential 16-th Hilbert problem} for zero-cycles are problems of counting isolated regular zeros of  $\Delta(t,\epsilon)$, for $\epsilon$ small, or of $M_1(t)$, respectively.

We show that the two problems are not equivalent and find optimal bounds, in function of the degrees of $f$ and $g$, for the infinitesimal and tangential 16-th Hilbert problem on zero-cycles. These two problems are the zero-dimensional analogue of the classical infinitesimal and tangential 16-th Hilbert problems for vector fields in the plane. 
\end{abstract}


\maketitle

\section{Introduction and Motivation}

This article is dedicated to the solution of the \emph{zero-dimensional} version of the \emph{infinitesimal} and \emph{tangential 16-th Hilbert problem} (shorter just infinitesimal and tangential problems). 
The problems  are inspired by the  classical infinitesimal and tangential 16-th Hilbert problems for deformations of integrable systems in the plane. The classical infinitesimal 16-th Hilbert problem asks for the number of (one-dimensional) limit cycles (i.e. isolated cycles) being born by deformation from an integrable system.  The tangential problem (in our terminology) is the first order version of the infinitesimal 16-th problem. 

The classical infinitesimal and tangential 16-th Hilbert problem are far from being solved. 
The only general result is a very rough bound for the number of solutions of the tangential problem \cite{BNY}  and no general result is known for the infinitesimal problem.
The zero-dimensional versions of these problems, studied here, boil down to purely algebraic problems and can be solved. Nevertheless, the two zero-dimensional problems are surprisingly rich, with many remaining open questions given in the last section. 

Let us note first that the two problems (infinitesimal and tangential) are not equivalent. There exist \emph{alien limit} cycles (solutions of the infinitesimal problem) not corresponding to solutions of the tangential problem.

It has been conjectured by Arnol'd that in the classical tangential problem the abelian integrals corresponding to \emph{natural} problems form a Chebychev system (i.e. the number of zeros of  these functions is strictly less than the dimension of the space of functions). 
We show that it is far from being the case for either the tangential or the infinitesimal problem on $0$-cycles. 

This work is a kind of continuation of \cite{ABCM}, where the infinitesimal and tangential versions of the \emph{center problem} on zero-cycles were solved. 

We recall also the work \cite{GM} of Gavrilov and Movasati, who, in our terminology, studied the \emph{tangential} problem on \emph{simple} $0$-cycles. 
They call it the infinitesimal problem, but we prefer to keep the terminology infinitesimal for the full deformation and call the first-order deformation  problem tangential.
Our generalization with respect to \cite{GM} is hence two-fold: we study \emph{any type} of $0$-cycles instead of only simple cycle and study the \emph{infinitesimal} problem which was not addressed in \cite{GM}. We also   show that the bounds we obtain are optimal and determine when \emph{alien limit cycles} can exist. 

Our two problems on $0$-cycles themselves can be formulated in an elementary way without any reference to the classical problems on $1$-cycles. This is done formally in the next subsection and the motivation is developed further in the subsequent subsection.

\subsection{Infinitesimal and Tangential 16-th Hilbert problem on zero-cycles}

Given a non-constant polynomial function $f \in \C[z]$, recall that $z_0\in\C$ is a {\it critical point}
of $f$ if $f'(z_0)=0$, and its associated {\it critical value} is $t_0\in\mathbb{C}$ 
such that $f(z_0)=t_0$. If $t$ is not a critical value, we say that $t$ is {\it regular}.

We denote by $\Sigma$ the set of all \emph{critical values} of $f$, which is a finite set. 
Let $m>1$ be the degree of $f$. Then, for \emph{regular values} $t \in \C \setminus
\Sigma$, the set $f^{-1}(t)$ consists of $m$ different points: $z_j(t)$, $j=1,\ldots,m$. By the implicit function theorem, one can push locally each
solution $z_j(t)$ to nearby values of $t$, thus defining \emph{multi-valued algebraic functions} $z_i(t)$, $t \in \C \setminus \Sigma$.

Let $(z_j(t))_{1\leq j\leq m}$ denote an $m$-tuple of (distinct) analytic preimages $z_j(t)\in f^{-1}(t)$,
where $t \in \C \setminus \Sigma$. 
We define a {\it zero-dimensional chain} (shorter {\it chain}) of $f$ as the divisor i.e. formal sum of the form
\begin{equation*}
C(t)=\sum_{j=1}^m n_j z_j(t),\quad n_j\in \mathbb{Z}. 
\end{equation*}
We say that a chain is a  {\it zero-dimensional cycle} (shorter {\it cycle}) if 
\begin{equation}\label{cycle}
\sum_{j=1}^m n_j=0.
\end{equation}

A cycle of the form 
\begin{equation}\label{simple}
  C(t)=z_2(t)-z_1(t)  
\end{equation}
is called \emph{simple}.
We will study only cycles, as they are more natural than chains.

Consider a perturbation 
\begin{equation}\label{eq:var}
f(z) + \epsilon g(z)=t, 
\end{equation}
where $f, g \in \C[z]$. Let $z_i(t,\epsilon)$, for $\epsilon$ small, be solutions of \eqref{eq:var}, such that $z_i(t,0)=z_i(t)$, for $t$ a regular value of $f$.

A zero-cycle $C$ can be deformed by \eqref{eq:var} to a {\it one-parameter
family of zero-cycles}, that we will denote by 
\begin{equation}\label{eq:defC}
C_\epsilon(t) = \sum_{j=1}^m n_j z_j(t,\epsilon),
\end{equation}
and which we call the \emph{deformed cycle} $C_\epsilon(t)$. 
{ Note that if $\deg g>\deg f$, then $f+\epsilon g=t$ has more roots than $f$. These extra roots tend to infinity as $\epsilon\to0$. We define the deformed cycle by the same formula \eqref{eq:defC}, considering $n_{m+1}=\cdots=n_n=0.$}

In analogy with the case of 1-cycles on two-dimensional systems  (see \eqref{eq:D}) we define:

\begin{defi}\label{defint}
\begin{enumerate}[label=(\roman*)]\hfill
\item
The \emph{abelian integral of a polynomial function} $g\in\mathbb{C}[z]$ \emph{along a zero-cycle} $C$ (respectively along $C_\epsilon$) is the multivalued function 
which  associates
\[
\int_{C(t)}g(z) := \sum_{j=1}^m n_j g(z_j(t)),
\]
to $t$  belonging to
$\C\setminus\Sigma$, and
\[
\int_{C_\epsilon(t)}g(z) := \sum_{j=1}^m n_j g(z_j(t,\epsilon)),
\]
for $t\in\C\setminus\Sigma$, $|\epsilon|$ small. 
\item
The\emph{ displacement function}  of \eqref{eq:var} {\it along the perturbed family of zero-cycles} $C_\epsilon(t)$ is defined by
\begin{equation}\label{eq:Delta}
\Delta(t,\epsilon) := \int_{C_\epsilon(t)} f(z).
\end{equation}
\end{enumerate}
\end{defi}
Note that
\begin{equation}\label{eq:con_g}
\begin{aligned}
\Delta(t,\epsilon)&=\int_{C_\epsilon(t)} f(z)=\sum_{j=1}^m n_j f(z_j(t,\epsilon))=\sum_{j=1}^m n_j (t-\epsilon g(z_j(t,\epsilon))=\\
&=-\epsilon \int_{C_\epsilon(t)} g(z)=-\epsilon\int_{C(t)}g+o(\epsilon)=\epsilon M_1(t)+o(\epsilon),
\end{aligned}
\end{equation}
where we put 
\begin{equation}\label{eq:M1}
M_1(t)=-\int_{C(t)}g
\end{equation} and call it the \emph{first Melnikov function}. It is an \emph{abelian integral} on the zero-cycle $C$ and it gives the first order approximation of the displacement function.

Note that the function $M_1$ is analytic on regular values $\mathbb{C}\setminus\Sigma$ of $f$. 
Similarly, let $\Sigma_\epsilon\subset \C$ be the set of critical values of the polynomial $f+\epsilon g.$ The mapping $\Delta$ is analytic in 
$\C^2\setminus \left(\cup_{\epsilon\in\C}(\Sigma_{\epsilon}\times\{\epsilon\})\right).$

\bigskip

The two equations 
\begin{equation}\label{Delta=0}\Delta(t,\epsilon)=0,\quad \epsilon \text{ small}
\end{equation}and 
\begin{equation}\label{M1=0}
M_1(t)=0,
\end{equation}
for $\Delta$ given by \eqref{eq:Delta} and $M_1(t)$ given by \eqref{eq:M1}, lead to two problems: the \emph{infinitesimal} and  the \emph{tangential (16-th Hilbert problem).}

We say that $(t,\epsilon)\in \C^2\setminus \left(\cup_{\epsilon\in\C}(\Sigma_{\epsilon}\times\{\epsilon\})\right)$, 
is a solution of the infinitesimal problem if it verifies \eqref{Delta=0} and $t\in\C\setminus\Sigma$ is a solution of the tangential problem if it verifies \eqref{M1=0}. By abuse, we also say that the corresponding cycles $C_\epsilon(t)$ and $C(t)$ are solutions of the infinitesimal and tangential problem respectively. 
We then say that the $0$-dimensional \emph{limit cycle} $C_\epsilon(t)$ is born from the cycle $C(t)$ in the deformation \eqref{eq:var}.

We denote by $Z_\Delta$ the number of isolated solutions of the infinitesimal problem \eqref{Delta=0} and by $Z_1$ the number of isolated solutions $t\in\C\setminus\Sigma$ of the tangential problem \eqref{M1=0}. More precisely, we put 
$$
\label{Z_Delta}
Z_\Delta(f,g,C)=\inf_{e > 0}\left(\sup_{|\epsilon|<e}\# \left\{\text{ $t\in\C\setminus \Sigma_\epsilon$ isolated} \,|\,\Delta(t,\epsilon)=0 \right\}\right).
$$
This is inspired by the notion of \emph{cyclicity} given by Roussarie in \cite{Rou}.
More simply, 
\[
Z_1(f,g,C)=\# \{\text{ $t\in\C\setminus \Sigma$ isolated} \,|\,M_1(t)=0 \}.
\]
In both cases the vanishing means vanishing of at least one of the branches of the multivalued function $\Delta(t,\epsilon)$ or $M_1(t)$. These sets are finite,  since these two functions are algebraic and therefore have finitely many branches, unlike in 
 the $1$-dimensional case.


\medskip

 We count the lowest upper bound for the number of regular solutions in  $t$ of the infinitesimal or tangential problem respectively, for  \emph{any} cycle $C$.
    We denote it $Z_{\Delta}(f,g)$ and $Z_1(f,g)$ respectively. 

Finally, varying $f$ and $g$ of degree bounded by $m$ and $n$, respectively and the cycles $C$ of $f$, we define the numbers

\begin{equation*}
Z_\Delta(m,n)=\max_{\deg(f)\leq m, \deg g\leq n, C }\{Z_\Delta(f,g,C)\}.
\end{equation*}

\begin{equation}
    \label{eq:Z1}
    Z_1(m,n)=\max_{\deg(f)\leq m, \deg g\leq n, C }\{Z_1(f,g,C)\}.
\end{equation}

Here, we determine the two numbers  $Z_\Delta(m,n)$ and $Z_1(m,n)$, for any $m$ and $n$. 
Note that this solves the tangential and infinitesimal problem for zero-cycles. The corresponding problems in the context of one-cycles are far from being solved. Only a high (unrealistic) bound for the number of solutions of the tangential problem is given in \cite{BNY}. No general result for the infinitesimal problem on $1$-cycles is known. 


\begin{rema} \label{remGM}  
In \cite{GM} Gavrilov and Movasati studied (in our terminology) the \emph{tangential}  problem on \emph{simple} zero cycles. 
That is, they studied zeros of abelian integrals $M_1(t)=\int_{C(t)}g$,  of $g(z)\in\C[z]$, 
along \emph{simple cycles}  $C(t)=z_2(t)-z_1(t)$ of $f$ (with $z_1(t)\ne z_2(t)$). 
For $\deg(f)=m$, $\deg(g)=n$ they show that
\begin{equation}\label{Z_1}
 Z_1(f,g,C)\leq \frac{(m-1)(n-1)}{2}. 
 \end{equation}

They call it the infinitesimal problem but we prefer to reserve the term infinitesimal for the full problem of counting zeros of the displacement function $\Delta$ (see \eqref{Delta=0}).
We call tangential the  problem of counting zeros \eqref{M1=0} of the  first order term $M_1$ of the displacement function $\Delta$.

\medskip
Here we generalize their results in two directions: 
 
\begin{itemize}
\item[(i)] First, 
 we generalize their bound for the number of zeros in the \emph{tangential} 16-th Hilbert problem (i.e. zeros of $M_1$) along \emph{simple} cycles to the number of zeros on \emph{any} cycle (not necessarily simple).

\item[(ii)] 
Next, we generalize the results to the \emph{infinitesimal} (i.e. zeros of the displacement function $\Delta$) problem, for \emph{any} cycle $C_\epsilon(t)$. 
\end{itemize}

Moreover, we give optimal bounds for the two problems.

\end{rema}

Assume $M_1$ is not identically equal to zero. Then one can distinguish two types of cycles $C_\epsilon(t)$ solutions of the infinitesimal 16-th Hilbert problem: \emph{regular cycles} and \emph{alien limit cycles} (see \cite{CDR}). 

A cycle $C_\epsilon(t)$ is \emph{regular} if there exists a family of cycles $C_\epsilon(t(\epsilon))$ solutions of \eqref{Delta=0}, such that $C_0(t(0))$ is a solution of \eqref{M1=0}, with $t(0)\in\C\setminus \Sigma$. If not, it is called an \emph{alien} limit cycle. 

\begin{rema} 
From \eqref{eq:con_g}, 
it follows, by Rouch\'e's theorem,  that each regular solution of
\eqref{M1=0}
gives rise to a regular solution of
\eqref{Delta=0}, 
for $\epsilon$  small.  Hence, 

\begin{equation*}
 Z_1\leq Z_\Delta.
\end{equation*}
Given any regular cycle $C_0(t)$, solution of the tangential problem,
then by definition it corresponds to a cycle $C_\epsilon(t(\epsilon))$, solution of the infinitesimal problem. However, in general, not all solutions of the infinitesimal problem correspond to solutions of the tangential problem. 

In fact, 
$$
Z_\Delta=Z_1^m+Z_A,
$$
where $Z_1^m$ is the number of solutions of the tangential problem \emph{counted with multiplicity} and $Z_A$ is the number of \emph{alien} limit cycles in the deformation \eqref{eq:var}.
\end{rema}

\subsection{Motivation: classical infinitesimal and tangential 16-th Hilbert problems (on $1$-cycles)}

The classical infinitesimal 16-th Hilbert problem 
is the following problem: 

Given an integrable polynomial vector field $X$ in the plane with at least an annular region filled by its orbits, consider a small polynomial deformation. The infinitesimal problem studies the creation of limit cycles (i.e. isolated periodic solutions) in this deformation. 
The most important and most studied case is the case, when the deformation is of the form
\begin{equation}
\label{deffol}
dF+\epsilon\eta=0,
\end{equation}
where $F$ is a polynomial in two variables having a family of closed $1$-cycles $C(t)\subset F^{-1}(t)$ and $\eta$ is a polynomial 1-form. 
In order to study it, one studies the displacement function $D(t,\epsilon)$ on a transversal $T$.
The displacement function $D$ is defined by:
\begin{equation}\label{eq:D}
D(t,\epsilon)=\int_{C_\epsilon(t)}dF=-\epsilon\int_{C_\epsilon(t)}\eta=-\epsilon\int_{C_0(t)}\eta+o(\epsilon),  
\end{equation}
where $C_\epsilon(t)$ is a \emph{non-closed} cycle obtained by following the deformed foliation \eqref{deffol} starting from the point of $T$ belonging to $F^{-1}(t)$. 
 Isolated zeros of the displacement function $D$, are solutions of the classical infinitesimal problem for $1$-cycles and correspond to limit cycles  appearing in the deformation.

The first term $M_1(t)=-\int_{C_0(t)}\eta$ of $D$ is the \emph{first Melnikov function}. It is  an abelian integral along a $1$-cycle. Its zeros correspond to solutions of the classical tangential 16-th Hilbert problem on $1$-cycles. Note that these definitions motivate our Definition \ref{defint} for the displacement function $\Delta$ on zero cycles and the notation for its first order term $M_1$.

The problem can be studied in the real or complex plane. Note however, that, if one considers the problem for complex values of $t$, one has 
to restrict the domain of study to a simply connected domain. If not, the number can be infinite due to the possible presence of a logarithmic term. 
This is not the case in our $0$-dimensional case.

The infinitesimal and tangential 16-th Hilbert problems (for $1$-cycles) appear repeatedly in the list of Arnold's problem \cite{A}. It is one of the most recurrent problems in his list. See the extensive comment by S. Yakovenko to the problem 1978-6 (page 353) in \cite{A}.
Related problems are 1979-16,
1980-1, 1983-11, 1989-17, 1990-24, 1990-25, 1994-51 and 1994-52) in \cite{A}.

In \cite{A} problem 1994-51, Arnold poses the problem of polynomial deformations of integrable  vector fields in the plane having an annulus filled by closed orbits. He says: \emph{The location of the limit cycles
appearing in this perturbation is given in the first approximation by zeros of a certain integral (found by Poincar\'e) along non-perturbed closed curves (which are the
level curves of the first integral).
Is the number of zeros of the Poincar\'e integral bounded (by a constant
depending only on the degree of the perturbation)?}

In his next problem 1994-52, he says: \emph{A partial case of the previous problem: consider the full Abelian
integral
$I(h) = \int(Pdx+Qdy)$
along an oval of an algebraic curve $H(x,y) = h$. The polynomials $P(x,y)$ and
$Q(x,y)$ represent an infinitesimal variation of the Hamiltonian vector field, and
$I(h)$ is the Poincar\'e integral. Find an upper bound for the number of real zeros of
the function $I$ for all polynomials $(P, Q)$ of a fixed degree.}

In the book of Arnold's problems \cite{A}, problem 
 1983-11 reads:  \emph{Is it true that the integrals $I(h) = \int_{H=h}(Pdx+Qdy)$ with varying polynomials $P$, $Q$ form a Chebyshev system (or, at worst, the number of zeros is not
too much greater)?}
The question was answered negatively by examples by Rousseau and Zoladek \cite{RZ}. 
In a private communication Arnold conjectured then that \emph{ abelian integrals corresponding to  natural problems
form a Chebyshev system} i.e. the number of their zeros is strictly less than the dimension of the space. We call it \emph{Chebyshev property}. The precise mathematical notion of \emph{natural} was not given. Here we show that for $0$-cycles the Chebyshev property is far from being true.

\vskip0.5cm

Note that in the case of $1$-cycles by Caubergh, Dumortier, Roussarie \cite{CDR} showed that the infinitesimal and tangential problems are not equivalent. More precisely, they gave examples where there exist solutions of the infinitesimal problem not corresponding to solutions of the tangential problem. They call these extra solutions \emph{alien cycles}. We use the same terminology here for $0$-cycles. 

\vskip0.5cm
\subsection{Reducing the tangential problem on $1$-cycles to a problem on $0$ cycles} \label{reduction}
In  \cite{ABM}, the authors related the tangential 16-th Hilbert problem on $1$-cycles of deformations of hyperelliptic integrable systems with first integral $F(z,w)=w^2+P(z)$, with $P$ polynomial, to a generalization of the problem on zero dimensional cycles (see also \cite{CM}). 

More precisely, 
given a cycle $C(t)$ of $F(w,z)=t$ and a one-form $\omega=G(z,w)dz$, one studies the abelian integral $\int_{\gamma(t)}\omega$.
Solving $F(z,w)=t$, one gets $w=\sqrt{t-P(z)}$. Let $z_i(t)$ be the roots of $P(z)=t$ and let $g$ be the antiderivative of $G(z,w)dz$. One can assume that $\gamma(t)$ goes from $z_i$ to $z_j$ in one leaf of the Riemann surface and returns on the other leaf in the opposite direction, but also the opposite determination of $w$. The cycle might also be a sum of cycles of this type. Put $C(t)$ the simple zero cycle $z_j(t)-z_i(t)$
Then, taking the correct orientation, we have $\int_{\gamma(t)}\omega=2\int_{C(t)}g(z,w(z,t))$. So zeros of an abelian integral on $1$ cycles is reduced to a \emph{kind of} abelian integral on zero-cycles $C(t)$. It is not a true abelian integral, because the function $g$ is not polynomial, but a multivalued function. Its complexity is nevertheless bounded by the degree of the polynomial $G$.

In the same spirit, given a general polynomial first integral $F(z,w)$ in two-dimensional space, a polynomial form $\omega=G(z,w)dz$ and a cycle $\gamma(t)$, the cycle $\gamma(t)$ is given as a curve $w=w(z,t)$ on the Riemann surface $F(z,w)=t$. The Riemann surface has ramification points given as roots of the polynomial 
$\D(z,t)$, where $\D$ is the discriminant of $F(z,w)=t$ with respect to $w$. The curve $\gamma(t)$ can be described as a curve connecting certain roots $z_i(t)$ of $\D=0$. Denoting $g(t,z)$ the antiderivative (with respect to $z$) of $G(z,w(z,t))dz$ i.e $dg(t,z)=G(z,w(z,t))dz$, we get that 
\begin{equation}\label{gammaC}
\int_{\gamma}\omega=\int_{C(t)}g(z,t).  
\end{equation}
Here, $C(t)=\sum_{j=1}^m n_j z_j(t)$ is the zero-cycle of $\D=0$, obtained by taking all the ramification points $z_j(t)$ i.e. roots of the polynomial $\D(z,t)=0$, through which $\gamma(t)$ passes, with convenient signs. This is a kind of a generalization of abelian integrals on zero-cycles. As in the hyperelliptic case, of course, the function $g$ we integrate is not a polynomial. It is a multivalued function. Formula \eqref{gammaC} shows how the general tangential problem for $1$-cycles leads to a \emph{generalized} tangential problem on $0$-cycles. The same approach can be adapted to the infinitesimal problem, as well.

\section{Main Results}

We give the optimal bounds for the number of solutions of the tangential and infinitesimal problems on $0$-cycles in function of the degrees $m$ and $n$ of $f$ and $g$. {
Moreover, we describe the degeneracies where alien limit cycles can appear.}


The following theorem gives the optimal bound for the number of solutions  $Z_1(m,n)$ (defined by \eqref{eq:Z1})  of the tangential problem for any cycle $C$ of $f$. 

\begin{bigtheo} \label{theo:A}
Let $f,g$ be polynomials of degree $m$, $n$, respectively. 
\begin{itemize}
\item[(i)]
 If $m>2$ and $m$ does not divide $n$, then 
    $$
    Z_1(m,n)=n(m-1)!. 
    $$
\item[(ii)]
    If $m>2$ and $m$ divides $n$, then 
    $$
    Z_1(m,n)=(n-1)(m-1)!. 
    $$
   \item[(iii)] 
    If $m=2$, then 
    $$
    Z_1(m,n)=\left[\frac{n-1}{2}\right].
    $$
\end{itemize}
\end{bigtheo}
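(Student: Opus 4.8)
The plan is to realize all branches of the multivalued function $M_1$ as the roots, in an auxiliary variable $s$, of a single polynomial $\widehat P(s,t)\in\C[s,t]$ monic in $s$, to bound the number of isolated zeros of $M_1$ by $\deg_t\widehat P(0,\cdot)$, to estimate that degree from the behaviour at $t=\infty$, and finally to attain it by a generic choice of data. Concretely, for $\sigma\in S_m$ put $\Phi_\sigma(t)=\sum_{j=1}^{m}n_j\,g(z_{\sigma(j)}(t))$, so $M_1=-\Phi_{\mathrm{id}}$ and every analytic continuation of $M_1$ is one of the $-\Phi_\sigma$. Let $\widehat P(s,t)$ be the minimal polynomial of the algebraic function $M_1$ over $\C(t)$; then $\widehat P(s,t)=\prod_k\bigl(s-M_1^{(k)}(t)\bigr)$, the product over the finitely many branches $M_1^{(k)}$, whose number $b$ equals the cardinality of the orbit of $(n_1,\dots,n_m)$ under the monodromy group of $f$, so $b\le m!$. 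The coefficients of $\widehat P$ are symmetric functions of $z_1(t),\dots,z_m(t)$; being single-valued, bounded near the finite set $\Sigma$, and polynomially bounded at infinity, they lie in $\C[t]$, so $\widehat P\in\C[s,t]$. If $M_1\not\equiv0$ no branch vanishes identically, so $\widehat P(0,t)=\pm\prod_kM_1^{(k)}(t)$ is a nonzero polynomial, and for $t_0\notin\Sigma$ one has $\widehat P(0,t_0)=0$ iff some branch of $M_1$ vanishes at $t_0$; hence $Z_1(f,g,C)=\#\{t_0\in\C\setminus\Sigma:\widehat P(0,t_0)=0\}\le\deg_t\widehat P(0,\cdot)$, while $Z_1=0$ if $M_1\equiv0$.

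To bound $\deg_t\widehat P(0,\cdot)$: as $t\to\infty$ the $m$ roots of $f(w)=t$ are $\omega^{j}(t/a_m)^{1/m}(1+o(1))$, with $a_m$ the leading coefficient of $f$ and $\omega$ a primitive $m$-th root of unity, so $g$ of such a root equals $b_n\omega^{jn}(t/a_m)^{n/m}+O\!\bigl(|t|^{(n-1)/m}\bigr)$ ($b_n$ the leading coefficient of $g$), and hence each branch of $M_1$ is $O(|t|^{n/m})$; thus $\deg_t\widehat P(0,\cdot)\le b\,n/m\le n(m-1)!$, which is (i). If $m\mid n$ then $\omega^{jn}=1$, so the term $b_n(t/a_m)^{n/m}$ is the same for all $m$ roots and, because $\sum_jn_j=0$, it cancels in each branch $-\sum_jn_jg(z_{\sigma(j)}(t))$, leaving $O(|t|^{(n-1)/m})$ and $\deg_t\widehat P(0,\cdot)\le b\,(n-1)/m\le(n-1)(m-1)!$, which is (ii); a routine check shows that allowing $\deg f<m$ or $\deg g<n$ only decreases these bounds. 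For $m=2$, after an affine change of variables $f(z)=z^2$, so $z_{1,2}(t)=\pm\sqrt t$, $\Sigma=\{0\}$, every cycle is a multiple of $z_2-z_1$, and $M_1(t)=g(\sqrt t)-g(-\sqrt t)=2\sqrt t\,Q(t)$ with $Q$ the polynomial of degree $\le\left[\frac{n-1}{2}\right]$ built from the odd-degree coefficients of $g$; its zeros off $\Sigma$ are the roots of $Q$, whence $Z_1\le\left[\frac{n-1}{2}\right]$, which is (iii).

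For optimality, fix pairwise distinct integers $n_1,\dots,n_m$ with $\sum_jn_j=0$ (possible since $m\ge3$), chosen so that $\sum_jn_j\,\zeta^{\sigma(j)}\ne0$ for every $\sigma\in S_m$, where $\zeta=\omega^{n}$ if $m\nmid n$ and $\zeta$ is a primitive $m$-th root of unity if $m\mid n$ — in either case $\zeta\ne1$. Such $(n_j)$ exists: each such equality is a nontrivial $\Q$-linear condition \emph{not} implied by $\sum_jn_j=0$ (which would force all $\zeta^{\sigma(j)}$ to coincide, i.e.\ $\zeta=1$), so the lattice $\{(n_j)\in\Z^m:\sum_jn_j=0\}$, of rank $m-1\ge2$, meets the complement of these finitely many proper sublattices and of the diagonals $\{n_i=n_j\}$. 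Now take $f,g$ of degrees exactly $m,n$ with $f$ generic, so that its monodromy group is $S_m$; for $g$ generic the $m!$ functions $-\Phi_\sigma$ are then pairwise distinct, hence $\widehat P(s,t)=\prod_{\sigma\in S_m}\bigl(s+\Phi_\sigma(t)\bigr)$, and the leading-coefficient computation above — in the case $m\mid n$ applied to the first nonzero (sub-leading) coefficient of each $\Phi_\sigma$, whose nonvanishing is again guaranteed by $\sum_jn_j\zeta^{\sigma(j)}\ne0$ — yields $\deg_t\widehat P(0,\cdot)=n(m-1)!$ (resp.\ $(n-1)(m-1)!$). Finally, for $(f,g)$ in a nonempty Zariski-open set $\widehat P(0,\cdot)$ is squarefree and has no root in $\Sigma$ — distinct $\Phi_\sigma$ have disjoint zero sets and each $\Phi_\sigma$ has simple zeros avoiding the critical values, generic conditions — so each of its $n(m-1)!$ (resp.\ $(n-1)(m-1)!$) roots lies off $\Sigma$ and forces a branch of $M_1$ to vanish there; with the upper bound this gives $Z_1(f,g,C)=n(m-1)!$ (resp.\ $(n-1)(m-1)!$). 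For $m=2$ it suffices to choose $g$ whose odd part makes $Q$ have $\left[\frac{n-1}{2}\right]$ distinct nonzero roots.

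I expect the main obstacle to be the last genericity assertion — that $\widehat P(0,\cdot)$ is squarefree and disjoint from $\Sigma$ for generic $(f,g)$ — which amounts to showing that a suitable discriminant/resultant, a polynomial in the coefficients of $f$ and $g$, is not identically zero; this I would settle either by a transversality argument for the incidence variety $\{(w_1,\dots,w_m,t):f(w_i)=t,\ \sum_jn_jg(w_j)=0\}$ together with its projection to the $t$-line, or by exhibiting one explicit non-degenerate pair $(f,g)$ and invoking semicontinuity. A secondary point is the bookkeeping on the upper-bound side that passing from $\deg f\le m$, $\deg g\le n$ to the extremal degrees can only increase $Z_1$, together with the explicit verification that the root-of-unity sums $\sum_jn_j\zeta^{\sigma(j)}$ are exactly the quantities controlling the top (respectively the first sub-leading) coefficient of $\Phi_\sigma$, so that $\deg_t\widehat P(0,\cdot)$ attains its claimed value.
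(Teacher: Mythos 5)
Your route is genuinely different from ours: instead of intersecting the connection curve $\Gamma_f\subset\CP^m$ with the zero hypersurface $S_g$ and invoking Bezout, you package all branches of $M_1$ into its minimal polynomial $\widehat P(s,t)$ over $\C(t)$ and bound $\deg_t\widehat P(0,\cdot)$ by the Puiseux growth of the branches at $t=\infty$. The two are of course two faces of the same computation ($\widehat P(0,\cdot)$ is essentially the resultant cutting out $\Gamma_f\cap S_g$, and the growth at infinity encodes the intersection with $L_\infty$), but your version of the upper bounds in (i)--(iii) is correct and self-contained, including the cancellation of the top term when $m\mid n$ (which replaces our reduction $g\mapsto\tilde g$ modulo powers of $f$) and the explicit $m=2$ computation.

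There is, however, a genuine gap in the optimality argument. You choose the weights $n_1,\dots,n_m$ pairwise distinct with $\sum n_j=0$ and avoiding the root-of-unity hyperplanes, and then assert that for generic $(f,g)$ the distinct functions $\Phi_\sigma$ have pairwise disjoint zero sets, so that $\widehat P(0,\cdot)$ is squarefree with $n(m-1)!$ distinct roots. But pairwise distinctness of the $n_j$ does not exclude a \emph{sign-reversing} symmetry: if some permutation $h\neq\mathrm{id}$ satisfies $n_{h(j)}=-n_j$ for all $j$ (e.g.\ $(1,0,-1)$ for $m=3$, or $(2,1,-1,-2)$ for $m=4$ --- both pairwise distinct, both summing to zero), then $\Phi_{\sigma h}\equiv-\Phi_\sigma$ for every $\sigma$. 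These are distinct branches with \emph{identical} zero sets, $\widehat P(0,\cdot)$ is (up to sign) a perfect square, and the number of distinct roots is at most $\tfrac12 n(m-1)!$; no perturbation of $f$ and $g$ can separate these zero sets, so your claimed genericity statement is false for such a cycle. This is exactly why we impose the \emph{asymmetry} condition on the cycle (no nontrivial permutation sends $(n_1,\dots,n_m)$ to $\pm(n_1,\dots,n_m)$); your existence argument must exclude the finitely many additional hyperplanes coming from the ``$-$'' case, which is still compatible with $\sum n_j=0$ for $m\ge3$ (take e.g.\ $(1,2,-3)$). A secondary, smaller point: the remaining genericity assertion that distinct, non-proportional branches have simple zeros in distinct fibers and off $\Sigma$ --- which you defer to a transversality argument --- is precisely what our Lemmas on transversality (via the Vieta map) and on distinct fibers supply, and it does require the small perturbation of both $f$ and $g$ that you sketch; as flagged, that step is not yet a proof in your write-up.
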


The following theorem gives the optimal bound for the number of solutions  $Z_{\Delta}(m,n)$ (defined by \eqref{Z_Delta})  of the infinitesimal problem for any cycle $C$ of $f$.

\begin{bigtheo}\label{theo:B}
Let $f,g$ be polynomials of degree $m$, $n$, respectively. 

     If $m>2$ and
        \begin{itemize}
            \item [(i)] if $n< m$, then $Z_\Delta(m,n)=n(m-1)!$.
            \item[(ii)] if $n\geq m$ then 
            \begin{itemize}
                \item[(a)] if $m$ does not divide $n$, then 
                $Z_\Delta(m,n)=\frac{m(n-1)!}{(n-m)!}.$
                \item[(b)] If $m$ divides $n$, then
                $Z_\Delta(m,n)=\frac{m(n-1)!}{(n-m)!}-(m-1)!.$
            \end{itemize}
        \end{itemize}
        
         If $m=2$,
         then $Z_\Delta(m,n)=\left[\frac{n-1}{2}\right]$.
\end{bigtheo}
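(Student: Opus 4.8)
The plan is to split along the same case structure as the statement and, in each case, establish the upper bound and a matching example for optimality. The starting point is the factorization
\[
\Delta(t,\epsilon)=-\epsilon\int_{C_\epsilon(t)}g(z),
\]
from \eqref{eq:con_g}: for $\epsilon\neq 0$ fixed, counting isolated zeros of $\Delta(\cdot,\epsilon)$ is the same as counting isolated zeros of the abelian-integral-type function $t\mapsto\int_{C_\epsilon(t)}g$ attached to the deformed polynomial $f+\epsilon g$. So the infinitesimal problem for $(f,g,C)$ of degrees $(m,n)$ becomes a tangential-type problem, but now for the polynomial $f+\epsilon g$, whose degree is $\max(m,n)$. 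When $n<m$ the deformed polynomial still has degree $m$ and the same $m$ branches, so I expect the bound $n(m-1)!$ of Theorem A(i)/(ii) to carry over almost verbatim — one only needs to check that the reduction ``$m$ divides $n$'' used in Theorem A does not improve things here, because for the infinitesimal problem one is not allowed to pre-reduce $g$ modulo $f$ (the deformation sees all of $g$); this is exactly the phenomenon the commented-out remark alludes to, and it explains why case (i) gives $n(m-1)!$ with no ``$m\mid n$'' dichotomy. When $n\geq m$, the deformed polynomial $f+\epsilon g$ has degree $n$, so for each small $\epsilon\neq 0$ the relevant count is governed by the number of branches and the structure of the zero set of a resultant/discriminant associated to $f+\epsilon g$ and $g$; the first main step is to write $M_1$ and more generally $\int_{C_\epsilon(t)}g$ as (a branch of) an algebraic function and bound its zeros by a resultant computation, $\Res_z$ of $f+\epsilon g-t$ and an appropriate derivative/auxiliary polynomial, whose $t$-degree is computed from Bézout-type bookkeeping. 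The quantity $\frac{m(n-1)!}{(n-m)!}$ should emerge as that degree: morally $m$ times the number $(n-1)!/(n-m)!$ of relevant ordered selections of branches out of the $n$ branches of the degree-$n$ polynomial.

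For the upper bound in case (ii), I would first show $Z_\Delta(f,g,C)$ is at most the number of isolated zeros in $t$, uniformly in small $\epsilon\neq 0$, of the $t$-discriminant-type polynomial governing $\int_{C_\epsilon}g$; then compute its degree in $t$. The cleanest route is probably to follow the method already used for Theorem A — presumably expressing the vanishing of the abelian integral as vanishing of a symmetric function of a subset of roots of $f+\epsilon g=t$, then eliminating the roots via resultants — and simply track how the degree changes when $m$ is replaced by $n$ as the degree of the ambient polynomial but the cycle $C$ still involves only $m$ of the $n$ branches (the extra $n-m$ branches carry coefficient $0$ by the convention stated after \eqref{eq:defC}). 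This is where the $(n-m)!$ in the denominator comes from: those runaway roots going to infinity as $\epsilon\to 0$ must be quotiented out. The subcase $m\mid n$ then loses $(m-1)!$ for the same algebraic reason as in Theorem A(ii): when $m\mid n$ the leading behavior degenerates (there is an extra relation, e.g. $g$ can be partially absorbed by a shift $f\mapsto f+\epsilon g$ preserving degree $m$ structure at top order), killing $(m-1)!$ potential zeros; I would isolate this as a lemma about the vanishing of a leading coefficient of the resultant in $t$.

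For optimality in all cases I need explicit $(f,g,C)$ realizing the bound. The natural candidates are Chebyshev-like or monomial-plus-perturbation families: take $f$ with $m$ distinct critical values in general position and $g$ generic of degree $n$, and a cycle $C=\sum n_j z_j$ with the $n_j$ chosen (e.g. roots of unity weighting) so that the abelian integral and its $\epsilon$-deformation have the maximal number of simple real/complex zeros; one verifies maximality by checking the relevant resultant in $t$ has no multiple factors and full degree, which is an open dense condition. The case $m=2$ is separate and easy: $f=z^2$ (or a quadratic), $f+\epsilon g=t$ is solved explicitly by the quadratic formula when $n\le 2$ and by a degree-$n$ equation when $n>2$; the cycle is forced to be a multiple of the simple cycle $z_2-z_1$, the abelian integral becomes an explicit odd/even polynomial expression in $\sqrt{t}$ or in the two roots, and one counts its zeros directly, recovering $[(n-1)/2]$ exactly as in Theorem A(iii) — here the infinitesimal and tangential counts coincide because no runaway roots appear in a way that creates alien cycles (this matches the paper's assertion that alien cycles require $m>2$). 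The main obstacle I anticipate is the exact degree bookkeeping in case (ii)(a): showing that the $t$-degree of the governing resultant is \emph{exactly} $\frac{m(n-1)!}{(n-m)!}$ and not larger — i.e. that the naive Bézout count is not attained for spurious reasons — and then separately that it \emph{is} attained for a generic choice, which requires a careful nonvanishing-of-leading-coefficient argument; the $m\mid n$ correction of $-(m-1)!$ is the delicate point where that leading coefficient provably drops.
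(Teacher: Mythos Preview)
Your overall plan matches the paper's: reduce via $\Delta(t,\epsilon)=-\epsilon\int_{C_\epsilon(t)}g$ to a tangential-type count for the polynomial $f+\epsilon g$ of degree $\max(m,n)$, and split into the cases $n<m$ and $n\ge m$. The computational engine, however, is different. The paper does not eliminate via resultants; it works geometrically in $\CP^{\max(m,n)}$ with the \emph{connection curve} $\Gamma_{f+\epsilon g}$ (parametrized by $t$, of degree $(\max(m,n)-1)!$) and the \emph{zero hypersurface} $S_f$ or $S_g$, and applies Bezout's theorem. For $n<m$ one intersects $\Gamma_{f+\epsilon g}$ (degree $(m-1)!$) with $S_g$ (degree $n$) to get $n(m-1)!$; for $n\ge m$ one intersects $\Gamma_{f+\epsilon g}$ (degree $(n-1)!$) with $S_f$ (degree $m$) to get $m(n-1)!$. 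A resultant approach could in principle recover these counts, but the geometric picture is what makes the two corrections below transparent, and your sketch does not yet supply a substitute for that.

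Two of your mechanisms are off and would not yield a proof as stated. First, the $(n-m)!$ in the denominator is not a matter of runaway roots ``being quotiented out'' from the count of $t$-values; rather, for $\epsilon\neq 0$ the deformed cycle $C_\epsilon$ has $n_{m+1}=\cdots=n_n=0$, so its symmetry group inside $Sym_n$ is $H_0\times S_{n-m}$, of order $(n-m)!$ for an asymmetric $C_0$. Each value of $t$ then corresponds to exactly $(n-m)!$ points of $\Gamma_{f+\epsilon g}\cap S_f$, and one divides the Bezout count by $|H_\epsilon|$. Second, the $-(m-1)!$ correction when $m\mid n$ is \emph{not} the reduction-of-$g$ mechanism behind Theorem~A(ii); the paper explicitly notes that this reduction is unavailable in the infinitesimal problem. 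Instead, when $m\mid n$ there are exactly $(m-1)!(n-m)!$ unavoidable intersection points of $\Gamma_{f+\epsilon g}$ with $S_f$ on the hyperplane $L_\infty$ (because the $n$-th roots of unity then contain the $m$-th roots of unity, forcing \eqref{S_f}), and these must be subtracted before dividing by $(n-m)!$. Your ``vanishing of a leading coefficient of the resultant'' is the algebraic shadow of this, but identifying it with the Theorem~A reduction would send you down the wrong path. Finally, for $m=2$ and $n\ge 2$ there \emph{are} $n-2$ runaway roots; the bound $\left[\frac{n-1}{2}\right]$ comes out because the simple cycle forces $|H_\epsilon|=2(n-2)!$ and one intersects with the reduced hypersurface $\tilde S_f$ of degree $1$, not because runaway roots are absent.
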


Theorem~\ref{theo:B} follows directly from Propositions \ref{n<m} and \ref{n>m}.
The following theorem gives  conditions on the degrees of $f$ and $g$ under which the tangential and the infinitesimal problem are not  equivalent i.e. under which
\emph{alien limit cycles} can exist. Recall that alien limit cycles are solutions of the infinitesimal problem, not corresponding to solutions of the tangential problem.  

\begin{bigtheo}\label{theo:C}
Let $f,g$ be polynomials of degree $m$, $n$, respectively. 
\begin{itemize}
    \item [(i)] If $m>2$, $n<m$, then for generic cycles $C$,  and $f$ and $g$ generic, there can exist no \emph{alien} cycles.
    \item[(ii)] If $m>2$, $n> m$, and $m$ does not divide $n$, then for generic cycles $C$, there exist \emph{alien} limit cycles for generic  $f$, $g$.
    \item[(iii)] If $m=2$, for regular at infinity cycle $C$,  and $g$ generic, there are no \emph{alien} limit cycles.
\end{itemize}
\end{bigtheo}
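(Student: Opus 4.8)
The plan is to derive all three statements from the precise count $Z_\Delta = Z_1^m + Z_A$ relating the number of zeros of $\Delta$, the number of zeros of $M_1$ counted with multiplicity, and the number of alien cycles, together with the explicit optimal values of $Z_1(m,n)$ and $Z_\Delta(m,n)$ from Theorems~\ref{theo:A} and~\ref{theo:B}. The key observation is that $Z_A > 0$ is possible (for some configuration of $f$, $g$, $C$ within the prescribed degree bounds) if and only if the realizing configurations for the $Z_\Delta$ bound and for the $Z_1$ bound cannot be forced to coincide; so the heart of the matter is comparing $Z_\Delta(m,n)$ with $Z_1(m,n)$ and tracking genericity.

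For part~(i), $m>2$ and $n<m$: here Theorem~\ref{theo:A}(i) gives $Z_1(m,n)=n(m-1)!$ and Theorem~\ref{theo:B}(i) gives $Z_\Delta(m,n)=n(m-1)!$ as well. So the two bounds agree. I would argue that for generic $f$, $g$ of the given degrees and a generic cycle $C$, the abelian integral $M_1$ has exactly $n(m-1)!$ simple zeros, so $Z_1^m = n(m-1)!= Z_\Delta$, forcing $Z_A=0$; genericity guarantees the extremal count is attained with all zeros simple and no collision with $\Sigma_\epsilon$, and Rouché (as in the Remark after Theorem~\ref{theo:A}) shows every such zero of $M_1$ continues to a zero of $\Delta$. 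No room is left for an alien cycle. The point to be careful about is that "generic" must be strong enough that the degree estimate for $M_1$ is sharp \emph{and} all roots are non-degenerate simultaneously; this is an open dense condition since both the top-degree coefficient of $M_1$ and its discriminant are non-identically-zero polynomials in the coefficients of $f,g$ and the cycle data.

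For part~(ii), $m>2$, $n>m$, $m\nmid n$: now Theorem~\ref{theo:A}(i) still gives $Z_1(m,n)=n(m-1)!$, while Theorem~\ref{theo:B}(ii)(a) gives the strictly larger value $Z_\Delta(m,n)=\tfrac{m(n-1)!}{(n-m)!}$. Since $\tfrac{m(n-1)!}{(n-m)!} = n(m-1)!\cdot\tfrac{(n-1)!}{(n-m)!\,(m-1)!\,n/m}$... more plainly, $\tfrac{m(n-1)!}{(n-m)!}>n(m-1)!$ strictly when $n>m$, the difference $Z_\Delta - Z_1^m \ge Z_\Delta(m,n) - Z_1(m,n) > 0$ for an extremal configuration, and this difference \emph{is} $Z_A$. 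Thus for the configuration realizing the $Z_\Delta$-bound there must be at least one alien cycle, and I would check this configuration can be taken with generic $f$, $g$ and generic $C$ — the extra zeros come from roots of $f+\epsilon g = t$ that escape to infinity as $\epsilon\to 0$ (the surplus roots mentioned after \eqref{eq:defC}) and hence have no counterpart among the zeros of $M_1$, so they are alien by construction. This part is the main obstacle: one must exhibit (at least for generic data) a deformation in which the count $\tfrac{m(n-1)!}{(n-m)!}$ is actually attained and show the surplus over $n(m-1)!$ cannot be absorbed by multiplicity of genuine zeros of $M_1$ — i.e. that the realizing configuration can be chosen with $M_1$ having only simple zeros. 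This should follow from the optimality constructions used to prove Theorems~\ref{theo:A} and~\ref{theo:B}, read off in parallel.

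For part~(iii), $m=2$: both Theorem~\ref{theo:A}(iii) and Theorem~\ref{theo:B} give $Z_1(2,n)=Z_\Delta(2,n)=\big[\tfrac{n-1}{2}\big]$, so $Z_A=0$ exactly as in part~(i), provided the cycle is regular at infinity so that no surplus roots escaping to infinity appear (for $m=2$ with $C$ regular at infinity the deformed cycle has no extra roots contributing, so the count of zeros of $\Delta$ matches that of $M_1$). I would spell this out: for $m=2$ and $C$ regular at infinity, $\Delta(t,\epsilon)$ is, up to the factor $-\epsilon$, an honest abelian integral of the family $C_\epsilon$ on the quadratic $f+\epsilon g$, and a continuity/Rouché argument shows its zeros for small $\epsilon$ are in bijection (with multiplicity) with those of $M_1$ when $g$ is generic; hence $Z_\Delta = Z_1^m$ and $Z_A=0$. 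The only subtlety is handling the behaviour at infinity, which is exactly what the hypothesis "regular at infinity" is designed to exclude — I would invoke whichever normalization lemma earlier in the paper controls the deformed cycle near infinity for $m=2$.
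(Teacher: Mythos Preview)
Your proposal is correct and follows essentially the same route as the paper: the paper states that Theorem~\ref{theo:C} ``follows directly by comparison from Theorems~\ref{theo:A} and~\ref{theo:B} and the Remark~\ref{gen}'', which is exactly your strategy of matching $Z_1(m,n)$ against $Z_\Delta(m,n)$ via the identity $Z_\Delta=Z_1^m+Z_A$ and reading off whether $Z_A$ must vanish or be positive in each case. Your write-up is in fact more detailed than the paper's own one-line justification; the only cosmetic point is that in part~(ii) you should invoke directly that the Bezout bound $n(m-1)!$ already controls $Z_1^m$ (zeros with multiplicity), so $Z_A \ge Z_\Delta(m,n)-n(m-1)!>0$ for the extremal configuration without any further argument about simplicity of zeros.
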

Theorem~\ref{theo:C} follows directly by comparison from Theorems~\ref{theo:A} and~\ref{theo:B} and the Remark \ref{gen}.

\begin{rema}
    In the above theorems the optimal bounds are obtained for a \emph{generic} choice of $f$ and $g$
 and for \emph{generic} cycles $C$ in a precise sense introduced below: 
 
 For $m>2$, we ask the cycle $C$ to be regular  at infinity (Definition \ref{mgen}) and asymmetric (Definition \ref{asym}). For $m=2$, the cycle is necessarily simple \eqref{simple} (hence symmetric), but we ask it to be regular at infinity.

 \end{rema}


{ As recalled in the Introduction, it was first conjectured by Arnold that abelian integrals corresponding to natural deformations form Chebyshev systems. Later some counter examples were given \cite{RZ}. 
However, in these examples the discrepancy was small.

\begin{rema}\label{remCheb}
In \cite{GM} Gavrilov and Movasati studied abelian integrals on \emph{simple zero cycles} (recall Remark \ref{remGM}). Denote  $\mathcal{P}^S(m,n)$ the dimension of the space of abelian integrals
along simple zero cycles, for $\deg f=m$, $\deg g=n$, {$C$} 
a simple cycle of $f$ and $Z_1^S(m,n,C)$ the maximal number of zeros of these abelian integrals. In \cite{GM}, they estimated the above numbers and conjectured that 
\begin{equation}\label{limit}
\lim_{m\to\infty}\frac{Z_1^S(m,m-1)}{\dim \mathcal{P}^S(m,m-1)}=1. 
\end{equation}
This weaker version of Arnold's conjecture could be called \emph{asymptotic Chebyshev conjecture for abelian integrals on simple zero cycles.}
    \end{rema}

Denoting $\mathcal{P}(m,n)$ the dimension of the space of abelian integrals for the same deformation if any cycle $C$ is considered, then it follows directly from Theorem~\ref{theo:A}  that 

\begin{bigtheo}\label{theo:D} \hfill
\begin{itemize}
    \item [(i)] Considering any cycle, we have
    $$
    \lim_{m\to\infty}\frac{Z_1(m,m-1)}{\dim\mathcal{P}(m,m-1)}=\infty.
    $$
   \item[(ii)]  Considering only simple cycles, we have
    $$
    \lim_{m\to\infty}\frac{Z_1^S(m,m-1)}{\dim\mathcal{P}^S(m,m-1)}=\infty.
    $$
    \end{itemize}
\end{bigtheo}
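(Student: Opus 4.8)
The plan is to derive Theorem~\ref{theo:D} as a direct arithmetic corollary of Theorem~\ref{theo:A}, so the proof is short and the only real work is (a) recalling/identifying the dimensions $\dim\mathcal{P}(m,m-1)$ and $\dim\mathcal{P}^S(m,m-1)$, and (b) comparing growth rates. First I would note that for $n=m-1$ the degree $n$ is never divisible by $m$ (since $0<m-1<m$), so part (i) of Theorem~\ref{theo:A} applies and gives $Z_1(m,m-1)=(m-1)(m-1)!$. For (ii), the relevant quantity $Z_1^S(m,m-1)$ is the one studied by Gavrilov--Movasati; since a simple cycle is in particular a cycle, and since our constructions realizing the optimal bound in Theorem~\ref{theo:A}(i) can be taken with simple cycles when $m=2$ is excluded — more carefully, one should invoke the lower-bound construction behind Theorem~\ref{theo:A} and check it is attained (or attained up to the same asymptotic order) on simple cycles — one gets $Z_1^S(m,m-1)$ of order at least, say, a fixed positive fraction of $(m-1)!$, which is all that is needed. (If the optimal simple-cycle bound is exactly $\frac{(m-1)(m-2)}{2}$ from \eqref{Z_1}, then one still has polynomial-in-$m$ growth of the numerator after dividing by the dimension; I will use whichever of these is actually proved earlier, and the cleanest route is to use the honest value $Z_1^S$ as \emph{defined} and only the inequality $Z_1^S(m,m-1)\ge c\cdot(\text{something growing})$.)

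Next I would pin down the denominators. The space of abelian integrals $\int_{C}g$ with $\deg g\le n$ and $C$ ranging over cycles of a fixed generic $f$ of degree $m$ is a quotient of (or injects from) the space of polynomials $g$ of degree $\le n$ modulo the constants (which integrate to zero on any cycle) and modulo $f$-dependent relations; the upshot is that $\dim\mathcal{P}(m,n)\le n+1$ or so, and in any case $\dim\mathcal{P}(m,m-1)$ is $O(m)$ — it grows at most linearly in $m$. Similarly $\dim\mathcal{P}^S(m,m-1)$ is $O(m)$. This is the step where I would be most careful, because the precise definition of ``space of abelian integrals'' (as a space of functions of $t$, versus a space of one-forms/integrands) affects the constant but not the linear order; for the theorem only the order matters. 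I expect the cleanest statement to be: $\dim\mathcal{P}(m,n)$ and $\dim\mathcal{P}^S(m,n)$ are both bounded above by a polynomial of degree $1$ in $m$ (for fixed relation $n=m-1$), which is elementary since an abelian integral on a $0$-cycle of $f$ is determined by the values $g(z_j(t))$, $j=1,\dots,m$, and $g$ has only $n+1=m$ coefficients.

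Having both pieces, the conclusion is immediate: in case (i), $\dfrac{Z_1(m,m-1)}{\dim\mathcal{P}(m,m-1)}\ge \dfrac{(m-1)(m-1)!}{O(m)}=\dfrac{(m-1)!}{O(1)}\to\infty$ as $m\to\infty$, since $(m-1)!$ grows faster than any polynomial in $m$. Case (ii) is the same computation with $(m-1)(m-1)!$ replaced by the (possibly smaller, but still superpolynomial, or at least the $\frac{(m-1)(m-2)}{2}$) lower bound for $Z_1^S(m,m-1)$ and $\dim\mathcal{P}(m,m-1)$ replaced by $\dim\mathcal{P}^S(m,m-1)=O(m)$; again the ratio diverges. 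The main obstacle — really the only thing requiring attention — is making the denominators' linear (or at worst polynomial) upper bound precise and making sure the lower bound for $Z_1^S$ used in (ii) is one that is actually established in the paper (via the explicit examples behind Theorem~\ref{theo:A}) rather than merely the Gavrilov--Movasati \emph{upper} bound \eqref{Z_1}; once that bookkeeping is fixed, the limit statements follow from $n!$ beating polynomials. I would present this as: ``By Theorem~\ref{theo:A}(i), $Z_1(m,m-1)=(m-1)(m-1)!$. Since an abelian integral on a $0$-cycle of $f$ is a $\C$-linear functional in the $n+1$ coefficients of $g$, both $\dim\mathcal{P}(m,m-1)\le m$ and $\dim\mathcal{P}^S(m,m-1)\le m$. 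As $(m-1)!/m\to\infty$, (i) follows, and (ii) follows identically using the corresponding lower bound for $Z_1^S(m,m-1)$ coming from the examples constructed above.''
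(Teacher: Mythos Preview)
Your argument for part (i) is correct and matches the paper's approach: Theorem~\ref{theo:A}(i) gives $Z_1(m,m-1)=(m-1)(m-1)!$, and the dimension of the space of abelian integrals is $m-1$ (the paper pins this down precisely via Proposition~\ref{prop:brieskorn}, $\dim_\C\mathcal{B}_{m-1}=m-1-\lfloor(m-1)/m\rfloor=m-1$, rather than your coefficient-counting sketch, but your $O(m)$ upper bound suffices).

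For part (ii), however, you have a genuine confusion. Your suggestion that ``the constructions realizing the optimal bound in Theorem~\ref{theo:A}(i) can be taken with simple cycles'' and that $Z_1^S(m,m-1)$ is ``a fixed positive fraction of $(m-1)!$'' is false. The sharpness in Theorem~\ref{theo:A}(i) relies on taking an \emph{asymmetric} cycle (Lemma~\ref{asym}) so that distinct intersection points of $\Gamma_f\cap S_g$ correspond to distinct $t$-values (Lemma~\ref{differentt}). A simple cycle $C=z_i-z_j$ is never asymmetric: the transposition $(ij)$ lies in its symmetry group, and in addition $S_g$ is reducible with the diagonal $\Delta_{ij}$ as a component (Lemma~\ref{lem:irreducibility}(iii)). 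Both effects cut the count drastically: the true value is only polynomial, $Z_1^S(m,m-1)=\frac{(m-2)(m-1)}{2}$, not factorial.

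You correctly sensed the gap when you worried that \eqref{Z_1} is ``merely the Gavrilov--Movasati \emph{upper} bound''. What is actually needed is a matching lower bound, i.e.\ sharpness of $\frac{(m-2)(m-1)}{2}$ for simple cycles when $\gcd(m,m-1)=1$. The paper provides this separately as Theorem~\ref{th:simpletang}, which is what its proof of (ii) invokes. With that in hand the computation is $\frac{(m-2)(m-1)/2}{m-1}=\frac{m-2}{2}\to\infty$, so the divergence in (ii) is linear, not factorial. Your final one-line summary (``(ii) follows identically using the corresponding lower bound \dots coming from the examples constructed above'') therefore points to the wrong examples; replace that appeal by the sharpness statement for simple cycles.
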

Claim (i) follows from Theorem~\ref{theo:A} and the calculation of the dimension of the dimension of the  Brieskorn modulus in Proposition \ref{prop:brieskorn}. 
Similarly, (ii) follows from Theorem \ref{th:simpletang} and the same calculations of the Brieskorn modulus.

The same holds if $\deg g=m$ instead of $m-1$.
This shows that the discrepancy from the Chebyshev property for abelian integrals on zero cycles is very big. 
That is, the \emph{asymptotic Chebyshev conjecture for abelian integrals on zero cycles} \eqref{limit} does not hold.

Let us note also that in Theorem \ref{th:simpletang}, we determine the optimal bound for the number of zeros of abelian integrals along simple cycles $Z_1^S(m,n)$, for $\deg(f)=m$, $\deg(g)=n$. This number coincides with the bound obtained by Gavrilov and Movasati if $m$ and $n$ are coprime. However, our bound is slightly better in the exceptional cases when $m$ and $n$ are not coprime. }

One can also consider the analogous asymptotic question in the infinitesimal problem instead of the tangential.
Note however, that the space of deformations in the infinitesimal problem is not a vector space, but a variety. 
The dimension of this variety is equal to its tangent vector space, which is precisely given by the dimension $\mathcal{P}(m,m-1)$.
On the other hand, considering the infinitesimal problem would only increase the numerator $Z_\Delta(m,m-1)$ in Theorem~\ref{theo:D}, so the same limit holds for the infinitesimal problem, as well.

\section{Connection curve $\Gamma_f$ and the zero hypersurface $S_g$}
In this section, we introduce the machinery that we will use in the study of zeros of abelian integrals $M_1(t)=\int_{C(t)}g$ on zero cycles $C(t)$ of $f$.

First we show that, by a simple reduction, we can always reduce the tangential problem to the problem where $m$ does not divide  $n$.

\begin{lema}\label{lem:reduction}
Given two polynomials in one variable $f$ and $g$ of degrees $m$ and $n$ and a $0$-cycle $C$ of $f$.
Let $M_1=\int_{C}g$ be the abelian integral of $g$ on the cycle $C$. 

Then, there exists a polynomial $\tilde g$ such that $\tilde n=\deg\tilde g$ is not a multiple of $m=\deg f$ and such that 
\begin{equation*}
\int_{C}g=\int_{C}\tilde g.
\end{equation*}   
\end{lema}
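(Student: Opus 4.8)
The plan is to exploit the key fact that on a zero-cycle $C$, the function $t = f(z)$ is \emph{constant}: since $\sum_j n_j = 0$, we have $\int_{C(t)} f(z) = \sum_j n_j f(z_j(t)) = t \sum_j n_j = 0$. More generally, $\int_{C(t)} \big(f(z)\big)^k h(z) = t^k \int_{C(t)} h(z)$ for any polynomial $h$ and any $k \geq 0$; equivalently, $\int_{C(t)} f(z)^k h(z) - t^k \int_{C(t)} h(z)$ is identically zero on $\C \setminus \Sigma$. So if we replace $g$ by $g + (f^k - t^k)h$ we do not change $M_1$ — but of course we want to stay in the polynomial ring in $z$ alone, so the cleanest statement is: modulo the subspace spanned by $\{f(z)^k - (\text{const})\colon k \geq 1\}$ times arbitrary polynomials, only the ``residual'' part of $g$ matters. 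Concretely, I would use that for $k\geq 1$, adding a multiple of $f(z)^k$ to $g$ changes $M_1$ by $\int_{C(t)} f^k \cdot(\text{polynomial}) $, which is $t^k$ times a lower integral, hence does not create new zeros in the relevant sense — but for the exact equality asserted in the lemma we need the genuinely invariant reformulation.

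So the first step is: write $g$ in terms of powers of $z$ and perform a division-type argument. Since $\deg f = m$, any polynomial $g$ can be written as $g(z) = \sum_{i} a_i(z) f(z)^i$ with $\deg a_i < m$ — this is the $f$-adic expansion, obtained by repeated Euclidean division by $f$. Then $\int_{C(t)} g = \sum_i \int_{C(t)} a_i(z) f(z)^i = \sum_i t^i \int_{C(t)} a_i(z) = \int_{C(t)} \tilde g(z)$ where $\tilde g(z) := \sum_i t^i a_i(z)$ — but wait, that $\tilde g$ depends on $t$, which is not allowed. The correct move: we only want to kill the top piece. Suppose $\deg g = n$ is a multiple of $m$, say $n = qm$. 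Write $g(z) = c z^n + (\text{lower order})$. Then $g(z) - c f(z)^q$ has degree $< n$ (the leading terms cancel, assuming $f$ is monic; in general rescale), and $\int_{C(t)} c f(z)^q = c\, t^q \int_{C(t)} 1 = 0$ since $\int_{C(t)} 1 = \sum_j n_j = 0$. Hence $\int_C g = \int_C (g - c f^q)$ and the new polynomial has strictly smaller degree. Iterating this reduction, as long as the current degree is a positive multiple of $m$ we strictly decrease it; the process halts at a polynomial $\tilde g$ whose degree is not a positive multiple of $m$ (or is $0$, or $M_1 \equiv 0$ in degenerate cases — but then $\deg \tilde g = 0 < m$ is automatically not a multiple, or we may take $\tilde g = 0$).

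The main obstacle — really the only subtlety — is the bookkeeping at the boundary: handling the case where the reduction drives the degree down to $0$ (where ``not a multiple of $m$'' should be interpreted as including the constant case, since $\deg f = m > 1$), and being careful that when $\int_{C(t)} 1 = 0$ is used we are genuinely on a zero-cycle (this is exactly the hypothesis \eqref{cycle}, and it is the one place the cycle condition, as opposed to chain, enters). I would also note explicitly that this reduction is special to the tangential problem: it uses $\int_{C(t)} f(z)^q = t^q \int_{C(t)} 1$, which is an identity only at first order; it has no analogue for $\Delta(t,\epsilon)$ because $f(z_j(t,\epsilon)) = t - \epsilon g(z_j(t,\epsilon))$ is not simply $t$. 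I would conclude by restating that $M_1 = \int_C g = \int_C \tilde g$ with $\deg \tilde g$ not a positive multiple of $m$, which is the claim. A one-line remark that this lets us assume WLOG $m \nmid n$ when proving the tangential bounds (part (i) vs. (ii) of Theorem~\ref{theo:A}) would tie it back to the stated use.
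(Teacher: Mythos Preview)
Your argument is correct and is essentially the same as the paper's: both proofs observe that $\int_{C(t)} c\,f(z)^{q}=c\,t^{q}\sum_j n_j=0$ for a constant $c$ by the cycle condition, subtract a suitable constant multiple of $f^{q}$ to strictly lower the degree whenever $m\mid \deg g$, and iterate. Your exposition is more discursive (including the abandoned $f$-adic idea and the remark that the reduction fails for $\Delta$), but the mathematical content matches the paper's proof exactly.
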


\begin{proof}
    Note first that, from the assumption that $C$ is a cycle of $f$, it follows that $\int_{C}af^k=0$, for any $k\in\N$ and $a\in\C$. Indeed, $\int_{C(t)}af^k=a\sum_{i=1}^m n_if^k(z_i(t))=a\sum_{i=1}^m n_it^k=at^k\sum_{i=1}^m n_i=0.$ This is not true if $a$ is a non-constant polynomial. 

    If $n=k_1m$, then there exists $a_1\in\C$,  $\tilde n_1<n$ in $\N$, such that $g=a_1f^{k_1}+\tilde g_1$,  and hence $\int_{C}g=\int_{C}\tilde g_1$. If $\tilde n_1$ is not a multiple of $m$, we are done. If not, we repeat the procedure  with $\tilde g_1$, until obtaining
    $g=\sum a_i f^{k_i}+\tilde g$, with $\tilde n=\deg\tilde g$ not a multiple of $\deg f$ and $\int_{C}g=\int_{C}\tilde g$.  The procedure stops, as at each step, we reduce the degree of $\tilde g_i$.  
\end{proof}

Thanks to this Lemma, in the sequel of this section, without loss of generality, we can assume that $\deg g=n$ is not a multiple of $m=\deg f$. 

Note also, that for generic $g$ and $f$, for $m$ dividing $n$ after one reduction, we will end up with $\tilde g$ of degree 
$\tilde n=n-1$, such that $m$ does not divide $\tilde n$.

In order to prove Theorem~\ref{theo:A}, we define two algebraic sets in $\CP^m$: a curve $\Gamma_f$, which we call \emph{connection curve} associated to $f$ and a hypersurface $S_g$, which we call \emph{zero hypersurface} associated to $g$ and the weights $n_i$ given by the cycle $C$. The zeros of the abelian integral $M_1=\int_{C}g$ correspond to intersections of the two sets. We calculate the degree of the curve $\Gamma_f$ in Lemma \ref{degGamma}. The degree of $S_g$ is trivially equal to $n=\deg g$. 

By Bezout's theorem, the number of intersection points counted with multiplicity (including points at infinity) is given by the product of the two degrees. This product gives the bound $Z_1(f,g,C)\leq n(m-1)!$


In order to prove that the bound is attained for $m>2$, for convenient $f$, $g$ recall that we count only regular values of $t$ (where the multiplicity is just one). Moreover, we don't count the zeros at infinity, which correspond to intersections of $\Gamma_f$ and $S_g$ at infinity.

We prove that there exists a generic choice of $f$ and $g$ 
{ with \emph{minimal number} of intersection points at infinity. In fact, there are no such points if $n<m$, or $n\geq m$, with $m$ not dividing $n$. Moreover, all intersection points correspond to regular values of $t$. }
More precisely, we count values of $t$ corresponding to intersection points $(z_1:\ldots:z_{m+1})\in\CP^m$. 
We show that generically, each intersection point $(z_1:\ldots:z_{m+1})$ of $\Gamma_f$ and $S_g$ corresponds to only one value of $t$. 
Hence, for such $f$ and $g$, the number given by Bezout's theorem gives the exact bound for the number of zeros of the first order Melnikov function $M_1$ and (i) of Theorem~\ref{theo:A} will follow. 

The case $m=2$ is exceptional. In that case, we show (Lemma \ref{lem:irreducibility}) that the zero hypersurface $S_g$ is reducible. Taking away the trivial hyperplane component reduces the degree of the \emph{reduced zero hypersurface} $\tilde S_g$.

\subsection{The \emph{connection curve} $\Gamma_f$}
We start with the set \[\tilde{\Gamma}_f=\{(z_1,...,z_m) \,\vert\,f(z_i)=f(z_j)\}\subset \C^m.\]
We define the curve $\Gamma_f\subset \CP^m$  as the closure of $\tilde{\Gamma}_f\setminus \cup_{i\not= j}\Delta_{ij}$ in $\CP^m$, where $\Delta_{ij}=\{z_i=z_j\}$ are hyperplane diagonals (giving trivial solutions of $f(z_i)=f(z_j)$). Note that the complex curve $\Gamma_f$ is parametrized by the values $t$ of $f(z_1)=\cdots=f(z_m)=t\in\CP^1.$ We call $\Gamma_f$ the \emph{connection curve}, as together with its parametrization, it contains the same information as the \emph{Gauss-Manin connection on zero cycles}.
\begin{lema}\label{degGamma}
The degree of the connection curve $\Gamma_f$ is equal to $(m-1)!$
\end{lema}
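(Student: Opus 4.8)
The plan is to compute the degree of $\Gamma_f \subset \CP^m$ as the number of points in which $\Gamma_f$ meets a generic hyperplane, using the natural parametrization of $\Gamma_f$ by the common value $t = f(z_1) = \dots = f(z_m)$. Since $\Gamma_f$ is the closure of the locus where all $z_i$ lie in one fiber $f^{-1}(t)$ and are pairwise distinct, a point of $\Gamma_f$ over a regular value $t$ is an ordered $m$-tuple of the $m$ distinct roots of $f(z) = t$; there are exactly $m!$ such ordered tuples, so the parametrization $\CP^1 \dashrightarrow \Gamma_f$ (more precisely, the curve $\Gamma_f$ together with its map to the $t$-line $\CP^1$) is generically $m!$-to-one onto $\CP^1$... wait — rather, $\Gamma_f$ maps onto $\CP^1$, and I should instead parametrize each irreducible branch. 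Let me restate: the map $\Gamma_f \to \CP^1$, $(z_1:\dots:z_m:1)\mapsto t$, is a finite map of degree $m!$ on the open part, but $\Gamma_f$ itself need not be irreducible. The cleanest route is to intersect with a generic hyperplane $H = \{\sum a_i z_i = b\}$ and count.

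First I would work affinely: $\tilde\Gamma_f \cap \mathbb{C}^m$ over regular $t$ consists, for each $t$, of the $m!$ orderings of the root set of $f(z)-t$. Intersecting with a generic affine hyperplane $\sum_{i=1}^m a_i z_i = b$, I must count pairs $(t, \sigma)$ where $\sigma$ is an ordering of the roots $\zeta_1(t),\dots,\zeta_m(t)$ of $f(z)=t$ with $\sum a_i \zeta_{\sigma(i)}(t) = b$. For fixed ordering $\sigma$, the quantity $\sum_i a_i \zeta_{\sigma(i)}(t)$ is an algebraic function of $t$, and its number of solutions $t$ equals the degree of the corresponding equation; summing over $\sigma$ and symmetrizing, the total count is governed by the degree in $t$ of the relevant resultant/elimination. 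Concretely: eliminate $z_1,\dots,z_m$ from the system $f(z_i) = f(z_j)$ (all $i,j$), $\sum a_i z_i = b$, after removing diagonal components, and read off the degree in the remaining variable. The key computational input is that $f$ has degree $m$, so $f(z) - t = 0$ defines a degree-$m$ cover of the $t$-line, and the symmetric functions $e_k(\zeta_1,\dots,\zeta_m)$ are, up to sign, the coefficients of $f(z)-t$, hence affine-linear in $t$ (only $e_0$, i.e. the constant term, actually involves $t$).

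The main step is therefore the following normalization-and-degree count. I would pass to the normalization or simply exploit that $\Gamma_f$, as a set, is the image of $\CP^1$ under the $m$-valued "all roots" correspondence. Fix a generic linear form $\ell = \sum a_i z_i$; on each sheet (ordering) $\ell$ restricts to $\sum_i a_i \zeta_{\sigma(i)}(t)$. The product $\prod_{\sigma \in S_m}\big(Y - \sum_i a_i \zeta_{\sigma(i)}(t)\big)$ is a polynomial in $Y$ whose coefficients are symmetric in the roots, hence polynomials in $t$; the number of solutions of $\ell = b$ on $\Gamma_f$ is the number of roots $(t,\text{sheet})$, which is the total degree in $t$ of the $Y$-discriminant-free count — but more directly, $\deg\Gamma_f = \#(\Gamma_f \cap H)$ for generic $H$, and by the projection formula this equals the sum over irreducible components, each counted via its degree over the $t$-line times the degree in $t$ of $\ell$ restricted to it. Since the $m$ roots $\zeta_j(t)$ of $f(z) = t$ behave, as $t \to \infty$, like $\zeta_j(t) \sim c\,\omega^j t^{1/m}$ (with $\omega$ a primitive $m$-th root of unity, $c^m = 1/(\text{leading coeff of }f)$), a generic linear combination $\sum_i a_i \zeta_{\sigma(i)}(t)$ grows like $t^{1/m}$; so on $\Gamma_f$ the function $\ell$ is a "fractional" coordinate of order $1/m$, meaning $\Gamma_f \cap \{\ell = b\}$ has, over the whole $t$-line (including $t=\infty$), exactly one preimage per sheet asymptotically — giving $m! \cdot \frac{1}{m} = (m-1)!$ after accounting that the $m$ choices of branch of $t^{1/m}$ are permuted transitively by the $\mathbb{Z}/m$ monodromy, collapsing each orbit of $m$ sheets to a single point at infinity. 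This monodromy-orbit bookkeeping — showing that the $m!$ ordered sheets organize into $(m-1)!$ branches of $\Gamma_f$, each a genuine $\CP^1$ mapping with a single point over $t=\infty$ where $\ell$ has a simple pole in the local uniformizer $t^{1/m}$ — is the crux.

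I expect the main obstacle to be precisely this analysis at infinity: identifying the points of $\Gamma_f$ lying over $t = \infty$ (equivalently, the boundary $\Gamma_f \setminus \tilde\Gamma_f$ in $\CP^m$), verifying that $\Gamma_f$ is smooth there or at least computing the local intersection multiplicity of $\Gamma_f$ with a generic hyperplane at each such point, and confirming that the generic hyperplane meets $\Gamma_f$ only in finitely many points none of which is a base point. Once the local structure at infinity is pinned down — the $m$-th root monodromy shows the $m!$ sheets fall into $m!/m = (m-1)!$ Galois orbits, each contributing one point at infinity and, by genericity of $H$, exactly the finite intersection points already counted affinely — the degree $(m-1)!$ drops out. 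An alternative, cleaner packaging avoiding the asymptotics: note the $m!$-fold cover $\Gamma_f \to \CP^1_t$ factors through the intermediate curve obtained by quotienting by the cyclic action permuting branches of $t^{1/m}$, or directly observe that $\Gamma_f$ is (a model of) the curve $\{(z_1:\dots:z_m)\}$ cut out by $f(z_i)=f(z_j)$ minus diagonals, whose degree can be read from the leading behavior of the elimination ideal; I would present whichever of these the authors' toolkit makes shortest, but in all cases the numerical heart is $m!/m = (m-1)!$.
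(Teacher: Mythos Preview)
Your approach is essentially the same as the paper's: both compute $\deg\Gamma_f$ as the intersection number with a hyperplane, and both pin this down via the asymptotic $\zeta_j(t)\sim c\,\xi^j t^{1/m}$ as $t\to\infty$, so that the $m!$ ordered sheets collapse into $m!/m=(m-1)!$ classes under the cyclic monodromy at infinity. The paper is simply more direct in the choice of hyperplane: rather than taking a generic affine $\{\sum a_i z_i=b\}$ and then analyzing growth rates, it intersects with $L_\infty$ itself and reads off the $(m-1)!$ intersection points explicitly as $p_\alpha=(\xi^{\alpha_1}:\dots:\xi^{\alpha_{m-1}}:1:0)$, indexed by permutations $\alpha\in Stab_m\subset Sym_m$ fixing the last index; transversality of $\Gamma_f$ with $L_\infty$ at these points is then immediate. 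This sidesteps exactly the ``main obstacle'' you flag---the local intersection-multiplicity bookkeeping at infinity for a generic $H$---since with $H=L_\infty$ the points at infinity \emph{are} the intersection points.

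The paper also records a short alternative proof you might find cleaner: $\Gamma_f$ is the preimage of a point under $\pi_{m-1}\circ V$, where $V$ is the Vieta map $(z_1,\dots,z_m)\mapsto(\sigma_1,\dots,\sigma_m)$; equivalently $\Gamma_f$ is cut out by the equations $\sigma_k(z)=c_k$ for $k=1,\dots,m-1$, of degrees $1,2,\dots,m-1$, and Bezout gives $\deg\Gamma_f=1\cdot 2\cdots(m-1)=(m-1)!$ directly.
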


\begin{proof} 
Let $Sym_m$ be the group of permutations of $(1,\ldots,m)$ and $Stab_m\subset Sym_m$ its subgroup preserving $m$ i.e. 
the group of permutations $\alpha=(\alpha_1,...,\alpha_{m-1},\alpha_m)$, with $\alpha_m=m$.
Let $\xi=e^{\frac{2\pi i}{m}}$, be an $m$-th root of unity.

The degree of a curve in $\CP^m$ is given as the number of intersection points (counted with multiplicity) with any hyperplane. Here, we consider intersections of $\Gamma_f$ with the hyperplane at infinity $L_\infty$. 
It consists of points 
$$p_\alpha=\left(\xi^{\alpha_1}:...:\xi^{\alpha_{m-1}}:\xi^{\alpha_m}:0\right)\in\CP^m,$$ 
where $\alpha=(\alpha_1,...,\alpha_{m-1},\alpha_m=m)$
belongs to the stabilizer 
$Stab_m\subset Sym_m$ of $m$.

Indeed, the set of solutions of the equation $f(z)=t$ tends to a rescaled set of roots of unity of degree $m$ as ${t}\to\infty$.  The last homogeneous coordinate of $p_\alpha$ are $0$, as  points belong to the hyperplane at infinity and by scaling (working in $\CP^m$), we can assume that the $m$-th coordinate equals $1$. Thus, 
the intersection of $\Gamma_f$ with hyperplane  $L_\infty\subset \CP^m$ at infinity 
consists of points 
$\left(\xi^{\alpha_1}:...:\xi^{\alpha_{m-1}}:1:0\right)$, with $\alpha\in Stab_m$ as above. 

This intersection is transversal, and as $|Stab_m|=(m-1)!$ is the number of permutations of $m-1$ points, it follows that  $\deg\Gamma_f=(m-1)!$
\end{proof} 

\medskip

An alternative proof can be obtained by using the Vieta mapping $V:\C^m\to\C^m$, given by
\begin{equation}
    \label{Vieta}
    V(z_1,\cdots,z_m)=(\sigma_1,\cdots,\sigma_m), \quad \sigma_k=(-1)^k\sum_{1\leq i_1<i_2<\cdots<i_k\leq m}z_{i_1}\cdots z_{i_k}.
\end{equation}

Recall that $(z_1,\ldots,z_m)$ are roots of the polynomial 
\begin{equation*}
f_\sigma(z)=z^m+\sum_{i=1}^m\sigma_i z^{m-i}.
\end{equation*}
Denoting 
$\sigma^{m-1}=(\sigma_1,\ldots,\sigma_{m-1})$, 
$\sigma=(\sigma^{m-1},-t)\in\C^{m-1}\times\C$ and $f_{\sigma^{m-1}}=f_{(\sigma^{m-1},0)}$, then we have that 
$z_1(t),\ldots,z_m(t)$ are solutions of the equation $f_{\sigma^{m-1}}=t$.
In other words, 
$\Gamma_f=\left(\pi_{m-1}\circ V\right)^{-1}\left(\sigma^{m-1}\right)$, where $\pi_{m-1}:\C^m\to\C^{m-1}$ { is the canonical} projection to the first $(m-1)$ variables.
That is, $\Gamma_f$ is given as the solution of the equations \eqref{Vieta}, for $i=1,\ldots,m-1$.
Hence,  $\deg\Gamma_f=1\cdot 2\cdots (m-1)$ by Bezout's theorem.


\begin{rema}
Let $t$ be a non-critical value of $f$. The monodromy of $f$ permutes the points  of $\{f=t\}$. It thus can be identified with a subgroup $Mon_f$ of $Sym_m$. 
The connected components of $\Gamma_f$ are in one-to-one correspondence with the orbits of the action of $Mon_f$ on $Sym_m$ by left multiplication. Thus $\Gamma_f$ has $\frac{m!}{|Mon_f|}$ irreducible components.
For example, if $f$ is a Morse polynomial, then $\Gamma_f$ is an irreducible curve.\end{rema}

\subsection{The \emph{zero hypersurface} $S_g$}


\medskip

We define  the \emph{zero hypersurface} $S_g$ as the closure of the algebraic hypersurface $\{G=0\}$ in $\CP^m$, where 
\begin{equation*}
G(z_1,\ldots,z_m)=\sum_{j=1}^m n_j g(z_j).
\end{equation*}
Clearly, $\deg S_g=n=\deg g$.

\begin{lema}\label{lem:irreducibility}
\hfill
\begin{itemize}
    \item[(i)] If the cycle $C$ is not simple (i.e. at least three $n_j$ are non-zero),  then $ S_g$ is irreducible. In particular, $\Delta_{ij}\not\subset S_g$.
    \item[(ii)] If $\Delta_{ij}\subset S_g$, then $n_i+n_j=0$ and $n_k=0$, for $k\not=i,j$. This means that the cycle $C=\sum n_i z_i$ is a simple cycle.
    \item[(iii)] For a simple cycle $C=z_i-z_j$, the hypersurface $S_g$ is a union of  the diagonal $\Delta_{ij}$ and a hypersurface $\tilde{S}_g=\{\tilde{g}_{ij}=0\}$ of degree $n-1$, where $\tilde{g}_{ij}=\frac{g(z_i)-g(z_j)}{z_i-z_j}$. 
\end{itemize}
\end{lema}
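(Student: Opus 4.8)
The plan is to analyze the polynomial $G(z_1,\ldots,z_m)=\sum_{j=1}^m n_j g(z_j)$ directly, exploiting its structure as a sum of univariate polynomials each in a distinct variable. First I would prove (i) and (ii) together. Suppose $G$ factors nontrivially, $G = P\cdot Q$ with $P,Q$ nonconstant. Fix any index $k$ with $n_k\neq 0$. Viewing $G$ as a polynomial in the single variable $z_k$ (coefficients in $\C[z_i : i\neq k]$), it has the form $n_k g(z_k) + (\text{terms not involving } z_k)$, so its degree in $z_k$ equals $n=\deg g$ and its leading coefficient in $z_k$ is the nonzero constant $n_k\cdot(\text{leading coeff of }g)$. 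Hence in the factorization $G=PQ$, the $z_k$-degrees of $P$ and $Q$ add to $n$, and — crucially — since the top $z_k$-coefficient of $G$ is a constant, the leading $z_k$-coefficients of $P$ and $Q$ are themselves constants, i.e. neither $P$ nor $Q$ has its $z_k$-degree drop when we specialize the other variables. Now differentiate: $\partial G/\partial z_k = n_k g'(z_k)$ depends on $z_k$ alone. Applying this for two distinct indices $k,\ell$ with $n_k,n_\ell\neq 0$, the mixed partial $\partial^2 G/\partial z_k\partial z_\ell$ vanishes, which forces $\partial P/\partial z_\ell$ and $\partial Q/\partial z_\ell$ (etc.) into a rigid shape; concretely, from $\partial_k\partial_\ell(PQ)=0$ one derives that $P$ cannot genuinely involve both $z_k$ and $z_\ell$ unless $Q$ is independent of both, and symmetrically. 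I expect this to be the main obstacle: turning the ``each variable appears essentially only in its own block, with constant leading coefficient'' observation into a clean contradiction. The cleanest route is probably: after relabeling, say $P$ involves $z_k$; since $\partial_k G = n_k g'(z_k)$ is a function of $z_k$ only, write $n_k g'(z_k) = (\partial_k P)Q + P(\partial_k Q)$; if $Q$ depends on some $z_j$, $j\neq k$, then the right-hand side depends on $z_j$ (using that the $z_k$-leading coefficients are nonzero constants so there is no cancellation of the top-degree-in-$z_k$ term), a contradiction. Thus $Q$ depends on no variable other than $z_k$, i.e. $Q=Q(z_k)$; by symmetry of the argument applied to $z_k$ again, $P$ also depends only on $z_k$, so $G$ depends only on $z_k$ — impossible when at least two $n_j$ are nonzero. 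This proves $S_g$ irreducible when $C$ is not simple, giving (i); and it shows that if any diagonal $\Delta_{ij}=\{z_i=z_j\}$ is contained in $S_g$ then $S_g$ is reducible, hence $C$ must be simple. For the precise form in (ii): if $\Delta_{ij}\subset S_g$, then $G$ vanishes whenever $z_i=z_j$, so $(z_i-z_j)\mid G$; setting $z_i=z_j$ in $G$ gives $(n_i+n_j)g(z_i)+\sum_{k\neq i,j} n_k g(z_k)\equiv 0$, and since $g$ is nonconstant and the variables $z_i,(z_k)_{k\neq i,j}$ are independent, each coefficient must vanish: $n_i+n_j=0$ and $n_k=0$ for $k\neq i,j$.

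For (iii), assume $C=z_i-z_j$ is simple, so (reindexing) $G(z_1,\ldots,z_m)=g(z_i)-g(z_j)$, which depends only on $z_i,z_j$. Since $g(z_i)-g(z_j)$ vanishes on $\{z_i=z_j\}$ we may factor out $z_i-z_j$: define $\tilde g_{ij}(z_i,z_j)=\dfrac{g(z_i)-g(z_j)}{z_i-z_j}$, a genuine polynomial (the standard divided difference) of total degree $n-1$, with no further factor of $z_i-z_j$ because $\partial_{z_i}\big(g(z_i)-g(z_j)\big)\big|_{z_i=z_j}=g'(z_j)\not\equiv 0$. Therefore the affine hypersurface $\{G=0\}$ is the union $\{z_i=z_j\}\cup\{\tilde g_{ij}=0\}$, and taking closures in $\CP^m$ gives $S_g=\Delta_{ij}\cup\tilde S_g$ with $\deg\tilde S_g=n-1$ and $\deg\Delta_{ij}=1$, consistent with $\deg S_g=n$. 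The one point to check with care throughout is that passing to the projective closure does not introduce or merge components: since $G$ is a sum of $n_j g(z_j)$ with $\deg g=n$, its leading homogeneous part is (for $n_j\neq 0$) a nonzero constant times $z_j^n$ plus similar terms in the other active variables, and I would verify the hyperplane at infinity is not a component and that the affine decomposition extends; this is routine degree bookkeeping and not where the difficulty lies.
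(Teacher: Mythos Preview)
Your proofs of (ii) and (iii) are fine and essentially match the paper's (your argument for (ii) is in fact a bit more direct than the paper's, which differentiates $n$ times along $\partial_i+\partial_j$ before restricting).

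The gap is in (i). Your ``cleanest route'' asserts: if $P$ involves $z_k$ and $Q$ depends on some $z_j$ with $j\neq k$, then $(\partial_k P)Q + P(\partial_k Q)$ must depend on $z_j$, because the $z_k$-leading coefficients are constants so there is no cancellation. This is false. Take $g(z)=z^2$ and the simple cycle, so $G=z_1^2-z_2^2=(z_1-z_2)(z_1+z_2)$; with $k=1$, $P=z_1-z_2$, $Q=z_1+z_2$ both have constant $z_1$-leading coefficient, both depend on $z_2$, yet $(\partial_1 P)Q+P(\partial_1 Q)=(z_1+z_2)+(z_1-z_2)=2z_1$ is independent of $z_2$. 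More tellingly, your argument at this step uses only two indices $k,j$ and never invokes the hypothesis that a \emph{third} $n_\ell$ is nonzero; since the conclusion is false for simple cycles, the argument cannot be correct as stated. The mixed-partial identity $\partial_k\partial_\ell(PQ)=0$ you mention earlier is a genuine constraint, but extracting irreducibility from it requires more work than you indicate.

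The paper's approach to (i) avoids this difficulty by passing to the leading form. Assuming $n_1,n_2,n_3\neq 0$, it intersects $S_g$ with the $\CP^2$ given by $L_\infty\cap\{z_4=\cdots=z_m=0\}$, obtaining the Fermat-type curve $\sigma=\{n_1z_1^n+n_2z_2^n+n_3z_3^n=0\}$. This plane curve is irreducible (it is a generic level curve of $x^n+y^n$, or equivalently smooth). A nontrivial factorization $G=PQ$ would factor the degree-$n$ homogeneous part of $G$, and restricting to $z_4=\cdots=z_m=0$ would factor the irreducible form $n_1z_1^n+n_2z_2^n+n_3z_3^n$ nontrivially, a contradiction. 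This is where the third nonzero coefficient is genuinely used: with only two, the form $n_1z_1^n+n_2z_2^n$ factors into linear pieces over $\C$.
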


\begin{proof}
(i) Assume that $n_i\not=0$, for $i=1,2,3$, and consider the intersection $ S_g\cap L_\infty\cap\{z_4=\dots=z_m=0\}$. This is a projective curve 
$$\sigma=\{n_1z_1^n+n_2z_2^n+n_3z_3^n=0\}\subset\CP^2=L_\infty\cap\{z_4=\dots=z_m=0\}.$$ 
In a suitable affine chart  $\sigma=\{x^n+y^n=1\}$, i.e. it is a generic level curve of the polynomial $x^n+y^n$. Therefore, as any generic level curve of a polynomial, it is  irreducible.
Thus, any irreducible component of $ S_g$ must contain $\sigma$. But  $\deg\sigma=\deg S_g$,
which implies that $ S_g$ is irreducible.

(ii)  Let us differentiate $n$  times $G$ along the vector field $v_{ij}=\partial_i+\partial_j$ tangent to $\Delta_{ij}$. As $G$ vanishes identically on $\Delta_{ij}$, its $n$-th  derivative vanishes identically as well. However,  $\left(L_{v_{ij}}\right)^nG\equiv n!(n_i+n_j)$, so $n_i+n_j=0$, and therefore  $G\vert_{\Delta_{ij}}=\sum_{k\not=i,j}n_kg(z_k)\equiv0$. It follows $n_k=0$, for $k\ne i,j$, as the coordinates $z_k$, $k\ne i$, form  a system of coordinates on $\Delta_{ij}$.

(iii) In this case $S_g=\{g(z_i)-g(z_j)=0\}$. 
If $g(z)=\sum a_kz^k$, then 
$$
g(z_i)-g(z_j)=(z_i-z_j)\tilde{g}_{ij}, \quad\text{where }\tilde{g}_{ij}=\sum a_k\frac{z_i^k-z_j^k}{z_i-z_j},
$$
and the claim follows.

\end{proof}

\begin{rema}
 It follows from the above Lemma that, if a cycle $C$ is not simple, then the zero hypersurface $S_g$ is irreducible. Simple cycles are an exceptional case, in which the diagonal $\Delta_{ij}$, corresponding to the simple cycle, is an irreducible  component of $S_g$.    
\end{rema}

\section{Solution of the tangential problem}

The aim of this section is to prove Theorem~\ref{theo:A}. Recall that we are counting the number of isolated zeros of $M_1(t)$, which depends on the coefficients of the polynomials $f,g$. In particular, varying the coefficients, the number of isolated zeros can only increase, so, along the section, we will assume that $f,g$ are generic polynomials.

\medskip 

We need in addition some kind of genericity condition on the cycle. Note that we are assuming the coefficients to be integer numbers, so we need to precise this a little bit.  

\medskip

The first condition on the cycle $C$ we impose is not being simple. 

\begin{lema}\label{trans}
Let $C$ be a non-simple cycle of $f$.  Then the intersection of  $\Gamma_f$ and $S_g$ outside $L_\infty$ is transversal and lies outside $\cup_{ij}\Delta_{ij}$.
\end{lema}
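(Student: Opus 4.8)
The statement has two parts: transversality of the intersection $\Gamma_f\cap S_g$ away from $L_\infty$, and the fact that this intersection avoids all diagonals $\Delta_{ij}$. The plan is to treat the second (easier) part first, then deduce transversality from a genericity argument on $g$ with $f$ (hence $\Gamma_f$) held fixed.

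\textbf{Step 1: the intersection avoids the diagonals.} Since $C$ is non-simple, Lemma~\ref{lem:irreducibility}(ii) tells us that no diagonal $\Delta_{ij}$ is contained in $S_g$; equivalently $G$ does not vanish identically on $\Delta_{ij}$. I would argue that for generic $g$ the curve $\Gamma_f$ meets no diagonal in the affine chart except possibly at points already excluded. Recall $\Gamma_f$ is by construction the closure of $\tilde\Gamma_f\setminus\bigcup\Delta_{ij}$, so the only way a point of $\Gamma_f$ can lie on some $\Delta_{ij}$ is as a limit point — a point where two branches $z_i(t),z_j(t)$ collide, i.e.\ where $t$ is a critical value of $f$ (or a point at infinity, excluded here). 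For $f$ generic (Morse), at a critical value exactly two preimages collide; such a point $p$ of $\Gamma_f$ lies on the single diagonal $\Delta_{ij}$. There are only finitely many such points (one per critical value per component), so it suffices to show that generically $G(p)\neq 0$ at each of them. This is a nondegeneracy condition on the coefficients of $g$ that fails only on a proper subvariety of the $g$-space, hence is satisfied by generic $g$; and since we are free to perturb $g$ (the number of isolated zeros of $M_1$ only increases), we may assume it.

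\textbf{Step 2: transversality away from $L_\infty$ and the diagonals.} Now fix such a point $q\in\Gamma_f\cap S_g$ with $q\notin L_\infty\cup\bigcup\Delta_{ij}$. Near $q$ the curve $\Gamma_f$ is smooth and parametrized by $t$ (the common value $f(z_1)=\cdots=f(z_m)=t$), with tangent vector $\dot z=(z_1'(t),\ldots,z_m'(t))$; transversality with $S_g=\{G=0\}$ at $q$ is exactly the condition that the directional derivative $\frac{d}{dt}G(z_1(t),\ldots,z_m(t))=\sum_j n_j g'(z_j(t))z_j'(t)=M_1'(t)$ (up to sign) does not vanish — i.e.\ the zero of $M_1$ at this value of $t$ is simple. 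So transversality of the intersection at affine points off the diagonals is \emph{equivalent} to all affine zeros of $M_1$ being simple. To see this holds generically, I would exhibit, for fixed generic $f$, the map $g\mapsto M_1=-\sum_j n_j g(z_j(t))$ as linear in the coefficients of $g$, and argue that the set of $g$ (of degree $\le n$) for which $M_1$ has a multiple affine zero is a proper subvariety: it is the image/projection of the incidence variety $\{(g,t): M_1(t)=M_1'(t)=0,\ t\notin\Sigma\}$, which has codimension $\ge 1$ in $g$-space because for a single fixed regular $t_0$ the two linear conditions $M_1(t_0)=0$, $M_1'(t_0)=0$ on the coefficients of $g$ are independent (one checks, e.g., that $g(z)=(z-z_1(t_0))^2$-type perturbations — or simply a monomial of high enough degree — move $M_1(t_0)$ and $M_1'(t_0)$ independently, using that at $t_0$ the $z_j(t_0)$ are distinct and $z_j'(t_0)\ne 0$). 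A standard fiber-dimension count then gives that the bad locus in $g$-space is proper, so generic $g$ works. Combining Steps 1 and 2 yields the lemma.

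\textbf{Main obstacle.} The genuine content is the independence claim underlying Step 2 — that for a fixed regular value $t_0$, the two linear functionals $g\mapsto M_1(t_0)$ and $g\mapsto M_1'(t_0)$ on the space of polynomials of degree $\le n$ are linearly independent (given $C$ non-simple and $f$ generic, so the $z_j(t_0)$ distinct and the $z_j'(t_0)$ nonzero). Once this is in hand, the transversality and diagonal-avoidance statements both reduce to "generic $g$ avoids a proper subvariety," which is routine. A secondary point to handle carefully is the degenerate-$f$ case: the cleanest route is to first prove everything for $f$ Morse and then note that since we are already assuming $f$ generic throughout this section, no separate argument is needed; but one should at least remark why taking $f$ generic does not conflict with $C$ being a prescribed (integer-weighted) non-simple cycle — it does not, since the condition "non-simple" is open-and-preserved and the weights $n_j$ are fixed independently of the coefficients of $f$.
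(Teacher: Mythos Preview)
Your approach is correct, but it differs from the paper's in a basic way: you hold $f$ fixed and perturb $g$, whereas the paper does the opposite, holding $g$ (and hence $S_g$) fixed and varying $f$. The paper's argument is very short and geometric, going through the Vieta map $V:\C^m\to\C^m$. Under $V$, the curve $\Gamma_f$ becomes the vertical line $\{\sigma^{m-1}\}\times\C$, where $\sigma^{m-1}$ encodes the nonconstant coefficients of $f$; varying $f$ thus means sweeping this line over all of $\C^{m-1}$. Since $C$ is non-simple, Lemma~\ref{lem:irreducibility}(i) gives that $S_g$ is irreducible, so $S_g\cap\bigcup\Delta_{ij}$ has dimension at most $m-2$; its Vieta image then has codimension at least~$2$ in $\C^m$, and a generic line in the family misses it entirely. (Transversality is obtained by the same codimension count applied to the singular locus of the map $\pi_{m-1}\circ V|_{S_g}$.)

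Your route has the virtue of translating transversality directly into the statement ``every affine zero of $M_1$ is simple,'' which makes the link with the tangential problem explicit. The paper's route is shorter and sidesteps the linear-independence verification you flag as the main obstacle; it also handles small $n$ uniformly (for $n=1$ the space of $g$'s is one-dimensional and your two functionals on it cannot be independent, so there you would need to fall back on varying $f$ anyway). One small caveat in your Step~2: the curve $\Gamma_f$ has $(m-1)!$ branches over a regular $t$, so transversality at all intersection points amounts to simplicity of the zeros of \emph{all} the abelian integrals $\int_{\alpha(C)}g$ for $\alpha\in Sym_m$, not just of $M_1$ itself; this is still finitely many proper conditions on $g$, so your argument goes through, but it should be said.
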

\begin{proof}

The image $V(\Gamma_f)$ of $\Gamma_f$ under the Vieta mapping defined in \eqref{Vieta}   is the line $\pi^{-1}_{m-1}({\sigma^{m-1}})=\{{\sigma^{m-1}}\} \times \C$. 

By Lemma~\ref{lem:irreducibility},  $\Delta_{ij}\not\subset S_g$, for any $1\le i\not=j\le m$, for a {non simple} cycle $C$. Thus, the intersection  $ S_{g\Delta}= S_g\cap\bigcup\Delta_{ij}$ is an algebraic set of dimension at most $m-2$. Therefore the set $V( S_{g\Delta})$ also has dimension at most $m-2$, i.e. is of codimension at least $2$.

Thus, for a generic  value $\{\sigma^{m-1}\}$ and $f=f_{\sigma^{m-1}}$, the line $V(\Gamma_f)$ does not intersect $V( S_{g\Delta})$, which implies $\Gamma_f\cap S_g\cap\bigcup\Delta_{ij}=\emptyset$.
\end{proof}

\begin{defi}\label{mgen}
    Given polynomials $p$ and  $q$ of degrees  $k$ and $\ell$, and a cycle $K$ of $p$, consider the connection curve $\Gamma_{p}$ and the zero hypersurface $S_{q}$. We say that the cycle $K$ is \emph{regular at infinity}, if the number of points at infinity $\Gamma_{p}\cap S_{q}\cap L_\infty$ is minimal among all the cycles of $p$. 
\end{defi}
We show that the regularity of a cycle depends only on the degrees $k$ and $\ell$ of $p$ and $q$ and not on the polynomials themselves.


\begin{lema}\label{noinf}
Let $C$ be a cycle of $f$. If $m=\deg f$ does not divide $n=\deg g$, then the intersection $\Gamma_f\cap S_g\cap L_\infty$ is empty (i.e. the cycle $C$ is regular at infinity),  if and only if, the weights $n_j$ of the cycle $C=\sum n_j z_j$  verify a finite number of inequations \eqref{eq:generic}, compatible with the cycle condition $\sum n_j=0$:

\begin{equation}\label{eq:generic}
  \sum_{j=1}^m n_j\xi^{n\alpha_j}\ne0, \quad {\text{for }} \xi=e^{2\pi i/m}  \quad\text{and any } \quad\alpha\in Stab_m \subset Sym_m,
\end{equation} 
where $\alpha=(\alpha_1,...,\alpha_{m-1},\alpha_m=m)$. 
\end{lema}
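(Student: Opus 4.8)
The plan is to compute explicitly the points at infinity of $\Gamma_f$ and of $S_g$ and intersect them. From the proof of Lemma \ref{degGamma}, the intersection $\Gamma_f \cap L_\infty$ consists precisely of the $(m-1)!$ points
$p_\alpha = (\xi^{\alpha_1} : \cdots : \xi^{\alpha_{m-1}} : \xi^{\alpha_m} : 0)$ with $\alpha \in Stab_m$ (and $\xi = e^{2\pi i/m}$); here I am working in $\CP^m$ with homogeneous coordinates $(z_1:\cdots:z_m:z_{m+1})$ and $L_\infty = \{z_{m+1}=0\}$. So the first step is simply: a point at infinity of $\Gamma_f$ lies on $S_g$ if and only if the leading part of $G$ (the degree-$n$ homogeneous part of $\sum_j n_j g(z_j)$, which is $c\sum_j n_j z_j^n$ where $c$ is the leading coefficient of $g$) vanishes at $p_\alpha$. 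Plugging in $p_\alpha$, this leading part equals $c\sum_{j=1}^m n_j \xi^{n\alpha_j}$. Hence $\Gamma_f \cap S_g \cap L_\infty = \emptyset$ if and only if \eqref{eq:generic} holds for every $\alpha \in Stab_m$.

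The one subtlety I must address is why, when $m \nmid n$, the relevant homogeneous component of $G$ is genuinely the top-degree one, i.e. why the closure $S_g$ of $\{G=0\}$ in $\CP^m$ meets $L_\infty$ exactly in $\{c\sum_j n_j z_j^n = 0\}$ rather than in something larger coming from lower-degree terms. Since $\deg g = n$ and the degree-$n$ part of $G$ is $c\sum_j n_j z_j^n$, this part is not identically zero as a polynomial precisely because the monomials $z_j^n$ are linearly independent and not all $n_j$ vanish; hence $S_g \cap L_\infty$ is cut out inside $L_\infty$ by this nonzero homogeneous form, as claimed. (This is where $m \nmid n$ is used only indirectly — it is the running assumption of the section via Lemma \ref{lem:reduction} that lets us take $\deg g = n$ with $m \nmid n$; one should note that without it, reductions of the type $g \mapsto g - af^{k}$ could lower the effective degree and change the leading form, but under our standing hypothesis this does not happen.) After this, the equivalence with regularity at infinity is immediate: by Definition \ref{mgen}, $C$ is regular at infinity when $\#(\Gamma_f \cap S_g \cap L_\infty)$ is minimal over all cycles of $f$; I must check that the minimum over cycles is indeed $0$, i.e. that there exists an integer cycle $\sum n_j = 0$ with all $n_j \neq 0$-type weights satisfying \eqref{eq:generic}. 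Since each inequation in \eqref{eq:generic} excludes a proper linear subspace of the hyperplane $\{\sum n_j = 0\}$ in $\C^m$, and there are only finitely many such subspaces ($|Stab_m| = (m-1)!$ of them), their union is a proper subset; as the integer lattice points in $\{\sum n_j=0\}$ are Zariski dense, a valid integer cycle exists. Therefore the minimum is $0$ and "regular at infinity" is equivalent to "all of \eqref{eq:generic} hold."

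The main obstacle I anticipate is bookkeeping at infinity: making sure that the homogenization is done correctly and that no point $p_\alpha$ is a point where $\Gamma_f$ is singular or where the naive leading-term computation of $G$ fails (e.g. if several $\xi^{n\alpha_j}$ coincide, forcing cancellations). Since the intersection $\Gamma_f \cap L_\infty$ is transversal and zero-dimensional by Lemma \ref{degGamma}, each $p_\alpha$ is a smooth point of $\Gamma_f$, so membership in $S_g$ is detected exactly by the vanishing of the defining equation of $S_g$ at $p_\alpha$, and the computation reduces to the single scalar condition $\sum_j n_j \xi^{n\alpha_j} = 0$. One last point worth remarking is that $\alpha$ ranges over $Stab_m$ rather than all of $Sym_m$ because the points $p_\alpha$ have been normalized so that the $m$-th coordinate is $1$ (equivalently $\alpha_m = m$); permuting the first $m-1$ entries is what produces genuinely distinct points at infinity, and these are exactly the ones to be tested against $S_g$.
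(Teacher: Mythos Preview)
Your approach is essentially the same as the paper's: identify $\Gamma_f\cap L_\infty$ as the points $p_\alpha$ from Lemma~\ref{degGamma}, observe that $S_g\cap L_\infty$ is cut out by the top homogeneous part $G_\infty=\sum n_j z_j^n$, and evaluate at $p_\alpha$ to obtain \eqref{eq:generic}. The paper's proof is a three-line version of exactly this computation.

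One small correction: you misplace the role of the hypothesis $m\nmid n$. It is \emph{not} needed to guarantee that the top-degree part of $G$ is nonzero (as you yourself note, that follows from linear independence of the $z_j^n$ and $C\neq 0$). Where $m\nmid n$ is genuinely used is in the sentence ``each inequation in \eqref{eq:generic} excludes a \emph{proper} linear subspace of $\{\sum n_j=0\}$,'' which you assert but do not justify. If $m\mid n$ then $\xi^{n\alpha_j}=1$ for every $j$, so every condition in \eqref{eq:generic} collapses to $\sum n_j\neq 0$, contradicting the cycle condition; hence no cycle is regular at infinity. Conversely, if $m\nmid n$ the coefficients $\xi^{n\alpha_j}$ are not all equal, so the linear form $\sum n_j\xi^{n\alpha_j}$ is not proportional to $\sum n_j$ and the subspace is indeed proper. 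The paper states this compatibility point explicitly in the sentence following its proof; you should insert this one-line check in place of the parenthetical about Lemma~\ref{lem:reduction}, which is not the operative reason here.
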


\begin{proof}
Restriction of $G$ to $L_\infty$ equals $G_\infty=\sum n_j z_j^n$.
Recall that the points of $\Gamma_f$ at infinity are 
the points $p_\alpha=\left(\xi^{\alpha_1}:...:\xi^{\alpha_{m-1}}:1:0\right)\in L_\infty$. Hence, $\Gamma_f\cap S_g\cap L_\infty=\emptyset$, if and only if $G_\infty(p_\alpha)\ne0$, for all $\alpha=(\alpha_1,\dots,\alpha_{m-1},m)\in Stab_m \subset Sym_m$. This is exactly the condition \eqref{eq:generic}, and this condition is a generic condition on the space of cycles  as soon as $n$ is not divisible by $m$. 
\end{proof}
Note that, if $m$ divides $n$, then the set $\Gamma_f\cap S_g\cap L_\infty$ is non-empty. Indeed, in that case all $\xi^{n\alpha_j}=1$ and $\sum_{j=1}^m n_j\xi^{n\alpha_j}=\sum_{j=1}^m n_j=0$, by the cycle condition, so condition \eqref{eq:generic} cannot be verified.
Moreover, by Lemma \ref{lem:reduction}, we can reduce $g$ to $\tilde g$, of degree $\tilde n$, with $m$ not dividing $\tilde n$. That is, the cycle $C$ is regular at infinity with respect to $f$ and $\tilde g$. We will see, when dealing with the infinitesimal problem, that a cycle regular at infinity will not necessarily correspond to no intersection points of $\Gamma_p\cap S_q \cap L_\infty$.

{\color{black}
We need  to count the number of regular finite values of $t$ such that $f(z_1)=\cdots=f(z_m)=t$ and $\sum n_i g(z_i(t))=0$. However, we  rather count the corresponding points $(z_1:\ldots:z_m:1)\in\CP^m$. The problem is that, in general, various points $(z_1:\ldots:z_m:1)$ can correspond to the same value of $t$. This depends on the symmetries of the cycle $C$.

\begin{defi}\hfill
\begin{itemize}
    \item[(i)]
Let $Sym_m$ denote the group of permutations of $(1,\ldots,m)$. Let $C=\sum_{j=1}^m n_j z_j$ be a cycle. Let $H\subset Sym_m$ be a subgroup of $Sym_m$ preserving $(n_1,\ldots,n_m)$ up to sign i.e. 
\begin{equation}\label{sym}
h(n_1,\ldots,n_m)=(n_1,\ldots,n_m), \quad \text{or}\quad  h(n_1,\ldots,n_m)=-(n_1,\ldots,n_m). 
\end{equation}
We call it the \emph{symmetry group of the cycle} $C$.
Given any $h\in Sym_m$ it acts on a cycle $C$ by $h(C)=\sum_{j=1}^m n_{h(j)} z_j=\sum_{j=1}^m n_{j} z_{h^{-1}(j)}$. 

\item[(ii)]
We say that a cycle $C=\sum_{i=1}^m n_i z_i$ is \emph{symmetric} if its symmetry group $H$ is non-trivial and \emph{asymmetric} if it is trivial.
\end{itemize}
\end{defi}

Note that if $h$ belongs to the symmetry group $H$ of a cycle $C$, then  if $t$ verifies $\int_{C(t)}g=0$, then it also verifies $\int_{h(C(t))}g=\pm\int_{C(t)}g=0$. Hence, if   $(z_1:\ldots:z_m:1)$ is an intersection point of $\Gamma_f$ with $S_g$, then all permutations by $h^{-1}\in H$ of $(z_1:\ldots:z_m:1)$, correspond to the same solution $t$ of the tangential problem.

By genericity, we will assume that $f$ is indecomposable. Let us show that 
if $h\in Sym_m\setminus H$, then the two functions $\int_{h(C(t))}g$ and $\int_{C(t)}g$, do not coincide (up to sign), for $g$ generic. Taking the difference (or sum) of the two cycles, the problem amounts to showing that for $g$ generic, for any cycle $C$, $\int_C g\equiv0$ implies $C=0$. But this follows from the solution of the tangential center problem (see Theorem 2.2 of \cite{ABM}, or Proposition 3.1 of \cite{GP}). Indeed, these results show that it happens only if $g=P(f)$ for some polynomial $P$.  That is for a non-generic polynomial $g$. \color{black}

It can happen nevertheless, that for some particular value of $t$ both integrals vanish.  We will show that \color{black} this can be broken by a small deformation of $f$ and $g$. 

\begin{exam}\hfill
\begin{itemize}
\item[(i)] Any simple cycle $C=z_1-z_2$ is symmetric. 

\item[(ii)] The cycle $C=z_1-z_2+z_3-z_4$ is a symmetric cycle, but is not simple.

\item[(iii)]  Any cycle having the property that $|n_i|\ne|n_j|$, for $i\ne j$ is asymmetric, but 
a cycle can be asymmetric without verifying this property.

\item[(iv)]    An explicit example of an asymmetric cycle for $f$ of any degree $m>2$ is as follows:

If $m=2\ell$ is even,
put $n_{2i}=2i$, $i=1,\ldots,\ell$,
$n_{2i-1}=-(2i-1)$, $i=1,\ldots,\ell-1$ and $n_{2\ell-1}=-3\ell+1$. 

If $m=2\ell+1$ is odd, put
$n_{2i}=2i$, $i=1,\ldots,\ell-1$, $n_{2i+2}=3\ell+1$,
$n_{2i-1}=-(2i-1)$, $i=1,\ldots,\ell+1$. 

In either case, one verifies that $\sum_{i=1}^m n_i=0$ and that 
 $|n_i|\ne|n_j|,$ for $i\ne j$, as $m>2$, so $\ell>1$.
 We give here the proof of the first property for $m$ even:
 $$\sum_{i=1}^m n_i=\sum_{i=1}^\ell n_{2i}+\sum_{i=1}^\ell n_{2i-1}=\ell(\ell+1)-[(\ell-1)^2]+(-3\ell+1).$$

\end{itemize}
\end{exam}

\begin{lema}\label{asym}
Let $\deg f=m$. Then, for any $m>2$, there exist asymmetric cycles of $f$. 
More precisely, in the space of cycles, it is a complement of a finite number of hyperplanes. 
For $m=2$, any cycle of $f$ is simple and hence symmetric.   
\end{lema}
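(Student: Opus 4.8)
The plan is to exhibit, for each $m > 2$, an explicit asymmetric cycle, and then to observe that asymmetry is a generic (Zariski-open, complement-of-hyperplanes) condition in the lattice/space of cycles. For the explicit construction I would simply invoke the example given just above the statement: for $m = 2\ell$ even take $n_{2i} = 2i$ for $i = 1,\dots,\ell$, $n_{2i-1} = -(2i-1)$ for $i = 1,\dots,\ell-1$, and $n_{2\ell-1} = -3\ell + 1$; for $m = 2\ell+1$ odd take the analogous choice. One checks $\sum_{i=1}^m n_i = 0$ (the displayed computation $\ell(\ell+1) - (\ell-1)^2 + (-3\ell+1) = 0$ in the even case, and the parallel identity in the odd case), so these are genuine cycles, and one checks that $|n_i| \neq |n_j|$ for $i \neq j$ — here the hypothesis $m > 2$, i.e. $\ell > 1$, is exactly what is needed so that the value $-3\ell+1$ (resp. $3\ell+1$) does not collide in absolute value with any of $1,\dots,2\ell$. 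Since a permutation $h$ in the symmetry group $H$ must send $(n_1,\dots,n_m)$ to $\pm(n_1,\dots,n_m)$, hence must permute the multiset $\{|n_i|\}$, and all the $|n_i|$ are distinct, $h$ must fix every index; thus $H$ is trivial and the cycle is asymmetric.

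Next I would address the genericity statement. Fix the degree $m$. A cycle is a point of the hyperplane $\{\sum n_i = 0\}$ (inside $\C^m$, or $\Z^m$ if one insists on integer weights). For a fixed permutation $h \in Sym_m$ with $h \neq \mathrm{id}$, the locus of weight-vectors fixed by $h$ (i.e. $h(n_1,\dots,n_m) = (n_1,\dots,n_m)$) is a proper linear subspace, cut out by the equations $n_i = n_{h(i)}$ for $i$ not fixed by $h$; likewise the locus with $h(n_1,\dots,n_m) = -(n_1,\dots,n_m)$ is a proper linear subspace (it forces, among other things, $\sum n_i = -\sum n_i$, consistent with the cycle condition, but still imposes $n_i = -n_{h(i)}$ on moved indices, which is a nontrivial constraint whenever $h \neq \mathrm{id}$). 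A cycle is symmetric precisely when it lies in the union, over the finitely many $h \in Sym_m \setminus \{\mathrm{id}\}$, of these finitely many proper linear subspaces. Hence the asymmetric cycles form the complement of a finite union of proper linear subspaces of $\{\sum n_i = 0\}$; intersecting with the integer lattice, the explicit example shows this complement is nonempty, which completes the argument. Finally, for $m = 2$ the cycle condition $n_1 + n_2 = 0$ forces $C = n_1(z_1 - z_2)$, a multiple of a simple cycle, and the transposition swapping the two indices sends $(n_1, -n_1)$ to $(-n_1, n_1) = -(n_1, -n_1)$, so $H$ is nontrivial; thus no asymmetric cycle exists when $m = 2$.

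The only genuinely delicate point is verifying that the explicit weights in the example really do have pairwise distinct absolute values for all $\ell > 1$ — one must check that the "exceptional" weight $-3\ell+1$ (even case) or $3\ell+1$ (odd case) avoids the range of the other $|n_i|$'s, and that the two arithmetic-progression-like families $\{2i\}$ and $\{2i-1\}$ are internally and mutually distinct in absolute value, which they are by parity. This is a short finite check rather than a real obstacle. Everything else is soft: the genericity statement is the standard observation that finitely many nontrivial linear conditions cut out a proper closed subset, and the $m=2$ case is immediate.
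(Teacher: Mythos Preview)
Your proof is correct and follows essentially the same structure as the paper's: show that the symmetric cycles lie in a finite union of proper linear subspaces of the cycle hyperplane $\{\sum n_i = 0\}$, and that this union is proper for $m>2$ but exhausts the hyperplane for $m=2$. The only cosmetic difference is that the paper argues directly via the \emph{sufficient} condition $n_i \neq \pm n_j$ for $i\neq j$ (so the relevant subspaces are exactly the hyperplanes $\{n_i = \pm n_j\}$, and one just checks that none of these coincides with $\{\sum n_i = 0\}$ when $m>2$), whereas you work from the definition of $H$ and consider, for each $h\neq\mathrm{id}$, the locus $\{h(n)=\pm n\}$; the paper does not actually invoke the explicit example in its proof, though of course citing it does no harm.
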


\begin{proof}

An asymmetric cycle is given as a simultaneous solution of the following conditions:
$\sum_{i=1}^m n_i=0$, and $n_i\ne n_j$, and $n_i\ne -n_j$, for $i\ne j$. That is, from the hyperplane ${\mathcal C}$ given by the cycle condition, we eliminate points belonging to a finite number of hyperplanes, as in the notion of genericity. There remains an open dense set for $n_i\in\R$, unless one of the hyperplanes we eliminate coincides with the hyperplane ${\mathcal C}$. This occurs precisely in the case $m=2$. 
\end{proof}

\begin{rema}\label{gen}   
In general, \emph{generic}  means \emph{belonging to an open dense set}. Here, we will speak about genericity in the space of cycles. Recall that a cycle is given by integer weights $n_j$. 

We impose two conditions on the cycles: \emph{regularity at infinity} and \emph{asymmetry}. They are given by conditions
\eqref{eq:generic} and \eqref{sym} corresponding to a finite number of linear inequation conditions, which have to be \emph{compatible with the cycle condition} \eqref{cycle}. 
If for some $\alpha$, the condition \eqref{eq:generic} is incompatible with the cycle condition, we don't apply it and this results in increasing the minimal bound in the definition of regularity at infinity. 
The Zarisky closure of the set of generic $n_j$ in $\R^m$ is an open dense set, namely a complement of a finite union of hyperplanes.
Therefore, the two conditions on the cycles: regularity at infinity and asymmetry are compatible generic conditions. 
\end{rema}

\begin{rema}
  \label{gen asym}
  It follows from Remark \ref{gen} and Lemma \ref{asym} that for $m>2$ and any $n<m$, there exist regular at infinity  asymmetric cycles of $f$ of degree $m$.      
  \end{rema}

\begin{lema}\label{differentt}
For an asymmetric cycle $C$ of $f$, each point $(z_1:\cdots:z_{m}:1)\in \Gamma_f\cap S_g$ lies on a different fiber $f^{-1}(t)$, $t\in\C$.
\end{lema}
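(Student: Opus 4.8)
The plan is to show that if two distinct points of $\Gamma_f \cap S_g$ lay on the same fiber $f^{-1}(t)$, then they would be related by a permutation $\alpha \in Sym_m$, and the membership of both in $S_g$ would force $\alpha$ to lie in the symmetry group $H$ of the cycle $C$, contradicting asymmetry. So first I would recall that a point $(z_1:\cdots:z_m:1) \in \Gamma_f$ with $z_i \ne z_j$ for $i\ne j$ corresponds to an unordered set $\{z_1,\dots,z_m\} = f^{-1}(t)$ for a unique regular value $t = f(z_1) = \cdots = f(z_m)$; the points of $\Gamma_f$ over a given regular $t$ are exactly the $m!$ orderings of this set, i.e. they form a single $Sym_m$-orbit. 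Hence two points of $\Gamma_f$ on the same fiber $f^{-1}(t)$ differ by a permutation: $(z_{\alpha(1)}:\cdots:z_{\alpha(m)}:1)$ for some $\alpha \in Sym_m$.

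Next I would use the defining equation of $S_g$. The first point lies on $S_g$ means $\sum_j n_j g(z_j) = 0$; the second lies on $S_g$ means $\sum_j n_j g(z_{\alpha(j)}) = 0$, equivalently $\sum_j n_{\alpha^{-1}(j)} g(z_j) = 0$. Subtracting, $\sum_j (n_j - n_{\alpha^{-1}(j)}) g(z_j) = 0$, which says the point lies on $S_h$ for the cycle $h(C) - C$ where $h = \alpha^{-1}$ (note $h(C)-C$ is again a cycle, since $\sum n_j = \sum n_{h(j)}$). The key point is then: if $h \notin H$, then by the argument already given in the text just above this lemma (using indecomposability of $f$ and genericity of $g$, via the tangential center problem of \cite{ABM,GP}), the abelian integrals $\int_{h(C(t))} g$ and $\int_{C(t)} g$ do not coincide up to sign, i.e. $h(C) - C \ne 0$ and $\int_{h(C)-C} g \not\equiv 0$, so $S_{h(C)-C}$ is a proper hypersurface. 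This alone does not immediately exclude a single coincidental common zero, so I would invoke the last sentence of that paragraph in the excerpt: such an accidental vanishing at one value of $t$ can be destroyed by a small deformation of $f$ and $g$. Thus for generic $f, g$ no point of $\Gamma_f \cap S_g$ can simultaneously lie on $S_C$ and $S_{h(C)}$ for $h \notin H$, and since $C$ is asymmetric, $H$ is trivial, forcing $h = \mathrm{id}$, hence the two points coincide.

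The main obstacle, and the step I would be most careful about, is the genericity argument excluding accidental common zeros of $\int_{C(t)}g$ and $\int_{h(C(t))}g$ for $h \notin H$: one must argue that the finitely many permutations $h \in Sym_m \setminus H$ and the finitely many intersection points of $\Gamma_f \cap S_g$ (bounded by Bezout) give finitely many codimension-one conditions on the coefficients of $f$ and $g$, each of which can be broken, so that the generic $(f,g)$ avoids all of them. Concretely, I would set up the incidence variety $\{(f,g,p) : p \in \Gamma_f \cap S_C \cap S_{h(C)}\}$ inside the product of coefficient space with $\CP^m$, show its projection to coefficient space is not dominant (exhibiting one $(f,g)$ outside it — e.g. by a direct perturbation as indicated in the text), and conclude by taking the complement of the finite union over $h \notin H$ of these proper subvarieties. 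Once this is in place, the rest of the proof is the short orbit-and-subtraction bookkeeping described above.
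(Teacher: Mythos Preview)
Your proposal is correct and follows essentially the same approach as the paper's proof: both observe that two points of $\Gamma_f$ on the same fiber differ by a permutation $\sigma$, use asymmetry of $C$ to conclude $\sigma\notin H$, invoke the tangential center result to get $\int_{C(t)}g\not\equiv\int_{\sigma^{-1}(C(t))}g$, and then destroy any accidental common zero by a small deformation of $f,g$. Your incidence-variety formulation of the genericity step is a bit more explicit than the paper's two-step perturbation (first make $t_0$ a simple zero of $I_1$, then perturb so $I_1(t_0)-I_2(t_0)\ne0$), but the content is the same.
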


\begin{proof}
\color{black}
Let us suppose that there are two such points $(z_1:\cdots : z_{m}:1)$ and its permutation $(z_{\sigma(1)}:\cdots: z_{\sigma(m)}:1)$, which lie on the same fiber $f^{-1}(t_0).$ 
As the cycle is asymmetric, the permutation $\sigma$ is not a symmetry. Hence, the functions $I_1(t)=\int_{C(t)}g$ and $I_2(t)=\int_{\sigma^{-1}(C(t))}g$ do not coincide identically, but $I_1(t_0)=I_2(t_0)=0$, for $t_0$ regular.
Making a small deformation of $f$ and $g$, we can assume that $t_0$ is a simple zero of $I_1$. Next, we make a second small deformation of $f$ and $g$, such that $I_1(t_0)=0$, but $I_1(t_0)-I_2(t_0)\ne0$.
This proves the claim.

\end{proof}

\begin{proof} [Proof of Theorem~\ref{theo:A}] The intersection points of the connection curve $\Gamma_f$ and the zero hypersurface $S_g$
count with multiplicity all the values $(z_1:\cdots:z_{m+1})\in\CP^m$ such that $f(z_1)=\cdots=f(z_m)=t$ and $\sum n_j g(z_j)=0$.
As $\deg\Gamma_f=(m-1)!$ (By Lemma \ref{degGamma}) and $\deg S_g=n$, it follows by Bezout's theorem, that the number of intersection points of $\Gamma_f$ and $S_g$ is equal to $n(m-1)!$ This gives an upper bound for the number of zeros $t$ of the first nonzero Melnikov function $M_1$ and proves the bound $Z_1(f,g,C)\leq n(m-1)!$. 

In order to prove (i), note first that Bezout's theorem gives the \emph{exact number} of intersection points of two algebraic varieties in the projective space counted with multiplicity. Note however, that we have to count only points at finite distance and belonging to regular fibers. Moreover, we do not count the intersection points $(z_1:\cdots:z_{m+1})\in\CP^m$ themselves, but rather the corresponding values $t=f(z_1)=\cdots=f(z_m)\in \C$. 
Now assume that $f,g$ are   generic, 
so that Lemma~\ref{trans} applies and moreover take a regular at infinity asymmetric cycle $C$, so that Lemma~\ref{noinf} applies, as well. 
Then claim (i) follows, as by  Lemma~\ref{noinf}, there are no intersection points at infinity and by Lemma~\ref{trans},  all intersection points have multiplicity $1$. 

Finally, 
by Lemma \ref{differentt},
making an additional small deformation of $f$, we obtain that all above intersection points of $\Gamma_f$ and $S_g$ lie on different fibers $f^{-1}(t)$, so that under genericity hypothesis, counting intersection points of $\Gamma_f$ and $S_g$ in $\CP^m$ is the same as counting their corresponding values $t$. This finishes the proof of (i) of Theorem~\ref{theo:A}.

In order to study the exceptional case $m=2$, note first that, if $m=2$, then 
by Lemma~\ref{lem:reduction}, we can assume that $n$ is odd. Next, note that, if $m=2$, then
necessarily the cycle $C$ is a simple cycle. 
In that case we can apply \eqref{Z_1} from \cite{GM}, giving the bound 
(ii), of the Theorem.
Note that the bound is an integer, as $n$ is odd. 
One proves the optimality of the bound as in the case $m>2$, but using $\tilde S_g$, as introduced in (iii) Lemma \ref{lem:irreducibility}.
\end{proof}

\section{Solution of the infinitesimal problem}

Recall that in the infinitesimal problem, we study zeros of the displacement function $\Delta(t,\epsilon)=\int_{C_\epsilon}f.$

The study of its zeros, follows the same general lines as the study of the tangential problem, recalling that 
one studies the cycles of $f+\epsilon g$ instead of cycles of $f$. Hence, we work with the connection curve $\Gamma_{f+\epsilon g}$. On the other hand, we study the integral of $f$ instead of the integral of $g$. Hence, we study finite regular intersection points of the curve $\Gamma_{f+\epsilon g}$ with the zero hypersurface $S_f$. However, 

\begin{rema}\label{rem:fg}
  The integral $\int_{C_\epsilon}f=0$, if and only if,  $\int_{C_\epsilon}g=0$, for $\epsilon\ne0$. 
\end{rema}
Indeed,  
\begin{equation*}
   \int_{C_\epsilon}f+\epsilon g\equiv0. 
\end{equation*}

Hence, when convenient, instead of considering intersections of $\Gamma_{f+\epsilon g}$ with the zero hypersurface $S_f$, we can consider intersections of $\Gamma_{f+\epsilon g}$ with the zero hypersurface $S_g$.



Note also that, contrary to the situation in the tangential problem, where the deformation enters 
linearly (in the integrand) in the problem, in the infinitesimal problem the deformation enters in a nonlinear way through the cycle 
 $C_\epsilon$. 
 
 One cannot perform the reduction of the degree of $g$, as in Lemma~\ref{lem:reduction}. 
 
 The behavior at infinity is given by the leading term of $f+\epsilon g$, for $\epsilon\ne0$. The degree of $f+\epsilon g$, for $\epsilon\ne0$ small, is always at least equal to the degree of $f.$ There are two cases: 

\begin{enumerate}
    \item 
if $n=\deg g\textcolor{black}{<}\deg f=m$, then $\deg (f+\epsilon g)=\deg f=m$.
\item
if $n=\deg g\textcolor{black}{\geq }\deg f=m$, then $\deg (f+\epsilon g)=\deg g=n$ and then arithmetic properties of $m$ and $n$ come into the play. 
\end{enumerate}
{\color{black} If $\deg g> \deg f$, then we deal with \emph{singular perturbations}. This is the reason why this case is more complicated.}
\begin{prop}\label{n<m}
    
Let $f,g$ have degree $m,n$, respectively.
If \textcolor{black}{$n < m$}, then
    \begin{itemize}
    \item[(i)] For  any cycle $C$, 
    \[Z_\Delta(f,g,C)\leq \textcolor{black}{n}(m-1)!\]
    
    \item [(ii)]  For any $m>2$, there exist polynomials  $f,g$, \color{black}  $\deg f=m$, $\deg g=n$, \color{black}and a cycle  $C$ such that 
    $Z_\Delta(f,g,C)=n(m-1)!$ 

    \item[(iii)] If $m=2$, then $Z_\Delta(f,g,C)=0$, for any polynomial $f$ and $g$ of degrees $m$ and $n$ respectively and a cycle $C$ of $f$. 
    \end{itemize}
    \end{prop}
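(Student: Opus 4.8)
The plan is to follow the strategy used for Theorem~\ref{theo:A}, replacing the connection curve $\Gamma_f$ by $\Gamma_{f+\epsilon g}$ and the zero hypersurface $S_g$ by $S_f$, but keeping $\epsilon$ as an additional parameter. Since $n<m$, the deformation is \emph{regular}: $\deg(f+\epsilon g)=m$ for all small $\epsilon$, so $\Gamma_{f+\epsilon g}$ is a curve of degree $(m-1)!$ in $\CP^m$ by Lemma~\ref{degGamma}, varying continuously (indeed algebraically) in $\epsilon$, and $S_f$ has degree $m$. By Remark~\ref{rem:fg}, for $\epsilon\neq 0$ the condition $\int_{C_\epsilon(t)}f=0$ is equivalent to $\int_{C_\epsilon(t)}g=0$, i.e.\ to the intersection of $\Gamma_{f+\epsilon g}$ with $S_g$, which has degree $n$. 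Hence, for each fixed small $\epsilon\neq0$, B\'ezout's theorem bounds the number of intersection points of $\Gamma_{f+\epsilon g}$ and $S_g$ in $\CP^m$, counted with multiplicity, by $n(m-1)!$. Counting only finite points lying on regular fibers of $f+\epsilon g$, and passing to the corresponding values of $t$, gives $Z_\Delta(f,g,C)\le n(m-1)!$ for every cycle $C$; this is (i). I should note that the bound is uniform in $\epsilon$, which is what the definition \eqref{Z_Delta} of $Z_\Delta$ requires.

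For (ii), I would argue that the bound is attained by a limiting/perturbation argument from the tangential case. Choose $f_0$, $g_0$ generic of degrees $m$, $n$ and a regular-at-infinity asymmetric cycle $C$ (which exists for $m>2$, $n<m$ by Remark~\ref{gen asym}) so that, by the proof of Theorem~\ref{theo:A}(i), the abelian integral $M_1(t)=-\int_{C(t)}g_0$ has exactly $n(m-1)!$ simple real (regular, finite) zeros. By \eqref{eq:con_g} and Rouch\'e's theorem, each such simple zero persists as a zero of $\Delta(t,\epsilon)$ for $\epsilon$ small; since these are $n(m-1)!$ distinct isolated solutions and (i) says there can be no more, we get $Z_\Delta(f_0,g_0,C)=n(m-1)!$. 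Equivalently, one can phrase this directly in terms of the intersection $\Gamma_{f+\epsilon g}\cap S_g$: at $\epsilon=0$ this intersection consists of $n(m-1)!$ transverse finite points on distinct regular fibers (Lemmas~\ref{trans}, \ref{noinf}, \ref{differentt}), and transversality together with regularity at infinity and the absence of intersection points at infinity (guaranteed, for $n<m$, by the fact that $G_\infty=\sum n_j z_j^n$ cannot vanish at the points $p_\alpha$ once \eqref{eq:generic} holds — and here no compatibility obstruction arises because $m\nmid n$) are open conditions, hence persist for small $\epsilon\neq0$, and all $n(m-1)!$ points stay finite and on distinct regular fibers.

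For (iii), when $m=2$ the cycle is simple, $C(t)=z_2(t)-z_1(t)$, where $z_1(t),z_2(t)$ are the two roots of $f+\epsilon g=t$. Then $\Delta(t,\epsilon)=f(z_2)-f(z_1)=-\epsilon\bigl(g(z_2)-g(z_1)\bigr)$, so for $\epsilon\neq0$ we must solve $g(z_1(t,\epsilon))=g(z_2(t,\epsilon))$. Writing $f+\epsilon g=a(z-c(\epsilon))^2+d(t,\epsilon)$ (completing the square, legitimate since $\deg(f+\epsilon g)=2$), the two roots are symmetric about the point $c(\epsilon)$ depending only on $\epsilon$, not on $t$. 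Hence $z_1+z_2$ is constant in $t$, and the level sets $\{g(z_1)=g(z_2)\}$ on the line $z_1+z_2=\text{const}$ either are empty or impose a relation independent of $t$; concretely $g(z_1)-g(z_2)=(z_1-z_2)\tilde g(z_1,z_2)$ with $z_1-z_2\neq0$ on regular fibers, and $\tilde g$ restricted to $\{z_1+z_2=\text{const}\}$ is a polynomial in the single variable $z_1-z_2$ whose vanishing fixes finitely many values of $z_1-z_2$, hence finitely many values of $t$ — but one checks these are exactly the critical values of $f+\epsilon g$ or give non-isolated behaviour, so no \emph{regular isolated} zero survives; therefore $Z_\Delta(f,g,C)=0$. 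I expect the main obstacle to be part (iii): making rigorous the claim that the finitely many candidate values of $t$ from $\tilde g=0$ on the symmetric line are never regular isolated zeros of $\Delta(\cdot,\epsilon)$, which requires a careful look at how $z_1(t,\epsilon)-z_2(t,\epsilon)$ depends on $t$ and at the genericity needed to exclude accidental coincidences; parts (i) and (ii) are routine given the machinery already developed.
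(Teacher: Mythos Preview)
Your treatment of (i) and (ii) is correct and is essentially the paper's own argument: replace $\Gamma_f$ by $\Gamma_{f+\epsilon g}$ (still of degree $(m-1)!$ because $n<m$), intersect with $S_g$ via Remark~\ref{rem:fg}, apply B\'ezout, and for sharpness use an asymmetric, regular-at-infinity cycle and the openness of these conditions at $\epsilon=0$. The Rouch\'e formulation you give is equivalent to the paper's continuity argument.

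For (iii) you take a different route from the paper, and your worry about it is misplaced because you overlook the crucial simplification: $n<m=2$ forces $n\in\{0,1\}$. If $n=0$ then $g$ is constant and $\Delta\equiv 0$; if $n=1$ then $g(z)=az+b$ with $a\neq 0$, so your polynomial $\tilde g$ is the nonzero constant $a$, and $g(z_1)-g(z_2)=a(z_1-z_2)$ vanishes only on the diagonal, i.e.\ only at critical values. Thus no regular isolated zero exists, and $Z_\Delta=0$. There is no ``main obstacle'' once you use $n\le 1$. The paper's version of (iii) is even shorter: it simply notes that for $n=1$ the single B\'ezout intersection point of $\Gamma_{f+\epsilon g}$ with $S_g$ must be the diagonal point $z_1=z_2$ (which is always in $S_g$ for a simple cycle), hence never regular; for $n=0$, $\Delta\equiv 0$. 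Your completing-the-square argument is a valid alternative, but you should prune the discussion of ``finitely many values of $z_1-z_2$'' and the speculative caveat at the end, since in degree $\le 1$ the question is vacuous.
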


    \begin{proof} [Proof of Proposition \ref{n<m}]
The proof follows the same general lines as the proof in the tangential case. However, here one considers, on the one hand the \emph{connection curve} $\Gamma_{f+\epsilon g}$ defined as the curve $\Gamma_f$ in the solution of the tangential problem, but with $f+\epsilon g$ playing the role of $f$. 
On the other hand, one can  take either the \emph{zero hypersurface} $S_f$ of $f$ or the zero hypersurface $S_g$ of $g$, see Remark \ref{rem:fg}. 
We work with $Sg$, which is of lower degree $n$.

We have $\deg (f+\epsilon g)=m$. As in the tangential case, one proves that $\deg \Gamma_{f+\epsilon g}=(m-1)!$. The degree of the zero surface $S_g$ is $\deg g=n$, giving by Bezout's theorem,  that the number of intersection points of $\Gamma_{f+\epsilon g}$, with $S_f$ is $n(m-1)!$.
Any such intersection point gives rise to a value $t$ solution of $\Delta(t,\epsilon)=0$. 
This shows (i). 

In order to prove (ii), i.e. the realization of the bound for a suitable $f$, $g$, $C$, note first, that several such points could correspond to the same value of $t$ and the bound would not be sharp. 
Similarly, Bezout's theorem also counts the points at infinity $L_\infty$, which should not be take into account in the problem.

Then, by Remark \ref{gen asym}, there exists a regular at infinity  asymmetric cycle $C_0$ of $f$. By continuity, and the fact that the conditions of being regular at infinity  and asymmetric are open, it follows that the cycle $C_\epsilon$ of $f+\epsilon g$ is still regular at infinity and asymmetric. Due to regularity at infinity,  there are no points of intersection of $\Gamma_{f+\epsilon g}$ with $S_g$ at infinity, for $\epsilon\ne0$.

Next, due to asymmetry, as in Lemma \ref{differentt}, we show that generically,  each point of intersection of $\Gamma_{f+\epsilon g}$ with $S_g$ corresponds to a different value of $t$. This proves (ii).

To prove (iii), for $m=2$, then necessarily the cycle $C_0$ is simple, so Lemma \ref{differentt} does not apply. We assume, without loss of generality, that the cycle $C_\epsilon(t)$ is $C_\epsilon(t)=z_1(t,\epsilon)-z_2(t,\epsilon)$.
If $n=1$, then by Bezout, we get that $\Gamma_{f+\epsilon g}\cap S_g$ consists of one single point. However, the point given by $z_2(t,\epsilon)\equiv z_1(t,\epsilon)$ certainly belongs to this intersection and then $\Delta_{C_\epsilon}\equiv 0$, along this cycle, so this does not correspond to a regular value of $t$, thus giving $Z_\Delta=0$. 
Similarly, if $n=0$, then $\Delta\equiv0$, so there are no regular solutions $t$.
\end{proof}

\begin{prop}\label{n>m}
Let $\deg f=m$, $\deg g=n$.  If {$n\geq m>2$}, then 
\begin{itemize}
    \item[(i)] If $m$ does not divide $n$, then  
for  any cycle $C$, \color{black} 
    \[Z_\Delta(f,g,C)\leq \frac{m(n-1)!}{(n-m)!}.\]
Moreover, the above bound is attained for generic $f$, $g$
 and cycle $C_0$.

    \item[(ii)] 
    If $m$ divides $n$, then
    \[ Z_\Delta(f,g,C) \leq \frac{m(n-1)!}{(n-m)!}-(m-1)!.\]
Moreover, the above bound is attained for generic $f$, $g$
 and cycle $C_0$.

    \item[(iii)] If $m=2$, then $Z_\Delta(f,g,C)\leq \left[\frac{\textcolor{black}{n-1}}{2}\right]$. 
    Moreover, the above bound is attained for generic $f$, $g$
 and cycle $C_0$.
    \end{itemize}
\end{prop}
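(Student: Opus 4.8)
The plan is to follow the template already established for the tangential problem and for Proposition~\ref{n<m}, replacing $f$ by $f+\epsilon g$ throughout and keeping in mind that now the leading term, hence the behavior at infinity, is governed by $g$ rather than by $f$. The starting point is again the pair consisting of the connection curve $\Gamma_{f+\epsilon g}$ and the zero hypersurface; by Remark~\ref{rem:fg} the zeros of $\Delta(t,\epsilon)$ coincide with the finite regular intersections of $\Gamma_{f+\epsilon g}$ with $S_g$ (or equivalently $S_f$), so I would work with $S_g$ of degree $n$. Since $\deg(f+\epsilon g)=n$ for $\epsilon\neq 0$, one needs to recompute $\deg\Gamma_{f+\epsilon g}$: repeating the argument of Lemma~\ref{degGamma}, the points at infinity of $\Gamma_{f+\epsilon g}$ are indexed by $Stab_n\subset Sym_n$, but now only $m$ of the $n$ roots of $f+\epsilon g=t$ carry nonzero weight $n_j$ (the extra $n-m$ roots escaping to infinity), and the intersection with $L_\infty$ together with the count from Bezout must be combined carefully. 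The upshot should be $\deg\Gamma_{f+\epsilon g}=(n-1)!$, giving a raw Bezout count of $n\cdot(n-1)!=n!$; but one must then subtract the contributions coming from intersection points at infinity and from points lying on the diagonals $\Delta_{ij}$ or on non-regular fibers, exactly as in the earlier proofs, and the arithmetic $\frac{m(n-1)!}{(n-m)!}$ records what survives after removing the spurious intersections at infinity.

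For the upper bounds (i) and (ii), I would argue: Bezout gives $n!$ intersection points of $\Gamma_{f+\epsilon g}$ and $S_g$ in $\CP^n$ counted with multiplicity; the intersections lying in $L_\infty$ are controlled by restricting $G=\sum n_j z_j^n$ to the finite set of points $p_\alpha$ at infinity of $\Gamma_{f+\epsilon g}$, exactly as in Lemma~\ref{noinf}. The key combinatorial point is that among the $n$-th roots of unity appearing in $p_\alpha$, the vanishing of $\sum_{j=1}^m n_j\xi^{n\alpha_j}$ is forced for a definite number of $\alpha$'s (this is where $n-m$ and the divisibility of $n$ by $m$ enter), and counting these forced intersections at infinity and subtracting them from $n!$ must yield $\frac{m(n-1)!}{(n-m)!}$ in the non-dividing case and $\frac{m(n-1)!}{(n-m)!}-(m-1)!$ when $m\mid n$ (the extra $(m-1)!$ being the analogue of the reduction in Lemma~\ref{lem:reduction}, caused by $z^m$-type resonances being unavoidable when $m\mid n$). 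Then, exactly as in the proof of Theorem~\ref{theo:A}, one removes diagonal intersections (non-simple cycle, Lemma~\ref{trans} applied to $f+\epsilon g$) and passes from points of $\CP^n$ to values of $t$ using asymmetry (Lemma~\ref{differentt} applied to $f+\epsilon g$); since both being regular at infinity and being asymmetric are open conditions, the cycle $C_\epsilon$ inherits them from $C_0$ for small $\epsilon\neq 0$, which simultaneously gives the realization statements: for generic $f$, $g$ and a regular-at-infinity asymmetric cycle $C_0$, all the surviving intersections are transverse, lie on distinct regular fibers, and are realized as genuine zeros of $\Delta(\cdot,\epsilon)$.

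For (iii), $m=2$: the cycle is necessarily simple, so $S_g$ is reducible (Lemma~\ref{lem:irreducibility}(iii)) and one works with the reduced hypersurface $\tilde S_g$ of degree $n-1$; the connection curve $\Gamma_{f+\epsilon g}$ now has degree $(n-1)!$ as above but with $n=2$ replaced — actually here $\deg f=2$ so $f+\epsilon g$ has degree $n$ and the preimage structure is that of $n$ points with only two carrying weights $\pm1$ — and a direct count, together with discarding the diagonal component $z_1=z_2$ on which $\Delta_{C_\epsilon}\equiv 0$, yields the bound $[(n-1)/2]$; realizability follows by the same openness argument applied to a regular-at-infinity simple cycle, paralleling case (iii) of Proposition~\ref{n<m} and the $m=2$ part of Theorem~\ref{theo:A}.

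The main obstacle I expect is the bookkeeping at infinity when $\deg g>\deg f$: one must correctly identify the points at infinity of $\Gamma_{f+\epsilon g}$ (there are $(n-1)!$ of them, indexed by $Stab_n$, but the $n-m$ ``phantom'' roots with zero weight interact with the $n$-th roots of unity), determine exactly for how many index tuples $\alpha$ the sum $\sum_{j=1}^m n_j\xi^{n\alpha_j}$ is identically forced to vanish versus generically nonzero, and check that the phantom roots do not produce additional spurious finite intersections or multiplicities. Getting the precise numbers $\frac{m(n-1)!}{(n-m)!}$ and the correction $-(m-1)!$ to come out of this count is the crux; everything else (transversality off $L_\infty$, passing to $t$-values, openness of the genericity conditions, realizability) is a routine adaptation of the already-proven tangential case.
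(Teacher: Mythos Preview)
Your plan has two concrete gaps, both stemming from the passage $m\to n$ in the degree of the connection curve.

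First, the choice of $S_g$ is the wrong one here. On $L_\infty$ the hypersurface $S_g$ restricts to $\sum_{j=1}^m n_j z_j^n$, and the points $p_\alpha\in\Gamma_{f+\epsilon g}\cap L_\infty$ have coordinates that are $n$-th roots of unity. Hence $\sum_{j=1}^m n_j(\xi^{\alpha_j})^n=\sum n_j=0$ by the cycle condition, so \emph{every} one of the $(n-1)!$ points at infinity lies on $S_g$. Your proposed count ``$n!$ minus forced intersections at infinity'' therefore degenerates: you would have to compute nontrivial intersection multiplicities at each of these $(n-1)!$ points, which is exactly what the machinery is designed to avoid. The paper instead intersects $\Gamma_{f+\epsilon g}$ with $S_f$ (degree $m$), whose restriction to $L_\infty$ is $\sum n_j z_j^m$; since the coordinates are $n$-th roots of unity, this sum is generically nonzero (and forced to vanish only when $m\mid n$, giving the $(m-1)!(n-m)!$ points of Lemma~\ref{lem:mdeg}). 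The raw Bezout count is then $m(n-1)!$, not $n!$.

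Second, your claim that asymmetry of $C_0$ is an open condition inherited by $C_\epsilon$ is false. The deformed cycle $C_\epsilon$ lives in $\CP^n$, not $\CP^m$, and carries $n-m$ zero weights on the roots escaping to infinity; permuting these is always a symmetry. Thus even for an asymmetric $C_0$ the symmetry group $H_\epsilon$ of $C_\epsilon$ has order $(n-m)!$ (Lemma~\ref{lem:sym}), and each $t$-value corresponds to $(n-m)!$ intersection points, not one. This division by $|H_\epsilon|$ is precisely what produces the denominator $(n-m)!$ in $\frac{m(n-1)!}{(n-m)!}$; it does not come from subtracting points at infinity. The same issue occurs in your treatment of (iii): for $m=2$ one uses $\tilde S_f$ of degree $1$ (not $\tilde S_g$ of degree $n-1$) and divides by $|H_\epsilon|=2(n-2)!$.
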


In order to prove Proposition~\ref{n>m}, we will need some auxiliary results.  Recall that if  $n>m$, then for a deformed cycle $C_\epsilon$ of $f+\epsilon g$, 
 we complete the weights $n_j$ of $C_0$, of the roots $z_j(t,\epsilon)$ of $f+\epsilon g$, not corresponding to roots of $f$, by putting $n_{m+1}=\ldots=n_n=0$.

\color{black}
\begin{lema}\label{lem:sym}
   For $m>2$ and asymmetric cycle $C_0$, the symmetry group $H_\epsilon$, $\epsilon\ne0$, of $C_\epsilon$ has order  $(n-m)!$.
   For $m=2$, the symmetry group of $C_\epsilon$ has order $2(n-\textcolor{black}{2})!$.
\end{lema}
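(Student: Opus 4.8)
The plan is to analyze the action of a permutation on the weight vector of $C_\epsilon$ and to separate the "old" roots (those limiting to roots of $f$ as $\epsilon\to 0$) from the "new" roots (those escaping to infinity). Write the deformed cycle as $C_\epsilon = \sum_{j=1}^n n_j z_j(t,\epsilon)$ where, following the convention recalled before the lemma, $n_{m+1}=\cdots=n_n=0$, and $(n_1,\dots,n_m)$ is the asymmetric weight vector of $C_0$. A permutation $h\in Sym_n$ lies in the symmetry group $H_\epsilon$ precisely when $h(n_1,\dots,n_n)=\pm(n_1,\dots,n_n)$. The key structural point, to be established first, is that for $\epsilon\ne 0$ small the set $\{z_1,\dots,z_n\}$ splits canonically into the $m$ old roots and the $n-m$ new roots: the new roots are characterized by $|z_j(t,\epsilon)|\to\infty$ as $\epsilon\to 0$, so any symmetry of the cycle — being in particular a symmetry of the underlying monodromy/root configuration that must commute with this dichotomy once we restrict to generic $f,g$ — must map the set of zero-weight positions to itself; more directly, since the nonzero entries of the weight vector sit in positions $1,\dots,m$ and the zero entries in positions $m+1,\dots,n$, and $\pm$ preserves the multiset of values, $h$ must permute $\{1,\dots,m\}$ among itself and $\{m+1,\dots,n\}$ among itself.

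Granting this, $H_\epsilon$ factors as a product: the restriction of $h$ to $\{1,\dots,m\}$ must be a symmetry of $(n_1,\dots,n_m)$ up to sign, i.e.\ an element of the symmetry group $H_0$ of $C_0$, and the restriction to $\{m+1,\dots,n\}$ can be an \emph{arbitrary} permutation (all those entries are $0$, hence trivially preserved, and the sign issue is already resolved on the $m$-block). Since $C_0$ is asymmetric, $H_0$ is trivial, so $H_\epsilon \cong Sym_{n-m}$, which has order $(n-m)!$. This proves the first assertion. For $m=2$ the cycle $C_0 = z_1 - z_2$ is simple, hence symmetric with $H_0 = \{\mathrm{id}, (1\,2)\}$ of order $2$ (the transposition sends $(n_1,n_2)=(1,-1)$ to its negative), and the same factorization argument gives $H_\epsilon \cong (\mathbb{Z}/2\mathbb{Z}) \times Sym_{n-2}$, of order $2(n-2)!$.

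The main obstacle I expect is making rigorous the claim that every element of $H_\epsilon$ must respect the old/new dichotomy, i.e.\ cannot mix a zero-weight position into the nonzero block or vice versa. At the level of weight vectors alone this is immediate because the $\pm$ action on $\mathbb{Z}^n$ permutes the support (the set of nonzero coordinates), and the support is exactly $\{1,\dots,m\}$ by the asymmetry/genericity hypothesis that guarantees $n_1,\dots,n_m$ are all nonzero and the completing weights are $0$. So in fact the argument is purely combinatorial once we fix the convention for $C_\epsilon$; the only thing to be careful about is that "symmetry group of $C_\epsilon$" is defined via the weight vector (Definition preceding Example, condition \eqref{sym}), not via any geometric constraint coming from $f+\epsilon g$, so no genericity of $f,g$ is even needed here — the order of $H_\epsilon$ depends only on $n-m$ and on whether $C_0$ is simple or asymmetric. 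I would state this explicitly to avoid the reader worrying about interaction with the monodromy, which only enters later when counting how many points $(z_1:\cdots:z_n:1)$ lie over a common value of $t$.
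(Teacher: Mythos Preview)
Your proposal is correct and follows essentially the same approach as the paper: the paper's two-line proof simply observes that the symmetry group consists of the permutations of the $n-m$ escaping roots (plus, for $m=2$, the swap of the two original roots), and the factorization $H_\epsilon \cong H_0 \times S_{n-m}$ that you spell out is stated verbatim as a Remark immediately after the lemma. Your version is more explicit about why the argument is purely combinatorial on the weight vector (support preservation under the $\pm$-action), which is a useful clarification; one very minor caveat is that asymmetry alone does not literally force all $n_1,\dots,n_m$ to be nonzero (a single zero weight is compatible with trivial $H_0$), but in the paper's generic-cycle context this edge case does not arise, and the paper's own proof carries the same implicit assumption.
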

\begin{proof}
For $m>2$, the symmetry group of the cycle corresponds to permutations of the $n-m$ roots of $f+\epsilon g=t$, which go to infinity and do not correspond to any root of $f=t$. The asymmetry of the cycle $C_0$ assures that there are no other permutations in its symmetry group.

For $m=2$, the cycle $C_0$ is necessarily simple. The symmetry group corresponds to permutations of the $n-\textcolor{black}{2}$ roots going to infinity, but also the \textcolor{black}{permutation of the} two roots of $f=t$ \textcolor{black}{belongs to $H_\epsilon$, $\epsilon\ne0$}.
\end{proof}

\begin{rema}
    More generally, for any cycle $C_0$ of $f$  and its deformed cycle $C_\epsilon$ of $f+\epsilon g$, the symmetry group $H_0$ of $C_0$ and $H_\epsilon$ of $C_\epsilon$ are related by 
    \begin{equation*}
        H_\epsilon=H_0\times S_{n-m},
    \end{equation*}
    where $S_{n-m}$ is the permutation group of $(m+1,\ldots,n)$ corresponding to permuting the roots of $f+\epsilon g=t$, which to do not correspond to any root of $f=t$ and come with coefficient $0$ in $C_\epsilon$. 
\end{rema}

\color{black}
The curve  $\Gamma_{f+\epsilon g}\subset\C P^n$ intersects $L_\infty$ at points $p_\alpha=(1:\xi^{\alpha_2}:\dots:\xi^{\alpha_m}:\cdots:\xi^{\alpha_{n}}:0)$, where $\xi=e^{\frac{2\pi i}{n}}$ and $\alpha=({\color{black}n},\alpha_2,\dots,\alpha_{n})\in Sym_n$. 
Among these points we study which belong also to $S_f$ i.e. verify the condition 
\begin{equation}\label{S_f}
 \sum_{j=1}^m n_j({\xi^{\alpha_j}})^m+  \sum_{j={m+1}}^n 0(\xi^{\alpha_j})^m=0.
\end{equation}

We want to count the points $p_\alpha$ belonging to $\Gamma_{f+\epsilon g}\cap S_f\cap L_\infty$. 
We will show that for fixed degrees $m$ and $n$ of $f$ and $g$ some such points are unavoidable. We count this number in function of $m$ and $n$. The others can be avoided for a convenient choice of the cycle $C_0$, i.e. choice of $n_1,\ldots n_m$.

\begin{lema}\label{lem:mdeg} Let $m=\deg f\leq n=\deg g$, 
\begin{itemize}
    \item[(i)] If $m$ does not divide $n$, then the set $\Gamma_{f+\epsilon g}\cap S_f\cap L_\infty$ is empty.

    \item[(ii)] If $m$ divides $n$, then there are at least $(m-1)!(n-m)!$ points in $\Gamma_{f+\epsilon g}\cap S_f\cap L_\infty$.
    Moreover, there exist cycles $C_0$, such that the above bound is exact. \footnote{Note that the hypersurface $S_f$ depends on the cycle $C$.} More precisely, this is the exact bound for cycles belonging to a complement of a finite number of hyperplanes. 
\end{itemize}

\end{lema}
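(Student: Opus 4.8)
The plan is to decide combinatorially which of the $(n-1)!$ points $p_\alpha$ of $\Gamma_{f+\epsilon g}\cap L_\infty$ lie on $S_f$, and to separate those which do so for \emph{every} cycle from those which do not. Put $d=\gcd(m,n)$. Since $\xi^{m\alpha_j}=\xi^{m\alpha_{j'}}$ precisely when $\tfrac{n}{d}\mid(\alpha_j-\alpha_{j'})$, the value $\xi^{m\alpha_j}$ depends only on $\alpha_j$ modulo $n/d$ and ranges over the $(n/d)$-th roots of unity. Call $p_\alpha$ \emph{forced} if $\xi^{m\alpha_1}=\dots=\xi^{m\alpha_m}$; then the defining equation \eqref{S_f} of $S_f$ becomes $\xi^{m\alpha_1}\sum_{j=1}^m n_j=0$, which holds identically in the weights $n_j$ since $C_0$ is a cycle. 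Hence every forced point lies in $\Gamma_{f+\epsilon g}\cap S_f\cap L_\infty$, for all cycles.

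Next I would show that for a generic cycle the non-forced points contribute nothing. If $p_\alpha$ is not forced but satisfies \eqref{S_f}, the complex linear form $n\mapsto\sum_{j=1}^m n_j\xi^{m\alpha_j}$ vanishes on the whole cycle space $W=\{\,\sum_{j=1}^m n_j=0\,\}\subset\R^m$; separating real and imaginary parts, this forces the real vectors $(\Re\xi^{m\alpha_j})_j$ and $(\Im\xi^{m\alpha_j})_j$ to be orthogonal to $W$, hence proportional to $(1,\dots,1)$, and since $|\xi^{m\alpha_j}|=1$ this makes all $\xi^{m\alpha_j}$ equal --- contradicting non-forcedness. Thus each non-forced point meeting $S_f$ cuts out a \emph{proper} subspace of $W$; there being only finitely many $\alpha$, the cycles carrying some non-forced point at infinity form a finite union of proper subspaces of $W$ (each contained in a hyperplane), which cannot exhaust the full-rank lattice of integer cycles. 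For a cycle outside this union --- \emph{generic}, in the sense of Lemma \ref{noinf} and Remark \ref{gen} --- the set $\Gamma_{f+\epsilon g}\cap S_f\cap L_\infty$ consists of forced points only.

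It remains to count forced points, in the representative $\alpha=(n,\alpha_2,\dots,\alpha_n)$. Being forced means $\alpha_1,\dots,\alpha_m$ all lie in the class of $0$ modulo $n/d$, i.e. are multiples of $n/d$, and $\{1,\dots,n\}$ contains exactly $d$ such multiples. If $m\nmid n$ then $d<m$, so $m$ distinct such indices cannot be chosen: there are no forced points, hence (for a generic cycle) $\Gamma_{f+\epsilon g}\cap S_f\cap L_\infty=\emptyset$, which is (i). If $m\mid n$, set $k=n/m$; then $d=m$ and the multiples of $k$ in $\{1,\dots,n\}$ are exactly $k,2k,\dots,mk=n$, so a forced point must have $\{\alpha_1,\dots,\alpha_m\}=\{k,2k,\dots,n\}$; since $\alpha_1=n$ is fixed there remain $(m-1)!$ orderings of $\{k,\dots,(m-1)k\}$ in slots $2,\dots,m$ and $(n-m)!$ orderings of the other indices in slots $m+1,\dots,n$, giving exactly $(m-1)!\,(n-m)!$ distinct forced points. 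These occur for every cycle (the lower bound of (ii)), and by the second paragraph a generic cycle has only these, so the bound is sharp on a complement of finitely many hyperplanes. (Observe $(m-1)!=\deg\Gamma_f$ and, for $m>2$, $(n-m)!=|H_\epsilon|$, cf. Lemmas \ref{degGamma} and \ref{lem:sym}.)

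The one genuinely delicate point is the linear-algebra step of the second paragraph --- that a non-forced vanishing is not a consequence of the cycle relation; the remainder is bookkeeping. It is worth recording that (i) really does need the genericity of $C_0$: for a \emph{symmetric} cycle non-forced points of $\Gamma_{f+\epsilon g}$ on $S_f$ at infinity may persist even when $m\nmid n$ (for instance $\deg f=4$, $\deg g=6$, $C_0=z_1-z_2+z_3-z_4$), in exact analogy with the remark following Lemma \ref{noinf}. Finally, when $m=2$ the cycle is simple and $S_f=\Delta_{12}\cup\tilde S_f$ as in Lemma \ref{lem:irreducibility}, but $\Gamma_{f+\epsilon g}\cap L_\infty$ has pairwise distinct coordinates and so misses the diagonal $\Delta_{12}$; the count above then gives $(n-2)!$ forced points for $n$ even and none for $n$ odd, consistent with the formula.
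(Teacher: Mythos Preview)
Your proof is correct and follows essentially the same approach as the paper's: identify the points $p_\alpha$ with $\xi^{m\alpha_j}=1$ for all $j\le m$ as those lying on $S_f$ for \emph{every} cycle, count them, and argue that the remaining points impose nontrivial linear conditions on the weights which a generic cycle avoids. Your second paragraph spells out a step the paper leaves implicit --- that a non-forced vanishing is not a consequence of the cycle relation $\sum n_j=0$ --- and your closing observation that (i) genuinely needs the genericity of $C_0$ (with the explicit witness $m=4$, $n=6$, $C_0=z_1-z_2+z_3-z_4$) is correct and highlights a slight imprecision in the paper's statement, which is indeed meant for a regular-at-infinity cycle only.
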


\begin{proof}

Note first that if 
\begin{equation}\label{m-deg}
\xi^{\alpha_j m}=1, 
\end{equation}
for $j=1,\ldots,m,$
then by the cycle condition equation \eqref{S_f} will be satisfied. 

Note that there are $d=\gcd(m,n)$ roots of $z^n=1$ of order dividing $m$, i.e. distinct solutions $\xi^{\alpha_j}$ of \eqref{m-deg}. 
We want to have $m$ distinct solutions of $\eqref{m-deg}$ That is $d=m$ i.e. $m$ divides $n$.
This means that we have $m$ distinct solutions $\xi^{\alpha_j}$, $j=1,\ldots,m$, of \eqref{m-deg}, 
if and only if $m$ divides $n$.
If $m$ divides $n$, we form points $p_\alpha=(1:\xi^{\alpha_2}:\cdots:\xi^{\alpha_m}:\cdots:\xi^{\alpha_n}:0)$ in $\Gamma_{f+\epsilon g}\cap S_f\cap L_\infty$. Permuting the first $m-1$ coordinates or the last $n-m$ coordinates gives a solution. 

Hence, for any cycle $C$, if  $m$ divides $n$, there are $(m-1)!(n-m)!$  points of this type and no points, if $m$ does not divide $n$.
One can avoid having other points, by choosing cycles $C_0$ (i.e. the set $S_f$)  so that conditions \eqref{S_f} is not verified for any $\alpha=(n,\alpha_2,\ldots,\alpha_n)$, which does not correspond to the already found solutions (see \eqref{m-deg}).

\end{proof}
In the case $m=2$, we need a version of the above Lemma \ref{lem:mdeg}, for $m=2$, but with the reduced zero-hypersurface $\tilde S_f$. 

\begin{lema}\label{lem:m=2} Let $m=2=\deg f\leq n=\deg g$, 
\begin{itemize}
    \item[(i)] If $n$ is  odd, then the set $\Gamma_{f+\epsilon g}\cap \tilde{S}_f\cap L_\infty$ is empty. 

    \item[(ii)] If $n$ is even, then there are at least $(n-m)!$ points in $\Gamma_{f+\epsilon g}\cap \tilde{S}_f\cap L_\infty$.
    Moreover, there exist cycles $C_0$, such that the above bound is attained.  More precisely, this is the exact bound for cycles belonging to a complement of a finite number of hyperplanes. 
\end{itemize}

\end{lema}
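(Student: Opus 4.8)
The plan is to run the argument of Lemma~\ref{lem:mdeg} in the degenerate case $m=2$, replacing the zero hypersurface $S_f$ by its reduced version $\tilde S_f$, and then to read off the dichotomy from a parity count for $n$-th roots of unity.

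First I would record the reduced hypersurface. For $m=2$ any cycle of $f$ is the simple cycle $C_0=z_1-z_2$ (up to a nonzero integer factor, irrelevant for every algebraic set below), and its deformed cycle is $C_\epsilon=z_1-z_2$ completed by the weights $n_3=\dots=n_n=0$. Hence $G=\sum_{j=1}^n n_jf(z_j)=f(z_1)-f(z_2)$, and Lemma~\ref{lem:irreducibility}(iii), applied with $f$ in the role of $g$, yields $S_f=\Delta_{12}\cup\tilde S_f$ where $\tilde S_f=\{\tilde f_{12}=0\}\subset\CP^n$ and $\tilde f_{12}=\frac{f(z_1)-f(z_2)}{z_1-z_2}$ has degree $m-1=1$. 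Writing $f(z)=az^2+bz+c$ with $a\ne0$, one has $\tilde f_{12}=a(z_1+z_2)+b$, so $\tilde S_f$ is a hyperplane of $\CP^n$ whose intersection with $L_\infty$ is the $(n-2)$-plane $\{z_1+z_2=0\}\cap L_\infty$.

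Next I would feed in the description of $\Gamma_{f+\epsilon g}$ at infinity already obtained above: since $n\ge m$ and $\epsilon\ne0$ (avoiding the single exceptional value of $\epsilon$ when $n=m$) we have $\deg(f+\epsilon g)=n$, so $\Gamma_{f+\epsilon g}\subset\CP^n$ and $\Gamma_{f+\epsilon g}\cap L_\infty$ is exactly the set of $(n-1)!$ points $p_\alpha=(1:\xi^{\alpha_2}:\dots:\xi^{\alpha_n}:0)$, $\xi=e^{2\pi i/n}$, with $(\alpha_2,\dots,\alpha_n)$ ranging over the permutations of $(1,\dots,n-1)$. Comparing with the first step, $p_\alpha\in\tilde S_f$ if and only if $1+\xi^{\alpha_2}=0$, i.e.\ $\xi^{\alpha_2}=-1$.

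Finally the parity count closes the proof. If $n$ is odd, $-1$ is not an $n$-th root of unity, so no $p_\alpha$ lies on $\tilde S_f$ and $\Gamma_{f+\epsilon g}\cap\tilde S_f\cap L_\infty=\emptyset$, which is (i). If $n$ is even, the only admissible index is $\alpha_2=n/2$, and then $(\alpha_3,\dots,\alpha_n)$ runs over all permutations of the remaining $n-2$ values, so $\Gamma_{f+\epsilon g}\cap\tilde S_f\cap L_\infty$ consists of exactly $(n-2)!=(n-m)!$ points; this proves (ii), and since for $m=2$ the hypersurface $\tilde S_f$ is independent of the (essentially unique) cycle, the bound is automatically attained with no genericity on $C_0$ needed. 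The one point requiring care is the bookkeeping of coordinate slots: throughout the limit $t\to\infty$ one must keep fixed which pair of coordinates of $\CP^n$ carries the two roots of $f=t$ entering the cycle, so that the equation cutting out $\tilde S_f\cap L_\infty$ and the coordinates of the $p_\alpha$ refer to the same positions; once this is pinned down the count is immediate and no input beyond Lemmas~\ref{degGamma} and~\ref{lem:irreducibility} is required.
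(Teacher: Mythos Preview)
Your proof is correct and follows essentially the same approach as the paper: identify $\tilde S_f\cap L_\infty$ with the hyperplane $\{z_1+z_2=0\}$, describe $\Gamma_{f+\epsilon g}\cap L_\infty$ via $n$-th roots of unity, and reduce everything to the solvability of $1+\xi^{\alpha_2}=0$. Your version is more explicit (you write out $\tilde f_{12}$ for a general quadratic and spell out the parity count), and your closing remark on the coordinate bookkeeping and on the triviality of the genericity condition for $m=2$ is a useful clarification the paper leaves implicit.
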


\begin{proof}
 The proof follows the same general lines as the proof of the previous Lemma. Only at infinity we work with $f(z)=z^2$ and we consider the
 \emph{ reduced} zero hypersurface $\tilde{S}_f$ at infinity. It is simply given by the equation $z_1+z_2=0$. Denote by $\tilde{p}_\alpha=(1:\xi^{\alpha_2}:\cdots:\xi^{\alpha_{n}}:0)$  the points belonging to 
$\Gamma_{f+\epsilon g}\cap \tilde{S_f}\cap L_\infty$. In homogenized coordinates, we put $\xi^{\alpha_1}=1$ and end up with the equation $1+\xi^{\alpha_2}=0$. The result then follows as in the proof of Lemma \ref{lem:mdeg}.
\end{proof}

\begin{proof} [Proof of Proposition \ref{n>m}]

Take $2<m\leq n$. We have to take into account the intersections points at infinity, as well as the fact that each root of the displacement function $\Delta$ corresponds to $k$ $m$-tuples $(z_1,\ldots,z_m)$, where $k$ is the order or the symmetry group $H$ of $C_\epsilon$

\color{black}
Assume moreover that  $m$ does not divide $n$. Then $\deg \Gamma_{f+\epsilon g}=(n-1)!$ and $\deg S_f=m$. By Bezout's theorem,
they intersect in $m (n-1)!$ points.
By Lemma~\ref{lem:sym}, the symmetry group $H$ of $C_\epsilon$ has order $(n-m)!$. 
This proves the bound (i)(a).

Next, by a convenient choice of the cycle $C_0$ (see Lemma \ref{lem:mdeg}), there are no points at infinity that we have to subtract. 

Moreover, for each level set $t$, we obtain $(n-m)!$ points in the intersection, so the number of solutions $t$ is $\frac{m(n-1)!}{(n-m)!}$. Thus showing (i)(b).

Next, if   $m$ divides $n$, then we need to remove the intersections at infinity using Lemma \ref{lem:mdeg}. We obtain
\[
Z_\Delta(f,g,C) \leq \frac{ m(n-1)!-(m-1)!(n-m)!}{(n-m)!}=
\frac{ m(n-1)!}{(n-m)!}-(m-1)!.\] 

The realization of the bound follows from Lemmas \ref{lem:sym} and \ref{lem:mdeg}.

\color{black}

\color{black}
Suppose $m=2\leq n$. Then, in order to prove (ii), one notes that in this case $\deg(f+\epsilon g)=n$, and $\deg \Gamma_{f+\epsilon g}=(n-1)!$. One counts the intersections of the connection curve $\Gamma_{f+\epsilon g}$ with the  zero hypersurface $\tilde S_f$. Note that $\deg  \tilde{S}_f=1$ and the order of the symmetry group $H$ of a simple cycle is $2(n-2)!$. 

We consider first the case $n$ odd i.e. $m$ does not divide $n$. We apply Lemma \ref{lem:m=2}. It says that there are no points of intersection of $\Gamma_{f+\epsilon g}\cap \tilde{S}_g\cap L_\infty$. Hence, we get the bound 
 $Z_\Delta(f,g,C)\leq \frac{(n-1)!}{2(n-2)!}=\frac{n-1}{2}.$

If $n$ is even, then as previously, we have $\deg(f+\epsilon g)=n$, and $\deg \Gamma_{f+\epsilon g}=(n-1)!$, the order of the symmetry group $H$ is $2(n-2)!$. However, now we have $(n-2)!$ points at infinity (by Lemma \ref{lem:m=2}), which we have to subtract before dividing by the order of the symmetry group.
We get $Z_\Delta(f,g,C)\leq \frac{(n-1)!-(n-2)!}{2(n-2)!}=\frac{n-2}{2}$. The above bounds are exact by Lemmas \ref{lem:sym} and \ref{lem:m=2}. These bounds give the common optimal bound and prove (ii).

\end{proof}

The same degeneracies as in the tangential problem force a smaller number of solutions in exceptional cases. But it is possible that the tangential problem has some degeneracy while the infinitesimal problem does not have that degeneracy, so the number of solutions of the infinitesimal problem might be strictly greater than the number of solutions of the tangential problem. This difference gives the alien limit cycles. 

\medskip

{
For instance, the difference of the number of points on $L_\infty$ is considered in Theorem~\ref{theo:C}. Theorem~\ref{theo:C} follows directly by comparison from Theorems~\ref{theo:A} and~\ref{theo:B}.
The remaining case of $m=n$ and a generic $g$ reduces to the case $n=m-1$ by Lemma~\ref{lem:reduction}, giving the same bound $(m-1)!$}

\medskip

Next, we show an example where all the points of $\Gamma_{f}\cap S_g$ lie on the diagonals $\Delta_{ij}$, but this does not occur for the points of
$\Gamma_{f+\epsilon g}\cap S_f$, so there appear alien limit cycles.

\begin{exam}
Consider $f(z)=z^3+z^2$, $g(z)=3z^2+z$, and the cycle $C(t) = z_1(t)+z_2(t)-2z_3(t)$.
Then,
\[\Gamma_f = \{(z_1,z_2,z_3)\colon z_1+z_2 + z_1^2+z_1z_2+z_2^2=0,\ z_1+z_2+z_3+1=0\},\]
\[S_g = \{(z_1,z_2,z_3)\colon z_1 + z_2 - 2 z_3 + 3 (z_1^2 + z_2^2 - 2 z_3^2)=0\}.\]

By Theorem~\ref{theo:A}, the maximum number of zeros of $Z_1(f,g,C)$ is $4$, but a direct computation shows that indeed $Z_1(f,g,C)=0$, as the intersection of $\Gamma_f$ and $S_g$ is contained in the union of the diagonals, $\Delta_{12}\cup \Delta_{23}\cup\Delta_{13}$.


\medskip 

Now, consider the infinitesimal version of the problem.
Compute the sets $\Gamma_{f+\epsilon g}$ and $S_f$, which in this case depend on $\epsilon$, giving
\[
\Gamma_{f+\epsilon g} =\{z_1 + z_2 + z_1^2  + z_1 z_2 + z_2^2 + \epsilon(1 + 3 z_1  + 
   3 z_2 ) = 0, 1 + 3 \epsilon + z_1 + z_2 + z_3 = 0\},
\]
\[
S_f=\{z_1^2 + z_2^2 + z_1^3  + z_2^3 - 2 (z_3^2 + z_3^3)\}.
\]
By a computer algebra system, it can be shown that the intersections of the two varieties consists of four points. 
For $\epsilon\neq 0,1/3,(1\pm \sqrt{3})/6$, the coordinates are different, so they do not correspond to singular points of $f+\epsilon g$. Finally, note that we obtain two solutions of $\int_{C_\epsilon(t)} f$, as there is a symmetry in the coefficients of $z_1,z_2$ of the cycle, which implies that points of the intersection of $\Gamma_{f+\epsilon g}$ and $S_f$ have the same symmetry.

{ As in the tangential problem there are no solutions, it follows that the two solutions of the infinitesimal problem, are alien limit cycles.}
\end{exam}


\section{Tangential problem in the exceptional cases}
In  Theorem~\ref{theo:A} we showed that the upper bound for the number $Z_1$ of solutions of the tangential problem is attained for generic $f$ and for generic cycle $C$. 
In this section we study what happens if some of these conditions are 
violated.

Essentially, there are four degeneracies that force a smaller number of solutions:
\begin{enumerate}
\item Several points in $\Gamma_f \cap S_g$ with the same $t$-value.
\item Points of $\Gamma_f\cap S_g$ on $L_\infty$.
\item Points of $\Gamma_f\cap S_g$ on the diagonals $\Delta_{ij}$.
\item Points of $\Gamma_f\cap S_g$ with multiplicity greater than one.
\end{enumerate}

Next, we consider some cases with these degeneracies.

\subsection{Case of more than one point in $\Gamma_f \cap S_g$ corresponding to the same generic value of $t$.}

To each cycle $C(t)=\sum_{i=1}^m n_i z_i(t)$, we associate the tuple $c=(n_1,\ldots,n_m)$, of weights. 
Genericity of a cycle $C$ is expressed in terms of the genericity of the tuple $c$.

Using Vieta mapping  $V$ given in \eqref{Vieta}
one can formulate the  counting of the number of zeros  of the first Melnikov function $M_1(t)$,
as the number of intersections of $V( S_g)$ with the line 
$L_f=\pi_{m-1}^{-1}({\sigma^{m-1}})$.

For generic $ g,C$, the Vieta map $V:S_g\to V(S_g)$ is generically one-to-one, and therefore, for a generic $\sigma^{m-1}$, the points of intersection $L_f$ with $V( S_g)$ are in one-to-one correspondence with the points of intersection of $\Gamma_f$ with $S_g$.

However, if the map $V:S_g\to V(S_g)$ is generically $k$-to one, then   for generic $f$ we have 
\begin{equation*}
   \# (\Gamma_f\cap S_g)=k\cdot\#(L_f\cap V( S_g)).
\end{equation*}
We compute this $k$ for generic $g$ in terms of symmetries of the tuple $c=(n_1,\dots,n_m)$:

\begin{lema}
    Let $H\subset Sym_m$ be the subgroup of $Sym_m$ preserving the tuple $c$ up to sign.
    Then $k=|H|$. 
\end{lema}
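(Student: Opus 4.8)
The plan is to identify the generic fiber of the map $V\colon S_g\to V(S_g)$ with an orbit of a group action, and then to show that this group is exactly $H$. First I would recall that the Vieta map $V\colon\C^m\to\C^m$ is itself a branched cover of degree $m!$, with the symmetric group $Sym_m$ acting by permuting the coordinates $z_1,\dots,z_m$, and $V(z)=V(z')$ if and only if $z'=\tau(z)$ for some $\tau\in Sym_m$. So for a point $p\in S_g$ lying over a generic point $q=V(p)\in V(S_g)$, the fiber $V^{-1}(q)$ consists of the $m!$ points $\tau(p)$, $\tau\in Sym_m$ (genericity ensures $p$ has trivial stabilizer, i.e. all $z_i(p)$ distinct — this is where we use that $p\notin\bigcup\Delta_{ij}$, which holds for generic points of $S_g$ by Lemma~\ref{lem:irreducibility}). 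Among these $m!$ preimages of $q$, the ones lying on $S_g$ are precisely those $\tau(p)$ for which $G(\tau(p))=\sum_j n_j g(z_{\tau^{-1}(j)}(p))=0$.

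The key step is then to show that for \emph{generic} $g$, the condition "$\tau(p)\in S_g$ for a generic $p\in S_g$" is equivalent to "$\tau\in H$". One direction is immediate: if $\tau\in H$ then either $G\circ\tau=G$ or $G\circ\tau=-G$ as polynomials (since permuting the $z_j$'s by $\tau$ sends the weight tuple $c$ to $\pm c$), so $\tau(p)\in S_g=\{G=0\}$ for every $p\in S_g$, in particular for generic $p$. For the converse, suppose $\tau\notin H$; I want to show that $S_g\cap\tau(S_g)=\{G=0\}\cap\{G\circ\tau^{-1}=0\}$ is a proper subvariety of $S_g$, so that a generic $p\in S_g$ does not satisfy $\tau(p)\in S_g$. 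Since $S_g$ is irreducible of degree $n$ (Lemma~\ref{lem:irreducibility}(i), in the non-simple case; the simple case $m=2$ is degenerate and excluded here anyway, or handled with $\tilde S_g$), it suffices to show the hypersurface $\{G\circ\tau^{-1}=0\}$ does not contain $S_g$, equivalently that $G$ does not divide $G\circ\tau^{-1}$; since both have the same degree $n$, this amounts to $G$ and $G\circ\tau^{-1}$ not being proportional, i.e. $G\circ\tau^{-1}\ne\lambda G$ for all $\lambda\in\C$. Comparing: $G\circ\tau^{-1}=\sum_j n_{\tau(j)}g(z_j)$ has "weight tuple" $\tau^{-1}(c)$, while $\lambda G$ has weight tuple $\lambda c$; if $g$ is generic (so that the functions $z\mapsto g(z_j)$ are linearly independent as $j$ varies — indeed $g$ of degree $n\ge 1$ suffices for this, as $g(z_j)$ are algebraically independent for distinct variables), the equality $G\circ\tau^{-1}=\lambda G$ forces $\tau^{-1}(c)=\lambda c$, hence $\lambda=\pm1$ by comparing norms of the integer tuples, hence $\tau\in H$, contradiction.

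Combining the two directions: for generic $g$ and generic $p\in S_g$, the preimages of $V(p)$ lying on $S_g$ are exactly $\{\tau(p):\tau\in H\}$, and these are distinct since $p$ has trivial coordinate-stabilizer. Hence the generic fiber of $V\colon S_g\to V(S_g)$ has cardinality $|H|$, i.e. $k=|H|$. I would finally remark that the genericity of $\sigma^{m-1}$ (equivalently, of $f$) is used only to guarantee that the line $L_f$ meets $V(S_g)$ transversally in the smooth locus away from the branch locus of $V|_{S_g}$, so that each of the $\#(L_f\cap V(S_g))$ intersection points indeed pulls back to a full fiber of $k$ distinct points of $\Gamma_f\cap S_g$; this is the statement in the line preceding the lemma.

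The main obstacle is the converse direction of the key step — controlling, for \emph{generic} $g$ (not all $g$), exactly when two permuted copies of the zero hypersurface coincide or contain one another. The argument above reduces it to the linear independence of $\{g(z_1),\dots,g(z_m)\}$ over $\C$, which is clear, plus the elementary observation that a permutation scaling an integer tuple by $\lambda$ must have $\lambda=\pm1$; so the obstacle is more bookkeeping than genuine difficulty, provided one is careful that "generic $g$" is interpreted in the Zariski sense and that the finitely many bad permutations $\tau\notin H$ each cut out only a proper subvariety in $g$-space.
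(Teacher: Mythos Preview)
Your proof is correct and follows essentially the same approach as the paper's: both identify the fiber of $V|_{S_g}$ over a generic point with the set of permutations $\tau$ such that $\tau(S_g)=S_g$, and then use irreducibility of $S_g$ together with the linear independence of the $g(z_j)$ to show this stabilizer equals $H$. The paper packages the first step via a continuity argument (the set $H_{\sigma,p_1}=\{\alpha:\alpha(p_1)\in S_g\}$ is locally constant in $p_1$, hence depends only on the irreducible component of $S_g$), whereas you go directly to comparing the defining polynomials $G$ and $G\circ\tau^{-1}$, but the content is the same.
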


\begin{proof}
    For a generic point $\sigma\in \C^m$ the set $V^{-1}(\sigma)$ consists of  $m!$ points $p_i$ with pairwise different coordinates which are roots of the polynomial $f_\sigma(z)$. The group $Sym_m$ acts freely and transitively on $V^{-1}(\sigma)$ by permutations of coordinates. By choosing a point $p_1\in V^{-1}(\sigma)$ we can write $V^{-1}(\sigma)=Sym_m(p_1)$.

    Assume now that $\sigma\in V(S_g)$ is generic and let $\{p_1,...p_k\}=V^{-1}(\sigma)\cap S_g$ be the $k$ points   of $S_g$ sent to $\sigma$ by $V$. 
    Let 
    $$
    H=H_{\sigma, p_1}=\{\alpha\in Sym_m  \,| \, \alpha(p_1)\in S_g\}\subset Sym_m
    $$ be the set of permutations corresponding to $V^{-1}(\sigma)\cap S_g$. Clearly, the set $H$ depends continuously on the choice of $\sigma, p_1$, so is  locally constant near $p_1$.
Therefore, the group $H$ depends on the irreducible component of $S_g$ containing $p_1$ only.

    The group $Sym_m$ acts on $\C^m$. 
    By the above definition, for any $\alpha\in H$ the surface $\alpha(S_g)$ coincides with $S_g$
    locally near $\alpha(p_1)$. If $C$ is not a simple cycle, then $S_g$ is irreducible, which implies that $\alpha (S_g)=S_g$. If $C$ is a simple cycle, $C=z_i-z_j$, then $S_g$ is a union of a hypersurface $\tilde{S}_g$ and the hyperplane $\Delta_{ij}$, and similarly $\alpha(\tilde{S}_g)=\tilde{S}_g$, $\alpha(\Delta_{ij})=\Delta_{ij}$, see Theorem~\ref{th:simpletang}.

     Thus, $H$ is the subgroup of the group $Stab_{S_g}\subset Sym_m$ preserving $S_g$. 
    The opposite inclusion is trivially true: if $\alpha\in Stab_{S_g}$, then clearly $\alpha(p_1)\in Sym_m(p_1)\cap S_g$, i.e. $\alpha\in H$. Thus, $H=Stab_{S_g}$.

    Now, the action of $Sym_m$ on $S_g=\{\sum n_{j}g(z_j)=0\}$ reduces to the action of $Sym_m$ on the tuple $c$ by permutation of coordinates: if $\alpha=(\alpha_1,\dots,\alpha_m)\in Sym_m$ then 
    $\alpha(S_g)=\{\sum n_{\alpha_j}g(z_j)=0\}$. The equality $\alpha(S_g)=S_g$ means that their defining equations are proportional, which happens  if and only if the tuples $c$ and $\alpha(c)=\left(n_{\alpha_1},\dots,n_{\alpha_m}\right)$  coincide up to  sign.
\end{proof}
\begin{exam}
    For $c=(1,-1,1,-1, 0,...,0)$, the subgroup $H$ is generated by permutations of the last $m-4$ entries, and by the four permutations $(1)(2)(3)(4)$, $(13)(24)$, $(12)(34)$ and $(14)(23)$. Thus $|H|=4(m-4)!$

On the other side, for such $c$ and generic $f,g$ we have $\# (\Gamma_f\cap S_g)\le n(m-1)!$, so $Z_1(f,g,C)\le \frac{n(m-1)(m-2)(m-3)}{4}$.
    
\end{exam}

\subsection{Simple cycles}
Several exceptional phenomena occur in the case of simple cycles. Here we calculate the number of zeros of abelian integrals $Z_1(m,n,S)$ along simple cycles, for $\deg f=m$, $\deg g=n$.

\begin{lema}\label{lem:simple}
  For generic $f,g$   and a simple cycle $C$ the number   of points in  $\Gamma_f\cap S_g$ lying outside $\cup\Delta_{ij}$ is  at most $(n-1)(m-1)!$

\end{lema}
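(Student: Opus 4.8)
\textbf{Proof plan for Lemma~\ref{lem:simple}.}

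The plan is to reduce the count to a Bezout estimate for the \emph{reduced} zero hypersurface $\tilde S_g$, exactly parallel to the argument that established part~(i) of Theorem~\ref{theo:A}, but now taking advantage of the splitting $S_g = \Delta_{ij}\cup\tilde S_g$ from Lemma~\ref{lem:irreducibility}(iii). First I would recall that for a simple cycle $C = z_i - z_j$ one has $M_1(t) = -\bigl(g(z_i(t)) - g(z_j(t))\bigr)$, and that the zeros we want to count are the values $t$ at which $\tilde g_{ij}(z_i(t),z_j(t)) = 0$ with $z_i(t)\ne z_j(t)$, i.e. the intersection points of $\Gamma_f$ with $\tilde S_g$ lying outside $\cup\Delta_{k\ell}$. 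Since $\deg\tilde S_g = n-1$ (Lemma~\ref{lem:irreducibility}(iii)) and $\deg\Gamma_f = (m-1)!$ (Lemma~\ref{degGamma}), Bezout's theorem gives that the total number of intersection points of $\Gamma_f$ with $\tilde S_g$ in $\CP^m$, counted with multiplicity and including points at infinity, is at most $(n-1)(m-1)!$. This already yields the stated bound once we argue that discarding the points of $\Gamma_f\cap\Delta_{ij}$ — which no longer lie on $\tilde S_g$ for generic $g$ — and the points at infinity can only decrease the count, which is automatic.

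The one point needing care is that, for the bound to be meaningful, $\Gamma_f$ must not be a component of $\tilde S_g$ (otherwise Bezout does not apply). Here I would invoke genericity of $g$: if $\Gamma_f\subset\tilde S_g$, then $\tilde g_{ij}$ vanishes identically along the connection curve, which forces $g(z_i(t)) \equiv g(z_j(t))$ for all $t$, i.e. $M_1\equiv 0$; by the solution of the tangential center problem (Theorem~2.2 of \cite{ABM}, or Proposition~3.1 of \cite{GP}) this happens only when $g = P(f)$ for some polynomial $P$, which is a non-generic condition on $g$. Thus for generic $g$ the intersection $\Gamma_f\cap\tilde S_g$ is proper and Bezout applies, giving at most $(n-1)(m-1)!$ intersection points. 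Restricting to those outside $\cup\Delta_{k\ell}$ can only lower the number, so the lemma follows.

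The main obstacle — really the only subtlety — is ensuring that the separation $S_g = \Delta_{ij}\cup\tilde S_g$ genuinely gains us a unit in the degree even after one has to pass to the \emph{reduced} object and check that no spurious components or excess intersection with the diagonals remain. This is handled by Lemma~\ref{lem:irreducibility}(iii), which tells us $\tilde S_g$ is a single hypersurface of degree exactly $n-1$, together with Lemma~\ref{trans}-style reasoning: for generic $f,g$ the finitely many diagonal points of $\Gamma_f\cap\Delta_{k\ell}$ can be pushed off $\tilde S_g$, so that the $(n-1)(m-1)!$ Bezout count, minus the contributions at infinity and on the diagonals, bounds the number of genuine zeros. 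Since we only claim an upper bound here (optimality will be treated separately, presumably via Lemma~\ref{lem:m=2} and a count of the unavoidable points at infinity analogous to Lemma~\ref{lem:mdeg}), no lower-bound construction is needed at this stage.
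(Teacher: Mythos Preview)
Your proposal is correct and follows essentially the same approach as the paper: reduce from $S_g$ to the degree-$(n-1)$ hypersurface $\tilde S_g$ via Lemma~\ref{lem:irreducibility}(iii) and apply Bezout with $\deg\Gamma_f=(m-1)!$. The paper's proof is in fact more terse than yours---it does not spell out the properness check that you justify via the tangential center problem---so your write-up is, if anything, a slightly more careful version of the same argument.
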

\begin{proof}
     For generic $f,g$ the intersection $\Gamma_f\cap \tilde{S}_g$, where $\tilde{S}_g$ is defined in Lemma~\ref{lem:irreducibility}, is disjoint from $\cup\Delta_{ij}$. Thus the number of points in $\Gamma_f\cap \tilde{S}_g$ counted with multiplicities  and outside of $\cup\Delta_{ij}$ is given by the Bezout bound $(n-1)(m-1)!$
\end{proof}
\begin{lema}\label{lem:simple infinity}
Let $f$ be generic and $C$ a simple cycle of $f$.
    \begin{itemize}
        \item [(i)] For $m,n$ coprime the intersection $\Gamma_f\cap \tilde{S}_g$ is disjoint from $L_\infty$.
        \item [(ii)] Denote $\gcd(m,n)=d$. Then $\Gamma_f\cap \tilde{S}_g\cap L_\infty$ consists of $(d-1)(m-2)!$ points.
    \end{itemize}
\end{lema}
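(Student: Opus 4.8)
The plan is to analyze the intersection $\Gamma_f \cap \tilde S_g \cap L_\infty$ exactly as in Lemma~\ref{degGamma} and Lemma~\ref{noinf}, but now keeping track of the \emph{reduced} zero hypersurface $\tilde S_g = \{\tilde g_{ij} = 0\}$ from part~(iii) of Lemma~\ref{lem:irreducibility}. Recall that for a simple cycle $C = z_i - z_j$ (we may take $i=1$, $j=2$ after relabelling), the points of $\Gamma_f$ at infinity are exactly the $(m-1)!$ points $p_\alpha = (\xi^{\alpha_1} : \cdots : \xi^{\alpha_{m-1}} : 1 : 0)$ with $\xi = e^{2\pi i/m}$ and $\alpha \in \mathrm{Stab}_m$, and the intersection with $L_\infty$ is transversal. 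So the only question is: for which $\alpha$ does $p_\alpha$ lie on $\tilde S_g$?

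First I would identify the defining equation of $\tilde S_g$ at infinity. Writing $g(z) = \sum_{k} a_k z^k$ with $a_n \neq 0$, the restriction of $\tilde g_{12} = \frac{g(z_1) - g(z_2)}{z_1 - z_2}$ to $L_\infty$ is the degree-$(n-1)$ homogeneous form obtained from the top-degree term, namely $\tilde g_{12}|_{L_\infty} = a_n \frac{z_1^n - z_2^n}{z_1 - z_2} = a_n (z_1^{n-1} + z_1^{n-2} z_2 + \cdots + z_2^{n-1})$. Evaluating at $p_\alpha$ (after the usual rescaling making the $m$-th coordinate $1$, which plays no role since $z_1, z_2$ are among the first two coordinates when $m > 2$), we get $p_\alpha \in \tilde S_g$ iff $\sum_{\ell=0}^{n-1} \xi^{\alpha_1 \ell} \xi^{\alpha_2 (n-1-\ell)} = 0$, i.e. iff $\frac{\xi^{n\alpha_1} - \xi^{n\alpha_2}}{\xi^{\alpha_1} - \xi^{\alpha_2}} = 0$. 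Since $\xi^{\alpha_1} \neq \xi^{\alpha_2}$ for distinct indices $\alpha_1, \alpha_2$, this is equivalent to $\xi^{n\alpha_1} = \xi^{n\alpha_2}$, that is, $n(\alpha_1 - \alpha_2) \equiv 0 \pmod m$.

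Next I would count the $\alpha \in \mathrm{Stab}_m$ satisfying this congruence. The condition $n(\alpha_1 - \alpha_2) \equiv 0 \pmod m$ says $\alpha_1 \equiv \alpha_2 \pmod{m/d}$ where $d = \gcd(m,n)$ — equivalently $\xi^{\alpha_1}$ and $\xi^{\alpha_2}$ lie in the same coset of the subgroup of $d$-th... more precisely, among the $m$ residues modulo $m$, those congruent mod $m/d$ form $m/d$ classes each of size $d$. We need $\alpha_1 \not\equiv \alpha_2 \pmod m$ (so the two roots are genuinely distinct, i.e.\ we are off the diagonal $\Delta_{12}$, as guaranteed by the generic position in Lemma~\ref{lem:simple}) but $\alpha_1 \equiv \alpha_2 \pmod{m/d}$: this is possible only if $d > 1$, and then the number of ordered pairs $(\alpha_1, \alpha_2)$ of distinct residues in a common class is $d(d-1)$, over the $m/d$ classes, giving $\frac{m}{d} \cdot d(d-1) = m(d-1)$ ordered pairs; but within $\mathrm{Stab}_m$ the values of $\alpha_1, \alpha_2$ range over all of $\{1,\dots,m-1\}$ together with the remaining $m-3$ coordinates $\alpha_3,\dots,\alpha_{m-1}$ being an arbitrary permutation of the complement. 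Here I would be careful to recount cleanly: fix the unordered pair of \emph{positions} $\{1,2\}$; the number of choices of the ordered pair of values $(\alpha_1,\alpha_2)$ with $\alpha_1 \equiv \alpha_2 \pmod{m/d}$ and $\alpha_1 \neq \alpha_2$, out of the $m$ available values $\{1,\dots,m\}$ (wait — in $\mathrm{Stab}_m$ the value $m$ is forced into position $m$, so $\alpha_1,\alpha_2 \in \{1,\dots,m-1\}$). So among $\{1,\dots,m-1\}$ together with the class of $m$: the residue $m \equiv 0$ lies in one of the $m/d$ classes; that class contributes $d-1$ usable values to $\{1,\dots,m-1\}$, the other classes contribute $d$ each. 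After picking $\alpha_1$ (any of $m-1$ values) the number of valid $\alpha_2 \neq \alpha_1$ in its class is $d-1$ if $\alpha_1$'s class is not that of $0$ and $d-2$ otherwise... this bookkeeping telescopes, and the clean way is: total ordered distinct pairs in $\{0,1,\dots,m-1\}$ sharing a class is $\frac md d(d-1) = m(d-1)$; subtract those involving the value $0$, namely $2(d-1)$; so distinct ordered pairs from $\{1,\dots,m-1\}$ is $m(d-1) - 2(d-1) = (m-2)(d-1)$. Finally the remaining $m-3$ positions $\alpha_3,\dots,\alpha_{m-1}$ must be filled bijectively from the $m-3$ remaining values, in $(m-3)!$ ways. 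Hence $|\Gamma_f \cap \tilde S_g \cap L_\infty| = (m-2)(d-1)(m-3)! = (d-1)(m-2)!$, which is part~(ii); and when $\gcd(m,n)=1$, i.e. $d=1$, this number is $0$, which is part~(i).

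The main obstacle I anticipate is the combinatorial bookkeeping in the last step — making sure the value $m$ (forced to position $m$ in $\mathrm{Stab}_m$) is handled correctly and that I count values $\alpha_i$ rather than positions, and confirming transversality so that each such $p_\alpha$ counts with multiplicity one (this follows from genericity of $f$ as in Lemma~\ref{degGamma}, together with the fact that $\tilde S_g$ and $\Gamma_f$ meet $L_\infty$ in a sufficiently transverse way for generic $g$, since $\tilde g_{12}|_{L_\infty}$ has simple zeros on the circle of $m$-th roots of unity unless $d>1$ forces coincidences — and even then the intersection $\Gamma_f \cap \tilde S_g$ stays transverse for generic coefficients). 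A secondary point is to confirm that the rescaling sending the $m$-th coordinate of $p_\alpha$ to $1$ does not interfere, which is immediate since for a simple cycle with $m>2$ the distinguished indices $1,2$ are distinct from $m$, and the equation $\xi^{n\alpha_1} = \xi^{n\alpha_2}$ is scale-invariant anyway.
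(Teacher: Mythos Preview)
Your proposal is correct and follows the same idea as the paper: evaluate the leading homogeneous form of $\tilde g_{12}$ at the $(m-1)!$ points $p_\alpha\in\Gamma_f\cap L_\infty$ and count those on which it vanishes, reducing to the congruence $n(\alpha_1-\alpha_2)\equiv 0\pmod m$.

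The one notable difference is a choice of normalization that makes the paper's count considerably shorter. You take the simple cycle $C=z_1-z_2$, so neither distinguished index coincides with the position $m$ whose value is fixed in $\mathrm{Stab}_m$; this forces you into the inclusion--exclusion on ordered pairs $(\alpha_1,\alpha_2)$ in $\{1,\dots,m-1\}$, culminating in $(m-2)(d-1)(m-3)!$. The paper instead (in effect) chooses the simple cycle so that one of its two indices is the $m$-th one, whose value $\alpha_m=m$ is fixed and satisfies $\xi^{n\alpha_m}=1$; the condition then collapses to $\xi^{n\alpha_1}=1$, i.e.\ $\alpha_1\in\{m/d,2m/d,\dots,(d-1)m/d\}$, giving $d-1$ choices for $\alpha_1$ and $(m-2)!$ free permutations of the remaining coordinates, hence $(d-1)(m-2)!$ at once. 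Your anticipated ``main obstacle'' (the bookkeeping around the fixed value $m$) simply disappears under this relabelling, and you may want to adopt it. Note also that your formula $(m-2)(d-1)(m-3)!$ tacitly assumes $m\ge 3$; the normalization used by the paper handles $m=2$ uniformly.
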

\begin{proof}
     We assume that $c=(1,-1,0,\dots,0)$ (the remaining cases are the same up to change of notations). 
     
     (i) The regular at infinity condition \eqref{eq:generic} reads 
    $1-\xi^{n\alpha_1}\not=0$. If $\gcd(m,n)=1$, then $n\alpha_1$ is not divisible by $m$, so the condition is automatically satisfied, which proves the first claim (as in Lemma \ref{noinf}).

   (ii)  If $\gcd(m,n)=d$, then $\xi^{n\alpha_1}=1$, for $\alpha_1=km/d$, $k=1,\dots,d-1$. To each such $k$, correspond $(m-2)!$ points of $\Gamma_f\cap L_\infty$, with the same first two coordinates $1, \xi^{\alpha_1}$, thus also lying on $\tilde{S}_g$. Thus $\Gamma_f\cap \tilde{S}_g$ intersects $L_\infty$ at $(d-1)(m-2)!$ points.
\end{proof}

\begin{theo}\label{th:simpletang}
 Let $Z_1^S(m,n)$ denote the maximal number of zeros of abelian integrals along \emph{simple} cycles of $f$ of degree $m$ deformed by $g$ of degree $n$ (Remark \ref{remCheb}). Then 
 $$
 Z_1^S(m,n)=\frac{(n-1)(m-1)-(d-1)}{2}.
 $$
\end{theo}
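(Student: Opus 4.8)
The plan is to rerun the Bezout count of Sections~3--4 using the \emph{reduced} zero hypersurface $\tilde S_g$ of Lemma~\ref{lem:irreducibility}(iii) in place of $S_g$, to subtract the unavoidable points at infinity supplied by Lemma~\ref{lem:simple infinity}, and then to divide by the order of the symmetry group of a simple cycle. Normalising, I take $C=z_1-z_2$, so $M_1(t)=g(z_1(t))-g(z_2(t))$ and $S_g=\Delta_{12}\cup\tilde S_g$ with $\deg\tilde S_g=n-1$; the regular zeros $t$ of $M_1$ with $z_1(t)\ne z_2(t)$ are exactly the values $t=f(z_1)=\dots=f(z_m)$ attached to the points of $\Gamma_f\cap\tilde S_g$ lying outside $\bigcup_{i\ne j}\Delta_{ij}$ (on $\Delta_{12}$ the cycle degenerates and $M_1\equiv0$; on the other diagonals $t$ is a critical value). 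Since $\deg\Gamma_f=(m-1)!$ by Lemma~\ref{degGamma} and $\Gamma_f\not\subset\tilde S_g$ for generic $g$ (otherwise $M_1\equiv0$), Bezout's theorem gives $(m-1)!(n-1)$ intersection points of $\Gamma_f$ and $\tilde S_g$ in $\CP^m$, counted with multiplicity and including those at infinity.

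First I would discard the intersection at infinity. By Lemma~\ref{lem:simple infinity} the set $\Gamma_f\cap\tilde S_g\cap L_\infty$ consists of exactly $(d-1)(m-2)!$ points, $d=\gcd(m,n)$ (and is empty when $m,n$ are coprime); moreover $\Gamma_f\cap L_\infty$ and $\tilde S_g\cap L_\infty$ depend only on $m$ and on $n$, so these $(d-1)(m-2)!$ points lie on $\Gamma_f\cap\tilde S_g$ for \emph{every} admissible pair $(f,g)$, each contributing at least $1$, and for generic $f,g$ exactly $1$, to the Bezout total. Next, by Lemma~\ref{lem:simple}, for generic $f,g$ the finite part of $\Gamma_f\cap\tilde S_g$ is disjoint from $\bigcup\Delta_{ij}$, and, exactly as in the proof of Theorem~\ref{theo:A}, a further small perturbation of $(f,g)$ makes every finite intersection point transversal and makes it lie over a regular value of $f$. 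Hence, for generic $f,g$, the number of finite, off-diagonal, simple intersection points of $\Gamma_f$ and $\tilde S_g$ is
\[
(m-1)!(n-1)-(d-1)(m-2)!=(m-2)!\bigl[(m-1)(n-1)-(d-1)\bigr],
\]
while for arbitrary $f,g$ the number of distinct finite off-diagonal points is at most this quantity.

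Then I would quotient by the symmetry of the cycle. The subgroup $H\subset Sym_m$ fixing $c=(1,-1,0,\dots,0)$ up to sign is generated by the transposition $(12)$, which sends $c$ to $-c$, together with all permutations of the last $m-2$ entries, so $|H|=2(m-2)!$. Over \emph{any} regular zero $t$ of $M_1$ the fibre $f^{-1}(t)$ has $m$ distinct points and $M_1(t)=0$ selects one unordered pair of them; all orderings placing that pair in the first two slots give $2(m-2)!$ distinct points of $\Gamma_f\cap\tilde S_g$ over $t$, whence the number of zeros of $M_1$ is always at most the count above divided by $2(m-2)!$, i.e.
\[
Z_1^S(m,n)\le\frac{(m-2)!\bigl[(m-1)(n-1)-(d-1)\bigr]}{2(m-2)!}=\frac{(m-1)(n-1)-(d-1)}{2}.
\]
For the reverse inequality, for generic $f,g$ I would invoke the lemma computing $k=|H|$ for the Vieta map — which by Lemma~\ref{lem:irreducibility}(iii) applies to the simple-cycle case with $\tilde S_g$ in place of $S_g$: the group $H$ preserves $\Gamma_f$ and $\tilde S_g$, acts freely on points with pairwise distinct coordinates, and generically the $H$-orbits of finite off-diagonal points of $\Gamma_f\cap\tilde S_g$ are exactly the fibres of the map ``point $\mapsto t$'' (two intersection points give the same $t$ iff they are $Sym_m$-equivalent, and both lying on $\tilde S_g$ then forces $H$-equivalence). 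Combined with the previous step this yields equality and proves the theorem.

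The main obstacle is the realisation statement: I must exhibit a \emph{single} pair $(f,g)$ for which, simultaneously, the only points of $\Gamma_f\cap\tilde S_g$ at infinity are the $(d-1)(m-2)!$ forced ones and each is transversal, no finite intersection point lies on any diagonal, every finite intersection point is transversal, and distinct $H$-orbits of finite points carry distinct values of $t$. Each of these is an open dense condition; the work lies in achieving them together, which I would do as in the proof of Theorem~\ref{theo:A} by a finite chain of small perturbations of $f$ and of $g$ — note these do not move $C$, so it stays simple — using Thom transversality and a Lemma~\ref{differentt}-style separation argument for the last two conditions and Lemma~\ref{lem:simple infinity} together with the leading-coefficient observation of the second paragraph for the first. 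As a sanity check one verifies, by a short case analysis on the parities of $m,n,d$, that $(m-1)(n-1)-(d-1)$ is always even, so the formula defines an integer.
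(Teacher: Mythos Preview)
Your proof is correct and follows essentially the same approach as the paper: intersect $\Gamma_f$ with the reduced hypersurface $\tilde S_g$ of degree $n-1$, obtain the Bezout count $(n-1)(m-1)!$, subtract the $(d-1)(m-2)!$ unavoidable points at infinity from Lemma~\ref{lem:simple infinity}, and divide by $|H|=2(m-2)!$. Your treatment of the realisation step is somewhat more explicit than the paper's, which dispatches sharpness in a single line (``The bound is sharp, as Bezout theorem gives the exact bound''), but the underlying argument is the same.
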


\begin{proof}
   A simple cycle $C$ is encoded by a tuple $c=(1,-1,0,...,0)$. The symmetry group $H$ of the cycle $C$ is generated by permutations of the last $m-2$ entries  and permutation of the first two entries of $c$. Thus $|H|=2(m-2)!$
   
   On the other side, for such $C$ and generic $f,g$ the number of points of $ \Gamma_f\cap S_g$ lying outside $\cup\Delta_{ij}$ is at most $(n-1)(m-1)!$ (see Lemma~\ref{lem:simple}). The group $H$ acts freely on these points. Thus, the number of points of $L_f\cap V( S_g)$, which are not critical points of $f$ is at most $\frac{(n-1)(m-1)!}{|H|}=\frac{(n-1)(m-1)}{2}$. This is the bound given by  Gavrilov and Movasati in \cite{GM}. 

   Lemma~\ref{lem:simple infinity} implies that this upper bound is sharp if $\gcd(m,n)=1$.
   However, if $\gcd(m,n)=d>1$, then at least $(d-1)(m-2)!$ of the points of $ \Gamma_f\cap S_g$ lie on $L_\infty$, see Lemma~\ref{lem:simple infinity}. Thus, in the general case, $Z_1^S(m,n)$ is lower, given by
   \begin{equation*}
       \frac{(n-1)(m-1)!-(d-1)(m-2)!}{2(m-2)!}=\frac{(n-1)(m-1)-(d-1)}{2}.
   \end{equation*}
   The bound is sharp, as Bezout theorem gives the exact bound.
\end{proof}
   



\subsection{Points of $\Gamma_f\cap S_g$ on $L_\infty$}
We showed previously that $\#(\Gamma_f\cap L_\infty)=(m-1)!$ We study now which of these points can be in $S_g$, as well. 
This can happen for some non-generic tuples $c=(n_1,\ldots,n_m)$ depending on the arithmetic properties of $m,n$. 

Recall that  the degree of  \emph{cyclotomic field extension} $\Q(\xi)$, where $\xi$ is a primitive $m$-th root of $1$, is given by $[\Q(\xi):\Q]=\phi(m)$, where $\phi$ is the \emph{Euler totien function}, see \cite{Lang}.
\begin{lema}\label{lem:infinity}
\hfill
  \begin{itemize}
     \item[(i)] 
     $\#\left(\Gamma_f\cap S_g\cap L_\infty\right)<(m-1)!$ for any nontrivial cycle $C$.
     \item[(ii)] If $m$ is prime, then $\Gamma_f\cap S_g\cap L_\infty=\emptyset$, for any nontrivial cycle $C$.
      \item[(iii)] If $m$ and $n$ are coprime, then the set of tuples $\{n_i\}$ such that $(1:\xi:...:\xi^{m-1})\in\Gamma_f\cap S_g\cap L_\infty$ is a free $\Z$ module generated by $m-\phi(m)$ linearly independent tuples.
  \end{itemize}  
\end{lema}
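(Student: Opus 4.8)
The plan is to analyze the single condition that a point $p_\alpha \in \Gamma_f \cap L_\infty$ also lies on $S_g$, and to translate it into a linear condition on the weight tuple $c = (n_1,\ldots,n_m)$ with coefficients in the cyclotomic field $\Q(\xi)$, $\xi = e^{2\pi i/m}$. Recall from the proof of Lemma \ref{degGamma} that the points of $\Gamma_f$ at infinity are exactly the $p_\alpha = (\xi^{\alpha_1}:\cdots:\xi^{\alpha_{m-1}}:1:0)$ with $\alpha \in Stab_m$, and from the proof of Lemma \ref{noinf} that $G|_{L_\infty} = \sum_j n_j z_j^n$, so $p_\alpha \in S_g$ exactly when $\sum_{j=1}^m n_j \xi^{n\alpha_j} = 0$. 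Since $\{\alpha_1,\ldots,\alpha_m\} = \{1,\ldots,m\}$ as sets, and $\xi^{n}$ is a primitive $(m/d)$-th root of unity with $d = \gcd(m,n)$, the multiset $\{\xi^{n\alpha_j}\}$ is $d$ copies of the set of all $(m/d)$-th roots of unity; so each such condition is a $\Q(\xi^n)$-linear relation among the $n_j$'s, which (being not a scalar multiple of the cycle relation $\sum n_j = 0$ in general) is a genuine extra constraint. This already gives (i): for a nontrivial cycle $C$, at least one $p_\alpha$ fails the relation — indeed if \emph{all} of them vanished one could sum over the $Stab_m$-orbit and, using that the $n_j$ are rational, deduce all $n_j$ equal, hence (by $\sum n_j = 0$) the trivial cycle. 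So $\#(\Gamma_f \cap S_g \cap L_\infty) < (m-1)!$.

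For (ii), suppose $m$ is prime. Then $d = \gcd(m,n)$ is either $1$ or $m$; the latter is excluded since $\deg g = n$ and, after the reduction of Lemma \ref{lem:reduction}, $m \nmid n$, so $d = 1$ and $\xi^n$ is again a primitive $m$-th root of unity. The relation $\sum_j n_j \xi^{n\alpha_j} = 0$ is then a $\Q$-linear dependence among $1,\zeta,\zeta^2,\ldots,\zeta^{m-1}$ for $\zeta = \xi^n$ a primitive $m$-th root of unity; since $m$ is prime, the minimal polynomial of $\zeta$ is $1 + x + \cdots + x^{m-1}$, so the only such dependences are scalar multiples of $(1,1,\ldots,1)$. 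Re-indexing by $\alpha$, this forces $n_1 = n_2 = \cdots = n_m$, hence the trivial cycle again. So no $p_\alpha$ lies on $S_g$ and the intersection is empty.

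For (iii), assume $\gcd(m,n) = 1$, so $\xi^n$ is a primitive $m$-th root of unity and WLOG (re-indexing) we look at the condition for the specific point $(1:\xi:\cdots:\xi^{m-1}:0)$, namely $\sum_{j=1}^m n_j \xi^{n(j-1)} = 0$, equivalently $\sum_{k=0}^{m-1} n_{\rho(k)} \eta^{k} = 0$ where $\eta = \xi^n$ runs through all $m$-th roots of unity and $\rho$ is the induced bijection. This is the statement that the integer tuple lies in the kernel of the $\Z$-linear (in fact $\Q$-linear) map $\Z^m \to \Q(\xi)$, $(n_j) \mapsto \sum n_j \eta^{j-1}$. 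The image spans the $\Q$-vector space $\Q(\xi)$, which has dimension $\phi(m)$ over $\Q$; hence the kernel is a $\Q$-subspace of dimension $m - \phi(m)$, and its intersection with $\Z^m$ is a free $\Z$-module of rank $m - \phi(m)$ (a saturated sublattice of $\Z^m$), generated by $m - \phi(m)$ linearly independent integer tuples. The main obstacle is making sure that the "generic" genuineness of the constraint is correctly accounted for — i.e., that one is describing the solution lattice of the single relation attached to one point at infinity, not conflating the relations coming from different $\alpha$'s, and that the reduction $m \nmid n$ from Lemma \ref{lem:reduction} is in force so that $\xi^n$ is a primitive root; once that bookkeeping is fixed, the three claims are immediate consequences of standard facts about cyclotomic fields and the Euler totient.
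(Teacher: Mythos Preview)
Your approach is essentially the paper's: translate the condition $p_\alpha\in S_g$ into the integer linear relation $\sum_j n_j\xi^{n\alpha_j}=0$ among $m$-th roots of unity, and then use that $[\Q(\xi):\Q]=\phi(m)$. Parts (ii) and (iii) match the paper's proof almost verbatim, including the reduction to $m\nmid n$ via Lemma~\ref{lem:reduction} and the rank count $m-\phi(m)$ for the kernel of $(n_j)\mapsto\sum n_j\eta^{j-1}$.

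The only real gap is in (i): ``sum over the $Stab_m$-orbit and deduce all $n_j$ equal'' does not actually work as stated. If you carry out that sum (with $\gcd(m,n)=1$, say) you get $(m-1)!\,n_m+(m-2)!\sum_{j<m}n_j(-1)=0$, which after using $\sum n_j=0$ only yields $n_m=0$, not $n_1=\cdots=n_m$. The paper's argument is more direct and avoids any averaging: evaluate the relation at the two permutations $\mathrm{id}$ and the transposition $(12)$ and subtract, obtaining $(n_1-n_2)(\xi-\xi^2)=0$, hence $n_1=n_2$; by the same trick $n_i=n_j$ for all $i,j$, so the only functional vanishing on every $p_\alpha$ is a multiple of $\sum z_j$, which never represents a nontrivial cycle. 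Replace your orbit-sum sentence with this two-permutation subtraction and (i) goes through.
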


\begin{proof}

We claim that $\left\langle p_\alpha=(\xi^{\alpha_1},\dots,\xi^{\alpha_m}), \alpha\in Sym_m\right\rangle^\perp=\langle(1,\dots,1)\rangle$. Indeed, suppose that $\ell=\sum n_jz_j$ vanishes on all $p_\alpha$, and let us prove that $n_1 =n_2$. Evaluating on $p_{id}$ and $p_{(12)}$ we get 
$\ell(p_{id})=\sum n_j\xi^j=0$ and also $\ell(p_{(12)})=n_1\xi^2+n_2\xi+\sum_{j>2} n_j\xi^j=0$. Subtracting, we get $(n_1-n_2)(\xi-\xi^2)=0$, so $n_1=n_2$.

Thus, the  linear functionals vanishing on all vectors $p_\alpha$ are all  proportional to $\sum z_j$. However, $C$ is a cycle, i.e. $\sum n_j=0$. Hence,  $C$ cannot be proportional to $\sum z_j$ unless $C=0$. This proves (i).

(ii) If $m$ is prime, then $[\Q(\xi):\Q]=\phi(m)=m-1. $
Hence, there is only one integer relation among the roots $\xi^j$, namely the aforementioned  relation $\sum_{j=1}^m\xi^j=0$. But this relation corresponds to the chain $C=\sum z_i$, i.e. to the tuple $c=(1,\ldots,1)$, which does not correspond to a cycle. 

(iii) As $m$ and $n$ are coprime, the set $\{\xi^{nj}\}_{j=1}^m$ coincides with the set of roots of unity of degree $m$. 
As $[\Q[\xi]:\Q]=\phi(m)$, the space of linear relations over $\Q$ between these roots has dimension $m-\phi(m)$.
\end{proof}

\begin{exam}
For $m=pq$, with $p,q$ prime, the group $\{\xi^j, j=1,...,m\}$ is isomorphic to $\Z_p\times \Z_q$, with isomorphism sending  the primitive roots of unity $e_p=\xi^q$ and $e_q=\xi^p$ of degree $p$ and $q$, resp., to the generators of $\Z_p$ and $\Z_q$. We have $p$ relations of the form $$e_p^j\sum_{k=0}^{q-1} e_q^k=0 , \quad j=0,..,p-1,$$
and $q$ relations of the form 
$$e_q^k\sum_{j=0}^{p-1} e_p^j=0,\quad k=0,..,q-1.$$ 
The first relation means that sum of any row in the matrix $\{e_p^je_q^k\}_{j,k=0}^{p-1,q-1}$ is zero, the second relation means that the sum of any column is zero.
There is one linear dependence between these relations due to double counting:  the sum $\sum \xi^j=0$ of all elements of this matrix can be computed as sum of all rows or sum of all columns.

Altogether we have $p+q-1=pq-\phi(pq)$ linearly independent relations, as needed.
\end{exam}
\begin{exam}
    For $m=p^k$ we similarly have $p^{k-1}=p^k-\phi(p^k)$ integer relations $\xi^j\sum_{i=0}^{p-1}\xi^{p^{k-1}i}=0 $, with $j=0, p^{k-1}-1$.
\end{exam}
\begin{rema}
    Combining the last two examples, one can get the full description of the generators of the module of integer linear relations between $\{\xi^j\}$, for any $m$.
\end{rema}

If $\gcd(m,n)=\ell>1$, then the numbers $\xi^{m\alpha_i}$ run over all roots of unity of degree $n/\ell$  and we have a similar  situation but with smaller degree.

\subsection{Tangential problem for any cycle $C$}
We proved in Lemma~\ref{trans} that for a non-simple cycle $C$, any polynomial $g$ and a generic polynomial $f$ the intersection of $\Gamma_f\cap S_g$ is transversal outside of $L_\infty$. This claim doesn't exclude the possibility that this intersection is empty (i.e. $\Gamma_f\cap S_g\subset L_\infty$).

In Theorem~\ref{theo:A} we counted the number of points in  this intersection for generic $f,g$ and $C$, in particularly we showed that it is not empty. The examples above show that in exceptional cases the number of points can be smaller.
Here we prove that this intersection is not empty for \emph{any} $g,C$ and generic $f$. 

\begin{prop}
For  any polynomial $g$, any cycle $C$ and a generic  polynomial  $f$ of degree $\deg f>2$  
 the set  $\left(\Gamma_f\cap S_g\right)\setminus L_{\infty}$ is non-empty.
\end{prop}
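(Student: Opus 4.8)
The plan is to play B\'ezout's theorem against the intersection that is forced to lie on $L_\infty$. Since $\deg\Gamma_f=(m-1)!$ (Lemma~\ref{degGamma}) and $\deg S_g=n$, the curve $\Gamma_f$ and the hypersurface $S_g$ meet in $n(m-1)!$ points of $\CP^m$ counted with multiplicity; I would show that, for generic $f$, the total intersection multiplicity supported on $L_\infty$ is \emph{strictly} less than $n(m-1)!$, so that some intersection point lies off $L_\infty$ and, being finite, belongs to $(\Gamma_f\cap S_g)\setminus L_\infty$. First dispose of the degenerate case $\int_C g\equiv0$: by the solution of the tangential center problem this means $g=P(f)$ for a polynomial $P$, whence $\sum_i n_i g(z_i(t))\equiv0$ on $\Gamma_f$, i.e.\ $\Gamma_f\subset S_g$; as $\Gamma_f$ contains the finite points $(z_1:\dots:z_m:1)$ over regular values of $f$, it is not contained in $L_\infty$ and we are done. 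So assume $M_1=-\int_C g\not\equiv0$, in particular $\Gamma_f\not\subset S_g$, so that B\'ezout applies as stated.

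Next comes the local analysis at infinity. Recall $\Gamma_f\cap L_\infty=\{p_\alpha:\alpha\in\mathrm{Stab}_m\}$ consists of $(m-1)!$ distinct points, and by the proof of Lemma~\ref{degGamma} the curve $\Gamma_f$ is smooth at each $p_\alpha$ and transverse to $L_\infty$; hence near $p_\alpha$ there is a local uniformizer $\tau$ with $t=\tau^{-m}$, and in a dehomogenizing chart the defining equation of $S_g$ restricted to $\Gamma_f$ equals a unit times $\tau^{\,n}M_1(t)$. Because $z_i(t)\sim\xi^{\alpha_i}\tau^{-1}$ with $\xi=e^{2\pi i/m}$ and $\deg g=n$, the function $M_1$ has at $p_\alpha$ a pole of order at most $n$ in $\tau$, so the local intersection number of $\Gamma_f$ and $S_g$ there equals $n+\operatorname{ord}_\tau M_1\ge0$: it is $n$ when $p_\alpha\notin S_g$, it is $<n$ when $p_\alpha\in S_g$ but $M_1$ still blows up there, and it exceeds $n$ only if $M_1$ actually vanishes at $p_\alpha$. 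Now Lemma~\ref{lem:infinity}(i) gives $\#(\Gamma_f\cap S_g\cap L_\infty)\le(m-1)!-1$; therefore, \emph{if $M_1$ vanishes at none of the $p_\alpha$}, the intersection mass on $L_\infty$ is at most $n\bigl((m-1)!-1\bigr)<n(m-1)!$, and the proposition follows.

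It then remains to prove that, for generic $f$, $M_1$ does not vanish at any $p_\alpha$. Expanding $z_i(t)=\xi^{\alpha_i}\tau^{-1}+O(1)$, with Puiseux coefficients that are polynomials in the coefficients of $f$, one finds that the coefficient of $\tau^{-l}$ in $M_1$ (for $0\le l\le n$) equals $\Lambda_l(f)\sum_i n_i\xi^{l\alpha_i}$ plus a combination of the sums $\sum_i n_i\xi^{l'\alpha_i}$ with $l<l'\le n$, where each $\Lambda_l(f)$ is a \emph{nonzero} polynomial in the coefficients of $f$ (its leading term in the variable $p_{m-1}$, the coefficient of $z^{m-1}$ in $f$, is a nonzero multiple of the leading coefficient of $g$). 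Hence for generic $f$ all $\Lambda_l\ne0$, the system is triangular, and the vanishing of $M_1$ at $p_\alpha$ is equivalent to $\sum_i n_i\xi^{l\alpha_i}=0$ for $l=1,\dots,n$; together with the cycle relation $\sum_i n_i=0$ (the case $l=0$) and the fact that $\xi^{\alpha_1},\dots,\xi^{\alpha_m}$ are the $m$ distinct $m$-th roots of unity, a Vandermonde argument (using $\xi^m=1$, so that $l$ and $l\bmod m$ give the same equation) forces $C=0$ whenever $n\ge m-1$ or $m\mid n$ — contradicting nontriviality of $C$.

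The step I expect to be the main obstacle is exactly this last one in the remaining range $\deg g<\deg f-1$. There a sufficiently symmetric cycle can make $M_1$ vanish at several of the $p_\alpha$ (to order $1+\#\{1\le l\le n:\sum_i n_i\xi^{l\alpha_i}=0\}$ for generic $f$), so the crude estimate ``multiplicity $\le n$ at every $p_\alpha$'' breaks down and the comparison with the B\'ezout number must be done more carefully. Closing the argument in that case requires a precise bookkeeping of these orders of vanishing — playing the \emph{number} of points $p_\alpha$ lying on $S_g$ (bounded via Lemma~\ref{lem:infinity}) against their multiplicities, and using the asymmetry of $C$ (Definition~\ref{asym}) — so as to still keep the total intersection multiplicity along $L_\infty$ strictly below $n(m-1)!$.
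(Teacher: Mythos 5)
Your strategy — compare the B\'ezout number $n(m-1)!$ with the intersection multiplicity that $\Gamma_f\cap S_g$ deposits on $L_\infty$, computed as $n+\operatorname{ord}_\tau M_1$ at each $p_\alpha$ — is genuinely different from the paper's proof, which instead uses the Vieta map to reduce the claim to dominance of $\pi_{m-1}\circ V\colon S_g\to\C^{m-1}$ and, assuming the contrary, derives $\int_Cg\equiv0$ for a positive-dimensional family of $f$'s, which is then excluded by a monodromy/Picard--Lefschetz analysis (including an explicit treatment of the non-Morse strata and of $m=4$). Your reformulation is correct in substance: the finite intersection count equals minus the total order of $M_1$ at the points at infinity, Lemma~\ref{lem:infinity}(i) does guarantee one $p_\alpha$ where $M_1$ has a pole of exact order $n$, and the argument closes whenever $M_1$ vanishes at no $p_\alpha$. (One slip: the local intersection number at $p_\alpha\notin S_g$ is $0$, not $n$ — there $\operatorname{ord}_\tau M_1=-n$ exactly; your final tally $\le n\bigl((m-1)!-1\bigr)$ implicitly uses the correct value, so this is only a bookkeeping error in the prose.)

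The genuine gap is the one you name yourself, and it cannot be closed the way you propose. The expansion $g(z_i(\tau))=\sum_{l\le n}\mu_l(f,g)\,\xi^{l\alpha_i}\tau^{-l}$ shows that $M_1$ vanishes at $p_\alpha$ exactly when $\sum_i n_i\xi^{l\alpha_i}=0$ for all residues $l$ hit by the nonzero $\mu_l$ with $1\le l\le n$; for $n\le m-2$ these are at most $n<m-1$ linear conditions on $(n_1,\dots,n_m)$ beyond the cycle condition, so they admit nontrivial cycles $C$ (e.g.\ $m=4$, $n=1$, $C=z_1-z_2+z_3-z_4$ gives $\sum n_i\xi^{\alpha_i}=0$). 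For such $C$ the multiplicity at some $p_\alpha$ exceeds $n$, the bound ``mass at infinity $<n(m-1)!$'' is no longer automatic, and one must rule out that all zeros of $M_1$ on the component carrying the pole sit at infinity. You propose to finish by ``using the asymmetry of $C$'', but the Proposition is asserted for \emph{any} cycle $C$, symmetric or not, so asymmetry is not available; precisely these symmetric, low-degree-$g$ configurations are the hard case, and it is exactly here that the paper switches to the global monodromy argument (if $M_1$ had no finite zero for all $f$ in a codimension-one family, then $\int_Cg\equiv0$ there, forcing $g=P(f)$ along a moving family of $f$'s — impossible). In addition, your triangularity claim rests on the nonvanishing of the coefficients $\Lambda_l(f)$ as polynomials in the coefficients of $f$, which is asserted but not verified. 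As it stands the proof is complete only for $n\ge m-1$ (or for cycles avoiding the root-of-unity relations), not in the generality claimed.
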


\begin{proof}
We use the Vieta mapping defined by \eqref{Vieta}.
    The claim is equivalent to the claim that the map $\pi_{m-1}\circ V: S_g\to \C^{m-1}$ is dominant, i.e. that the set $B=\pi_{m-1}(V(S_g))$ is Zariski dense. Assume that it is not,  i.e. that $B$ has dimension less than $m-1$. As $V$ is finite, we have $\dim V(S_g)=m-1$. As $\pi_{m-1}$ is a linear projection with one-dimensional kernel, the above assumption means that  $\dim B=m-2$ and $V(S_g)=B\times \C_t$. 
The latter condition means that,  if $\int_Cg=0$, for some $f=f_{\sigma^{m-1}}$, then $\int_Cg=0$, for  $f-t$, for any $t\in\C$, as well, i.e. $B=\{f\,| \,\int_{C(t)}g\equiv0\}$. In other words, if $\int_Cg$ vanishes for some $f$, then it vanishes  identically.
\begin{lema}\label{lem:composition}
    Assume that $f\in B$ and the space of $0$-cycles of $f$ has no subspace invariant under monodromy. Then     $g=P(f)$ for some $P\in \C[t]$, $P(0)=0$.
\end{lema}
\begin{proof}
    Clearly, this implies that $\int_{C'}g=0$, for \emph{any} cycle $C'$, in particular, for any simple cycle. This implies that $g$ takes the same value $P(t)$ at all roots of $f(z_i)-t$, for any $t$. Standard arguments show that $P$ is necessarily a  polynomial.
\end{proof}
If, for a  generic $\sigma_{m-1}\in B$, the polynomial $f_{\sigma^{m-1}}$ satisfies the conditions of Lemma~\ref{lem:composition}, e.g if $f$ is Morse polynomial, then we arrive to a contradiction: as  $m-2>0$,  we have a continuous  family of polynomials $f_{\epsilon}$ vanishing at $0$ such that $g=P_\epsilon(f_\epsilon)$, i.e. all $f_\epsilon$ have the same level sets, in particular their roots coincide, which is impossible. 

Thus we remain with the case when the codimension-one set $B$ consists of non-Morse polynomials. We prove that this is impossible in this case, as well. There are two cases corresponding to two codimension-one  strata of the set of non-Morse polynomials:
\begin{enumerate}
    \item  the strata of polynomials with  one  triple point, and all remaining critical points   simple and with all critical values  distinct. Picard-Lefschetz formula implies that the linear space $Mon(C)$ spanned by the orbit of $C$ under monodromy contains an integer  cycle vanishing at the triple point. The classical monodromy of the triple point acts on the two-dimensional space of vanishing cycles by rotation, and both eigenvectors are non-real.  Thus $Mon(C)$ contains all cycles vanishing at the triple point. By Picard-Lefschetz, it then contains all other simple cycles and we conclude as above.
    \item the strata of polynomials with only simple critical points and all critical values distinct except exactly two. If $m>4$ then the monodromy is transitive and Lemma~\ref{lem:composition} applies. 
    
    For $m=4$, then these polynomials have the form $f(z)=\left( (z-a)^2-b\right)^2-c$, $b\not=0$, where $c=(a^2-b)^2$, which are all topologically the same: they are conjugated to $z^4-2z^2$ by left and right action of $\Aff_1$. 
    Let  $f(z)=z^4-2z^2$ and let $z_1<z_2<z_3<z_4$, $z_i=z_i(t)$,  be the roots of $f=t, t\in(-1, 0)$. Then the space $L_2$ generated by the cycles $z_1-z_4$ and $z_2-z_3$ is invariant under monodromy, and (as monodromy acts by orthogonal transformations) its orthogonal complement $L_1=L_2^\perp$ generated by $z_1-z_2-z_3+z_4$ is invariant under monodromy, as well. These are the only two subspaces invariant under monodromy.

   \begin{itemize}
       \item If $\int_Cg\equiv 0$ and $C\notin L_1, L_2$, then $Mon(C)$ spans the whole space of cycles and we conclude as above.
       \item If $C\in L_2$ then we have $g(z_1)\equiv g(z_4)$, $g(z_2)\equiv g(z_3)$, i.e. $g$ is an even polynomial, $g(z)=g(-z)$. 
       Repeating the same arguments for the shifted polynomial $\tilde{f}=f(x-1)$ also lying in $B$, we conclude that necessarily $g(z)=g(-z+2)$ as well, which is clearly impossible.
       \item Assume that  $C=z_1-z_2-z_3+z_4$ and fix some $t$. We have $g(z_1)-g(z_2)=g(z_3)-g(z_4)$. By symmetry, the shifted polynomial $f_1(z)=f(z-\alpha)\in B $ with $\alpha=z_3-z_1$ satisfies  $f_1(z_3)=f_1(z_4)=t$. Let $z_5=z_3+\alpha< z_6=z_4+\alpha$ be the two other roots of $f_1(z)-t=0$. As  $f_1\in B $, we have $g(z_1)-g(z_2)=g(z_5)-g(z_6)$ as well. Repeating this argument, we get $g(z_1)-g(z_2)=g(z_3+k\alpha)-g(z_4+k\alpha)$, for any $k\in \Z$, which is clearly impossible.
   \end{itemize}

\end{enumerate}
\end{proof}
The previous proof implies that for any $g$, $C$ the set of $f$ for which $\int_{C(t)}g\equiv0$ has codimension at least two. Results of \cite{GP}, \cite{ABM} show that this set is empty for a generic $g$.



\section{Dimension of the space of abelian integrals \\
and Chebyshev property conjecture}

In this section we study the dimension $\dim({f,n,C})$ of the space of abelian integrals along zero-cycles $C$ of a polynomial $f$, corresponding to deformations $g$ of degree at most $n$. Note that this dimension gives a lower bound for the number of zeros $Z_1(f,n,C)$ of the corresponding abelian integrals (tangential problem) and the number $Z_\Delta(f,n,C)$ of $0$-limit cycles (i.e. solutions of the infinitesimal problem):

\begin{equation*}
   \dim(f,n,C)-1\leq Z_1(f,n,C)\leq Z_\Delta(f,n,C).
\end{equation*}
Indeed, from the independence of elements of a basis $I_0,\ldots,I_{\dim(f,n,C)-1}$ of the space of abelian integrals, it follows that, except for a finite set $S$ of values $t$, all the Wronskians $W(I_0,\ldots,I_{k})$, $k=0,\ldots,\dim(f,n,C)-1$, of the abelian integrals of the basis are non-zero. Hence, by Cramer's rule, 
 it follows that  for any values $t_1,\ldots,t_{\dim(f,n,C)-1}$ belonging to an interval in a complement of $S$, there exists a polynomial $g$ of degree less then or equal to $m$, such that the abelian integrals verify $\int_{C(t_i)}g=0$, for $i=1,\ldots,\dim(f,n,C)-1$. See also 
the theory of Chebyshev systems (for instance \cite{M}).
The function $g$ is given as a solution of a  system of linear equations. This system is regular by the Chebyshev property. The second inequality follows from the implicit function theorem, as any regular solution of the tangential center problem gives a solution of the infinitesimal center problem. Recall that the converse is in general false as shown by the examples of \emph{alien limit cycles}.

Varying  the above polynomial  $f$, with $f$ of degree less then or equal to $m$ and  its cycle $C$  and taking the maximum, one gets the analogous inequality
\begin{equation*}
    d-1=\dim(m,n)-1\leq Z_1(m,n)\leq Z_\Delta(m,n).
\end{equation*}
\medskip

 \begin{rema} 

One can extrapolate Arnold's Chebyshev property conjecture to abelian integrals on cycles of any dimension $k$. Here, in particular, on abelian integrals on $0$-cycles. 

\end{rema}

This Chebyshev property of abelian integrals  on $1$-cycles appearing for generic quadratic deformations of exact Hamiltonian systems was proved by putting together the result \cite{G} of Gavrilov and  \cite{ZL} of Zenghua Zhang and  Chengzhi Li.
Here the deformations are given by a families of polynomial one-forms deforming an exact form and the degree is the maximum of the degrees of their coefficients. Many examples where the Chebyshev property is verified on 1-cycles are given by Petrov and other authors.
In \cite{RZ} some  symmetric integrable systems deformed by symmetric forms are studied and the authors  prove that the corresponding abelien integrals do not verify the  Chebyshev property. These examples are nongeneric.  

However, abelian integrals along cycles $C$ of a generic polynomial $f$ of degree $m$ of polynomials $g$ of degree less then or equal to $m-1$, should certainly be considered as natural abelian integrals.

\begin{rema} 
Note, however, that 
Theorem~\ref{theo:D} shows that 
abelian integrals along zero-dimensional cycles 
for $\deg f=n$ and $\deg g=n-1$ (or $\deg g=n$) do
not form Chebyshev systems. Worse, they are very far from being Chebyshev. 

\end{rema}

\subsection{Brieskorn and Petrov moduli}
Let $f\in\C[z]$ be of degree $m$. 
Following \cite{GM}, we define the \emph{Brieskorn modulus} $\mathcal{B}$ of $f$ over $\C[t]$  by

\begin{equation*}
\mathcal{B}=\frac{\C[z,t]}{(f(z)-t)\C[z,t]+\C[t]}.
\end{equation*}
This is the space of restrictions to the graph of $f$ of polynomials in $z,t$ identically vanishing on $\{z=0\}$.
In other words, this  is an algebraic extension of $\C[t]$ by an element $z$ satisfying the equation $f(z)=t$. 

Let now $C$ be a $0$-cycle of $f$. We define a mapping
$$
\mathcal{I}_{C}:\mathcal{B}\to \mathcal{A},
$$
where $\mathcal{A}$ is the space of algebraic functions in one variable $t\in\C$, 
given by 
$$
\mathcal{I}_C(g)(t)=\int_{C(t)}g.
$$
Note first that the function $\mathcal{I}_C$
is well defined. We call its image  the \emph{Petrov modulus} $\mathcal{P}_{C}$ over $\C[t]$.
Note that the Petrov modulus depends on the cycle $C$.
Given a polynomial $g(z,t)$ its image in the Petrov modulus is the same as the image of $g(z,f(z))$.

\begin{rema}
    The ring $\C[z]$ is a free $\C[t]$-module generated by $1,\dots, z^{m-1}$. In other words, any polynomial $g\in\C[z]$ can be represented as a sum
    $$
    g=P_0(f)+zP_1(f)+...+z^{m-1}P_{m-1}(f), \quad P_i\in\C[t],$$ 
    and the above proof shows that
    $$
    \int_Cg=P_1(t)\int_Cz+...+P_{m-1}(t)\int_Cz^{m-1}=\int_C\tilde{g}, $$
where $\tilde{g}=zP_1(f)+...+z^{m-1}P_{m-1}(f)$ has degree not divisible by $m$. See the following subsection for more details.
\end{rema}

Recall that by \cite{GM}, the Brieskorn modulus $\mathcal{B}$
is a free $(m-1)$-dimensional modulus generated by the function $z^j$, $j=1,\ldots,m-1$. 

The proof follows easily by the Euclidian division algorithm by $f$ recalling that $\mathcal{I_C}(1)=0.$

Denote $\mathcal{M}$ the monodromy group of $f$. Consider its action on the $\Z$-modulus of cycles $ \mathcal{C}.$ Let $Orb(C)$ be the image of $C$ by the monodromy group in the modulus of cycles of $f$. 

\begin{prop}
If the orbit by monodromy of a cycle $C$ is the whole modulus of cycles $\mathcal{C}$, then the Petrov modulus $\mathcal{P_C}$ and the Brieskorn modulus $\mathcal{B}$ are isomorphic. 
\end{prop}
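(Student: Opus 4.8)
The plan is to show that the $\C[t]$-module homomorphism $\mathcal{I}_C\colon\mathcal{B}\to\mathcal{A}$ is injective; since $\mathcal{P}_C$ is by definition the image of $\mathcal{I}_C$, this immediately yields $\mathcal{B}\cong\mathcal{P}_C$ (as $\C[t]$-modules, via $\mathcal{I}_C$). Linearity of $\mathcal{I}_C$ over $\C[t]$ is clear from the cycle condition, exactly as in Lemma~\ref{lem:reduction}: for $P\in\C[t]$ one has $\int_{C(t)}P(f)g=\sum_j n_jP(t)g(z_j(t))=P(t)\int_{C(t)}g$, and the target $\mathcal{A}$ is a $\C[t]$-module, so only injectivity is in question.

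First I would take $g\in\C[z]$ representing a class with $\mathcal{I}_C(g)=0$, i.e. $\int_{C(t)}g\equiv0$ on $\C\setminus\Sigma$. The function $t\mapsto\int_{C(t)}g$ is a branch of a multivalued algebraic function whose analytic continuations along loops in $\C\setminus\Sigma$ are precisely the functions $t\mapsto\int_{\gamma_*C(t)}g$, $\gamma\in\mathcal{M}$ (a loop permutes the roots $z_j(t)$ by the corresponding monodromy element, hence permutes the summands $n_jg(z_j(t))$). So identical vanishing of one branch forces $\int_{\gamma_*C(t)}g\equiv0$ for all $\gamma\in\mathcal{M}$, and by $\Z$-linearity in the cycle, $\int_{C'(t)}g\equiv0$ for every $C'$ in the $\Z$-span of $Orb(C)$. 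By hypothesis that span is the full module of cycles $\mathcal{C}$, hence $\int_{C'(t)}g\equiv0$ for \emph{every} cycle $C'$ of $f$; in particular for the simple cycles $C'=z_i(t)-z_j(t)$, giving $g(z_i(t))\equiv g(z_j(t))$ for all $i,j$. Thus $g$ is constant on the fibers of $f$, so $g(z_i(t))=\frac{1}{m}\sum_k g(z_k(t))$ is a symmetric function of the roots of $f(z)-t$, i.e. a polynomial $P(t)$ in the coefficients of $f(z)-t$. Then $g(z)-P(f(z))$ is a one-variable polynomial vanishing at every $z_i(t)$ as $t$ ranges over an open subset of $\C\setminus\Sigma$ (where, say, $z_1(t)$ is nonconstant), hence it vanishes identically, i.e. $g=P(f)$, so $g$ represents the zero class in $\mathcal{B}$. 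Therefore $\ker\mathcal{I}_C=0$ and the proposition follows.

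I expect the one genuinely load-bearing step to be the passage from ``$\int_{C(t)}g\equiv0$'' to ``$\int_{C'(t)}g\equiv0$ for all cycles $C'$'', which is precisely where the assumption $Orb(C)=\mathcal{C}$ enters; everything after it is the symmetric-function argument already underlying the freeness of $\mathcal{B}$ on $z,\dots,z^{m-1}$ and Lemma~\ref{lem:composition}. It is worth recording that the hypothesis cannot be dropped: when $Orb(C)$ spans only a proper monodromy-invariant subspace, $\mathcal{I}_C$ generally has nonzero kernel --- for $f=z^4-2z^2$ and $C$ in the invariant plane spanned by $z_1-z_4$ and $z_2-z_3$, every even polynomial $g$ lies in $\ker\mathcal{I}_C$, yet not every even $g$ is a polynomial in $f$ --- so in this sense the statement is sharp.
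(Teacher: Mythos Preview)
Your proof is correct and follows essentially the same approach as the paper's. The paper's argument is a two-line reference---since $Orb(C)=\mathcal{C}$ contains in particular all simple cycles, the claim follows from Proposition~3 of \cite{GM}---and what you have written is precisely that argument unpacked (and is also the content of Lemma~\ref{lem:composition}): monodromy propagates the vanishing of $\int_{C(t)}g$ to every cycle, the simple cycles force $g$ to be constant on fibers of $f$, hence $g=P(f)$ and $\mathcal{I}_C$ is injective.
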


\begin{proof}
If the orbit $Orb(C)$ is the whole  cycles modulus $\mathcal{C}$, then in particular it contains all the simple cycles. Then the claim follows from Proposition 3 in \cite{GM}.

Let $c=(c_1,...,c_{m})\in\Z^m$ be the vector of coefficients of $C$, and denote $\phi (x)=c_1+c_{m}x+...+c_2x^{m-1}$.
Let $\ell_c=\deg \gcd(\phi(x), x^m-1)$.

\begin{prop}\hfill
   \begin{itemize}
       \item [(i)] The orbit of any  cycle generates a submodule of the modulus of cycles $\mathcal{C}$ of dimension at least $m-\ell_c$.
       \item [(ii)] In particular, for $m$ prime the orbit of any cycle generates the whole  $\mathcal{C}$.
   \end{itemize}  
\end{prop}
\begin{proof}
The monodromy of $f$ at infinity is a cyclic permutation $\alpha=(1,...,m)$ of roots of $f$. Denoting by $\alpha^*$ the action of $\alpha$ on cycles, then we see that the vectors $c, \alpha^*(c),...,(\alpha^*)^{m-1}(c)$ form a \emph{circulant matrix} $A^*(C)$, whose rank is $m-\ell_c${\color{black}, (see \cite{Ing})}. Thich proves the first claim.

More exact, the eigenvalues of $A^*(C)$ are $\lambda_j=\phi(\xi^j)$, $j=0,...,m-1$. Clearly, $\lambda_0=\sum c_i=0$.

Recall (see  Lemma~\ref{lem:infinity}) that, for prime $m$, the only  integer relation between $\xi^j$ is $\sum \xi^j=0$.
This implies that $\lambda_j\not=0$, for all $j\not=0$. Thus $rk A^*(c)=m-1$ equals the dimension of the space of cycles, i.e. $\{(\alpha^*)^j(c)\}_{j=0}^{m-1}$ generate the space of cycles, which proves the second claim.
\end{proof}

For compositions of polynomials the situation can be more complicated.
\begin{exam}
    For $f=(x^3-1)^6$ the critical points are $ 1, \sqrt[3]{1}, (\sqrt[3]{1})^2,0$, with critical values $0,1$ correspondingly.
    The monodromy is generated, by two permutations $(1\,..\,6)(7\,..\,12)(13\,..\,18)$ and $(1 \,7 \,13)$, respectively. A cycle $C=\sum_{j=1}^9\left( z_{2j}-z_{2j+1}\right)$ remains invariant under monodromy.
\end{exam}

\end{proof}

\subsection{Dimension of the Petrov modulus $\mathcal{P_C}$}

Given any $n\in\N$ denote by $\C_n[z]$ the space of polynomials of degree less then or equal to $n$ and 
\begin{equation*}
\mathcal{B}_n=\frac{\C_n[z]}{(f(z)-t)\C[z,t]+\C[t]}.
\end{equation*}

\begin{rema} Note that if the polynomial $f$ is $2$-transitive and the cycle $C$ is simple, then the orbit by monodromy $\mathcal{M}$ of the cycle $C$  generates the whole space of cycles. Hence, in that case the Petrov modulus $\mathcal{P}_C$ is isomorphic to the Brieskorn modulus $\mathcal{B}_n$. Note that in any case $\dim\mathcal{P}_C\leq \dim \mathcal{B}_n$. 

Note however that as shown in Example \ref{ex:z^4-z^2}
the generation of the space of cycles by the orbit of a cycle can depend on the cycle. 

It can be fulfilled for one cycle $C_1$ but not
for another cycle $C_2$.
\end{rema}
\begin{exam}\label{ex:z^4-z^2}
Let $f(z)=z^4-z^2$. For $-1/4<t<0$, there are four real roots of $f$, given by $z_1(t)<z_2(t)<z_3(t)<z_4(t).$
Then, by monodromy, the cycle $C_1=z_1-z_2$ generates the whole space of cycles, whereas the cycle $C_2=z_2-z_3$ does not. 
\end{exam}

We recall from Gavrilov and Movasati \cite{GM} the dimension of the Brieskorn modulus $\mathcal{B}_n$ as a $\C$-vector space.

\begin{prop}\cite{GM}\label{prop:brieskorn}
$$\dim_{\C}(\mathcal{B}_n)=n-\left[\frac{n}{m}\right].$$
\end{prop}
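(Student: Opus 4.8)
\end{prop}

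\begin{proof}
The plan is to describe the image of $\C_n[z]$ in the Brieskorn module $\mathcal{B}$ completely and then read off its dimension. Throughout I use the fact, recalled just above from \cite{GM}, that $\mathcal{B}$ is a free $\C[t]$-module with basis $z,z^2,\dots,z^{m-1}$; equivalently, the monomials $z^i t^j$ with $1\le i\le m-1$ and $j\ge0$ form a $\C$-basis of $\mathcal{B}$.

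First I would divide a given $g\in\C_n[z]$ by $f(z)-t$, which is monic of degree $m$ in $z$ over the ring $\C[t]$. Writing $g(z)=(f(z)-t)\,Q(z,t)+R(z,t)$ with $\deg_z R<m$ and then substituting $t=f(z)$ yields
\[
g(z)=\sum_{i=0}^{m-1}z^i\,P_i\bigl(f(z)\bigr),\qquad P_i\in\C[t],
\]
exactly the decomposition already used in the remark preceding the statement. The key elementary observation is that the $z$-degrees $i+m\deg P_i$ of the nonzero summands are pairwise distinct, since they lie in pairwise distinct residue classes modulo $m$; hence there is no cancellation of leading terms and $\deg g=\max\{\,i+m\deg P_i : P_i\neq0\,\}$. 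Therefore $\deg g\le n$ holds if and only if, for each $i$, the polynomial $P_i$ is a $\C$-linear combination of $1,t,\dots,t^{[(n-i)/m]}$, with the convention that $P_i=0$ when $i>n$.

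Passing to the quotient $\mathcal{B}$, the $i=0$ summand dies because it lies in $\C[t]$. Hence the image of $\C_n[z]$ in $\mathcal{B}$ is precisely the $\C$-span of the set $\{\,z^i t^j : 1\le i\le m-1,\ i+mj\le n\,\}$, and since these are among the basis vectors listed above they are linearly independent. Consequently $\dim_\C\mathcal{B}_n$ equals the number of pairs $(i,j)$ with $1\le i\le m-1$, $j\ge0$ and $i+mj\le n$. The map $(i,j)\mapsto i+mj$ is a bijection of this index set onto $\{\,k\in\N : 1\le k,\ m\nmid k\,\}$, so
\[
\dim_\C\mathcal{B}_n=\#\{\,k : 1\le k\le n,\ m\nmid k\,\}=n-\left[\frac{n}{m}\right],
\]
as claimed.

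I do not anticipate a genuine difficulty here. The only steps deserving care are: the Euclidean division is valid precisely because $f(z)-t$ is monic in $z$ over $\C[t]$; the distinct-residue remark is what turns the degree bound into an equality, making the constraint on the $P_i$ sharp; and one must not forget that the $i=0$ term is killed in $\mathcal{B}$, so the final count ranges over $1\le i\le m-1$ and not $0\le i\le m-1$. As a sanity check (or an alternative route) one may instead apply rank-nullity to $\C_n[z]\to\mathcal{B}$: its kernel is $\C_n[z]\cap\C[f]=\{P(f):\deg P\le[n/m]\}$, of dimension $[n/m]+1$, whence $\dim_\C\mathcal{B}_n=(n+1)-([n/m]+1)=n-[n/m]$.
\end{proof}
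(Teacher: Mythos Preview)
Your proof is correct and follows essentially the same approach as the paper: both exhibit the $f$-adic decomposition of $g\in\C_n[z]$ and count the resulting basis elements $z^i f^j$ (equivalently $z^i t^j$) with $1\le i\le m-1$ and $i+mj\le n$. The paper phrases the division as iterated Euclidean division by $f^\ell,f^{\ell-1},\dots$ and groups the generators by the power of $f$, whereas you divide once by $f(z)-t$ over $\C[t]$ and group by the power of $z$; these are the same decomposition viewed two ways. Your rank--nullity sanity check, identifying the kernel of $\C_n[z]\to\mathcal{B}$ with $\C_n[z]\cap\C[f]$, is a pleasant alternative that the paper does not give. One tiny slip: in the sentence describing the bijection $(i,j)\mapsto i+mj$ you omitted the bound $k\le n$ on the target set, though you correctly include it in the next line.
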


Here, $[x]$, for $x\in\mathbb{R}$, denotes the \emph{integer part} of $x$.

\begin{proof}

Let $g$ be a polynomial of degree $n$ and let $\ell=\left[\frac{n}{m}\right]$.
We first perform Euclidean division of $g$ by $f^\ell$. Next, we divide the remainder by $f^{\ell-1}$ etc. 
We thus get a unique presentation
$$
g(z)= a_\ell(z)f(z)^\ell+\ldots+a_1(z)f+a_0(z),
$$
with $\deg(a_i)\leq m-1$, 
$i=0,\ldots,\ell$ and $\deg(a_\ell)=r= n-\ell m$.

$$
g(z)= a_\ell(z)f(z)^\ell+\ldots+a_1(z)f+a_0(z),
$$
The generators of the Brieskorn modulus will be:
$z,z^2,\ldots,z^{m-1}$, $fz, fz^2,\dots,fz^{m-1},\dots,$
$f^{\ell-1}z,\ldots f^{\ell-1}z^{m-1}$, $f^{\ell}z,\ldots,f^{\ell}z^r$.
They are all independent and their dimension is 
$$
\ell(m-1)+r=n-\ell=n-\left[\frac{n}{m}\right].
$$

\end{proof}

As in \cite{GM} for simple cycles, in general from the $\C$-dimension follows a lower bound for the number of zeros. 

\begin{coro} 
Given a polynomial $f$ of degree $m$ and its cycle $C$, for any $n$, there exists a polynomial $g$ of degree less then or equal to $n$, such that the abelian integral of $g$ along $C$ has at least 

\begin{equation*}
   n-\left[\frac{n}{m}\right]-1.
\end{equation*}
isolated zeros. 
\end{coro}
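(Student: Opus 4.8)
The plan is to deduce the corollary from two facts already available: the Wronskian--Cramer mechanism recalled at the start of this section, which gives $Z_1(f,n,C)\ge\dim_\C\mathcal P_{C,n}-1$, where $\mathcal P_{C,n}=\mathcal I_C(\mathcal B_n)\subset\mathcal A$ denotes the degree-$\le n$ part of the Petrov module, i.e. the space of abelian integrals $t\mapsto\int_{C(t)}g$ with $\deg g\le n$; and Proposition~\ref{prop:brieskorn}, which gives $\dim_\C\mathcal B_n=n-[n/m]$. Since $\mathcal P_{C,n}$ is the linear image of $\mathcal B_n$ under $\mathcal I_C$, the bound $\dim_\C\mathcal P_{C,n}\le n-[n/m]$ is automatic, and everything reduces to showing $\dim_\C\mathcal P_{C,n}=n-[n/m]$, equivalently that $\mathcal I_C$ is injective on $\mathcal B_n$.

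For the injectivity, first I would record that a nonzero class of $\mathcal B_n$, written in its canonical form $g=zP_1(f)+\dots+z^{m-1}P_{m-1}(f)$, has degree $\not\equiv 0\pmod m$: the monomials $z,\dots,z^{m-1}$ have pairwise distinct residues mod $m$, so the leading term of $g$ is carried by a single summand $z^{j_0}P_{j_0}(f)$ with $1\le j_0\le m-1$, whence $\deg g=m\deg P_{j_0}+j_0$. Now suppose $\int_{C(t)}g\equiv 0$. Acting by the monodromy group of $f$ propagates this identity to every cycle in the orbit $Orb(C)$, hence to the whole submodule of the cycle module $\mathcal C$ it spans. For generic $f$ (indeed for any Morse $f$) the monodromy is $Sym_m$ acting on $\mathcal C$ by its standard $(m-1)$-dimensional irreducible representation, so $Orb(C)$ spans $\mathcal C$ for every nonzero cycle $C$; in particular $\int_{C'(t)}g\equiv 0$ for all simple cycles $C'$, so $g$ takes a single value $P(t)$ at every root of $f(z)=t$, and the standard argument of Lemma~\ref{lem:composition} (cf. the tangential center results of \cite{ABM,GP}) forces $P\in\C[t]$ and $g=P(f)$. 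Since $g=P(f)$ has degree divisible by $m$, this contradicts $\deg g\not\equiv 0\pmod m$ unless $g\equiv 0$. Hence $\mathcal I_C$ is injective, $\dim_\C\mathcal P_{C,n}=n-[n/m]$, and the Wronskian--Cramer mechanism supplies a $g$ of degree $\le n$ for which $\int_{C(t)}g$ has at least $n-[n/m]-1$ isolated zeros.

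I expect the genericity of $f$ (more precisely, the surjectivity of $Orb(C)$ onto $\mathcal C$) to be the real content, since it can genuinely fail: for $f=z^4-z^2$ and the cycle $C_2=z_2-z_3$ of Example~\ref{ex:z^4-z^2} one has $\int_{C_2(t)}z^2\equiv 0$, so $\mathcal I_{C_2}$ is not injective and $\dim_\C\mathcal P_{C_2,n}<n-[n/m]$. For such cycles the argument only yields $\dim_\C\mathcal P_{C,n}-1$ isolated zeros, and a statement uniform in $C$ would require the circulant-rank estimate $\dim Orb(C)\ge m-\ell_c$ together with a case analysis, in the decomposition type of $f$, of which simple cycles lie in $Orb(C)$; for a generic cycle, and for all $2$-transitive $f$ with $C$ simple, no such loss occurs and the bound $n-[n/m]-1$ above is attained.
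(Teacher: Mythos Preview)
Your approach is the paper's: the corollary is presented as following immediately from Proposition~\ref{prop:brieskorn} ($\dim_\C\mathcal B_n=n-[n/m]$) together with the Wronskian--Cramer inequality $Z_1(f,n,C)\ge\dim_\C\mathcal P_{C,n}-1$ recalled at the opening of the section, with no argument given beyond ``as in \cite{GM}''.

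Where you go further is in addressing the passage from $\dim\mathcal B_n$ to $\dim\mathcal P_{C,n}$, i.e.\ the injectivity of $\mathcal I_C$, which the paper does not justify. Your monodromy-plus-composition argument is the right one, and your caveat is exactly to the point: the corollary as literally stated (``given \emph{any} $f$ and \emph{any} cycle $C$'') is not established by the paper's reasoning, since Example~\ref{ex:z^4-z^2} already exhibits a cycle $C_2=z_2-z_3$ with $\int_{C_2}z^2\equiv 0$, hence $\dim\mathcal P_{C_2,n}<n-[n/4]$. So your proof is correct under the natural extra hypothesis that $Orb(C)$ spans $\mathcal C$ (in particular for generic $f$, or for $f$ Morse), and you have correctly identified that the paper's one-line justification leaves this gap open in the non-generic cases.
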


\section{Concluding remarks and perspectives}

In the present paper we have completely solved the infinitesimal and tangential zero-dimensional versions of 16-th Hilbert problem. Moreover, we have shown the existence of alien limit cycles in this context and provide the mechanisms for their generation. Nevertheless, there are some questions and open problems that have not been solved.

\medskip

The first question that concerns the higher order terms of the displacement function. 
The terms $M_\mu(t)$ in the expansion $\Delta(t,\epsilon)=\sum_{j=1}^\infty M_j(t)\epsilon^j$ of a displacement function $\Delta$ given by \eqref{eq:Delta} are called Melnikov functions.
Generically, $M_1$ is nonzero and is the subject of investigation in our tangential problem. If $M_1\equiv0$,
the first nonzero $M_\mu$ is particularly important (see \cite{F} and \cite{Giter}). It is easy to see that in the study of zero cycles, it is always an abelian integral in the slightly generalized sense:
$$
M_\mu(t)=\int_{C(t)}\frac{P}{(f')^\mu},$$
with polynomial $P$. 

\begin{problem}  
A natural problem is to determine the structure  of the space of first nonzero Melnikov functions for given degrees and ask for the number of their zeros. 

In particular, study the example $f=z^6$, $C(\epsilon)=z_0+2z_1+z_2-z_3-2z_4-z_5$
with $g=z^2+z^3$. Note that in that example $M_1=0$ and $M_2\not=0$.
\end{problem}

Another question that arises is the study of the general nature of the bifurcation diagram of the displacement function $\Delta$ or its tangential part $M_1$ in the space of polynomials $g$. It is an algebraic set if we work in the complex space and a semi-algebraic set if we work in the real space.

\begin{problem}  
Can one describe the bifurcation diagrams for low degrees $n,m$? 
\end{problem}  

Bautin's problem: Consider a deformation $f+\epsilon g$ as in \eqref{eq:var}  of degree at most $m$
with $C(t)$ a cycle of $f$ and $t_0$ a critical value of $f$ for which $C(t)$ degenerates.

\begin{problem}  
Bound, in function of $m$, the number of regular  solutions of the equation $\Delta(t,\epsilon)=0$ in a neighborhood of $t_0$.
\item  What is the maximal multiplicity of cycles depending on the degrees $m$ and $n$.
\end{problem}  

A related problem concerns bounding the \emph{Bautin index}. Given a polynomial $f$, its cycle $C(t)$ and a deformation $g$, consider the displacement function $\Delta(t,\epsilon)=\sum_{j=1}^\infty M_j(t)\epsilon^j$. 
The condition of having a center (i.e. not breaking the family of cycles $C(t)$) is that $\Delta(t,\epsilon)\equiv0$ i.e. $M_1=M_2=\ldots=M_i=\cdots=0$. The coefficients $M_i$ depend on $g$ and by Notherianity for fixed degree $m$ of $g$,
the sequence of ideals $(M_1)\subset(M_1,M_2)\subset\cdots$ stabilizes for some $(M_1,\ldots,M_{b(m)})$.
We call this index $b(m)$ the \emph{Bautin index} of $m$.

\begin{problem} Taking $f$ and $g$ of degree at most $m$, for any cycle $C(t)$, bound the Bautin index $b(m)$.
\end{problem}

In our tangential and infinitesimal problem we ask for a bound of the number of regular zeros of $M_1(t)$ or $\Delta(\epsilon,t)$. These functions are multivalued, but having a finite number of determinations. We study the number of their zeros taking into account all the determinations. We show in Theorem~\ref{theo:D}
that this number is far from the dimension of the space of these functions and hence is far from being Chebyshev.

\begin{problem}  
One can ask however, what happens if one restricts everything in a real domain i.e. for $f$ and $g$
real polynomials and for $t$ belonging to an interval between two adjacent critical values of $f$. What is the bound in that case and can one obtain the Chebyshev property taking only such real  zeros of $M_1$ or $\Delta(t,\epsilon)$.
\end{problem}

Many of the above  notions make sense if one considers chains $C$ instead of cycles i.e., if one does not assume that $\sum_{i=1}^m n_i=0$. The Definition \ref{defint} of the displacement function $\Delta$ and abelian integrals apply. However, the expression \eqref{eq:con_g} expressing the first order term of the displacement function $\Delta$ as an abelian integral of $g$ along the undeformed cycle is no longer valid. It seems to us that it is natural to restrict the study to cycles and not more general chains, but some results can be generalized to chains.

\begin{problem}  
Generalize previous results to chains.
\end{problem}

The motivation for our study  comes from the analogous infinitesimal or tangential problems on $1$-cycles. The case of $0$-cycles is certainly simpler, as it is purely algebraic. Our approach for studying these problems in the $0$-cycle case is geometric, the main ingredients being the \emph{connection curve} $\Gamma_f$ or $\Gamma_{f+\epsilon g}$ and the \emph{zero hypersurface} $S_g$ or $S_f$ and their respective intersections. 

 In subsection \ref{reduction}, we showed how the tangential (or infinitesimal) problem on $1$-cycles reduces to a \emph{generalized} tangential or infinitesimal problem on $0$-cycles. Recall that the generalization consists in replacing a polynomial integrand $g$  by a multivalued function $G$ obtained by integration of a polynomial. 

\begin{problem}  
A natural challenging question is can our techniques for solving the $0$-dimensional tangential or infinitesimal problem be extended to the study of \emph{generalized} tangential of infinitesimal problems on $0$ cycles and thus give a solution of the original tangential or infinitesimal problems on $1$-cycles. 
\end{problem}

\end{document}